\tikzstyle{every node}=[circle, draw, fill=white,inner sep=0pt, minimum width=4pt]
\tikzstyle{nodelabel}=[rounded corners,fill=none,inner sep=5pt,draw=none]
\tikzstyle{matching} = [ultra thick]
\tikzset{snake/.style={decorate, decoration=snake}}
\begin{document}

\title{Generating Simple Near-Bipartite Bricks}
\author{Nishad Kothari\footnote{Partially supported by {\sc NSERC} grant (RGPIN-2014-04351, J. Cheriyan).} \and
Marcelo H. de Carvalho\footnote{Supported by {\sc Fundect-MS} and {\sc CNP}q.}}
\UKvardate
\date{14 January, 2020}
  \maketitle 
  \thispagestyle{empty}

\begin{abstract} 
A {\it brick} is a $3$-connected graph such that the graph obtained from it
by deleting any two distinct vertices has a perfect matching.
A brick $G$ is {\it near-bipartite} if it has a pair of edges $\alpha$ and $\beta$
such that $G-\{\alpha,\beta\}$ is bipartite and matching covered; examples
are $K_4$ and the triangular prism~$\overline{C_6}$.
The significance of near-bipartite bricks
arises from the theory of ear decompositions of {\mcg}s.

\smallskip
The object of this paper is to establish a generation procedure which is specific
to the class of simple near-bipartite bricks. In particular, we prove
that every simple near-bipartite brick $G$ has an edge~$e$ such that the graph
obtained from $G-e$ by contracting each edge that is incident with a vertex
of degree two is also a simple near-bipartite brick, unless $G$ belongs to any
of eight well-defined infinite families.
This  is a refinement of the brick generation theorem of Norine and Thomas
\cite{noth07}
which is appropriate for the restricted class of near-bipartite bricks.

\smallskip
Earlier, the first author proved a similar generation theorem for
(not necessarily simple) near-bipartite bricks \cite{koth16,koth19};
we deduce our main result from this theorem. Our proof is based on
the strategy of Carvalho, Lucchesi and Murty \cite{clm08} and uses
several of their techniques and results.
The results presented here also appear in the Ph.D. thesis of the first
author \cite{koth16}.
\end{abstract}

%\newpage
\tableofcontents

\bigskip
\bigskip
This paper is a sequel to a recent paper
of the first author \cite{koth19}. 
In the following section, we recall the most important definitions
and results from \cite{koth19}, and we explain the contributions
of this paper.
Please see \cite{koth19} for any missing definitions.
The reader, who is interested in gaining a deeper understanding,
should perhaps read the first two sections of \cite{koth19}.
\section{Brick Generation}
\label{sec:brick-generation}

A \mcg\ free of nontrivial tight cuts is called
a {\it brace} if it is bipartite; otherwise, it is called a {\it brick}.
Lov{\'a}sz~\cite{lova87} proved the remarkable result that any two tight cut
decompositions of a \mcg~$G$ yield the same list of bricks
and braces (except possibly for multiplicities of edges).
In particular, any two tight cut decompositions of~$G$ yields the same
number of bricks; this number is denoted by $b(G)$.

\smallskip
Edmonds, Lov{\'a}sz and Pulleyblank~\cite{elp82} established the following important
equivalence.
\begin{thm}
A graph~$G$ is a brick if and only if it is $3$-connected and bicritical.
\end{thm}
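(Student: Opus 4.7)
The forward direction follows directly from the discussion preceding the theorem: a brick, being free of nontrivial tight cuts, can have no nontrivial barrier (else a barrier cut would be both nontrivial and tight) and no $2$-vertex cut leaving an even component (else a $2$-separation cut would be nontrivial and tight). A short supplementary observation rules out a $2$-vertex cut leaving only odd components: in a matching covered graph of even order such a $2$-cut would leave exactly two odd components (more being impossible by Tutte's theorem), making the $2$-cut itself a nontrivial barrier, again impossible. Thus every brick is $3$-connected and bicritical.

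For the converse, suppose $G$ is $3$-connected and bicritical. The graph $G$ is matching covered: for any edge $uv$, bicriticality supplies a perfect matching $M$ of $G-\{u,v\}$, and $M \cup \{uv\}$ is then a perfect matching of $G$ containing $uv$. The graph $G$ is nonbipartite: in a bipartition into parts $A$ and $B$, a perfect matching forces $|A|=|B|$ and $3$-connectedness forces $|A|,|B| \geq 2$; taking $u,v \in A$ then gives $G-\{u,v\}$ with shores of unequal size and no perfect matching, contradicting bicriticality.

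The remaining task, showing that $G$ has no nontrivial tight cut, is the substance of the theorem. The plan is to establish the following tight-cut lemma: \emph{a matching covered graph $H$ with a nontrivial tight cut possesses either a nontrivial barrier or a $2$-vertex cut $\{u,v\}$ with $H-\{u,v\}$ having an even component.} Granted this lemma, a nontrivial tight cut of $G$ would force either a nontrivial barrier (contradicting bicriticality) or a $2$-vertex cut (contradicting $3$-connectedness), so no such tight cut exists and $G$ is a brick.

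To prove the lemma I would take a nontrivial tight cut $\partial(X)$ of $H$, note that tightness forces $|X|$ and $|\overline{X}|$ to be odd and hence at least three, and pass to the matching covered contractions $H_1 := H/\overline{X}$ and $H_2 := H/X$ with contraction vertices $\overline{x}$ and $x$. Since $H_1-\overline{x}$ is isomorphic to $H[X]$ and has odd order, $\{\overline{x}\}$ is a barrier of $H_1$; let $S$ be the maximal barrier of $H_1$ containing $\overline{x}$, and analogously let $T$ be the maximal barrier of $H_2$ containing $x$. If either $S$ or $T$ is nontrivial, the odd components of $H_1-S$ and of $H_2-T$ should piece together through the edges of $\partial(X)$ so that $(S-\overline{x}) \cup (T-x)$ lifts to a nontrivial barrier of $H$. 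If both $S$ and $T$ are singletons, I would exploit the structure of the unique odd component of $H_1-\overline{x}$ together with the admissibility of the cut edges of $\partial(X)$ to locate a $2$-vertex cut of $H$ with an even component. The main obstacle I anticipate is the pasting step in the nontrivial barrier case: one must verify, using tightness of $\partial(X)$ applied to carefully chosen perfect matchings, that no odd component on the $X$ side merges with an odd component on the $\overline{X}$ side into an even piece, so that the count of odd components of $H - ((S-\overline{x}) \cup (T-x))$ reaches $|S|+|T|-2$.
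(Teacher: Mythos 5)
The paper does not actually prove this theorem: the forward implication is disposed of in the two sentences preceding the statement (a nontrivial barrier yields a nontrivial barrier cut, a $2$-vertex cut with an even component yields a nontrivial $2$-separation cut), and the converse is cited to Edmonds, Lov\'asz and Pulleyblank \cite{elp82}. Your first paragraph reproduces the paper's forward argument correctly, including the small supplementary point that a $2$-cut with only odd components is itself a nontrivial barrier.

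The converse, however, is the entire substance of the theorem, and your proposal does not prove it. You correctly reduce it to the right key lemma (a nontrivial tight cut forces a nontrivial barrier or a $2$-separation), but both branches of your proof of that lemma are left as unverified claims, and the first branch as described has a concrete obstruction. You propose that when the maximal barrier $S\ni\overline{x}$ of $H_1$ is nontrivial, the set $(S-\overline{x})\cup(T-x)$ should be a nontrivial barrier of $H$ because the odd components of $H_1-S$ and $H_2-T$ ``piece together.'' But the odd components of $H_1-S$ are components of $H[X-(S-\overline{x})]$, and when you delete $S-\overline{x}$ from $H$ instead of from $H_1$, every such component that sends an edge into $\overline{X}$ merges with part of $\overline{X}$ and potentially with other components on both sides; moreover $H[\overline{X}]$ need not be connected, so the component count of $H-((S-\overline{x})\cup(T-x))$ is not controlled by the counts in the two contractions. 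Establishing that the right number of odd components survives is exactly where the difficulty of the theorem lives; the original argument of \cite{elp82} goes through the perfect matching polytope, and the known combinatorial proofs require machinery well beyond this pasting heuristic. The second branch (both maximal barriers trivial, locate a $2$-vertex cut) is stated only as an intention. As it stands, the hard direction is a plan with two acknowledged holes rather than a proof.
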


An edge~$e$ of a \mcg\ is {\it removable} if $G-e$ is also matching covered;
furthermore, it is {\it \binv} if $b(G-e)=b(G)$.
A \binv\ edge~$e$ of a brick~$G$ is {\it thin}
if the retract of $G-e$ is also a brick.
(The retract is the graph obtained by contracting each edge that is incident
with a vertex of degree two.) Carvalho, Lucchesi and Murty~\cite{clm06}
established the following.
\begin{thm}
{\sc [Thin Edge Theorem]}
\label{thm:clm-thin-bricks}
Every brick distinct from $K_4$, $\overline{C_6}$ and the Petersen graph
has a thin edge.
\end{thm}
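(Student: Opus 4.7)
\smallskip
The plan is to deduce this from the weaker statement that every brick outside $\{K_4, \overline{C_6}, \text{Petersen}\}$ has a $b$-invariant edge (the Carvalho--Lucchesi--Murty result \cite{clm02a} quoted just above), and to refine the choice of such an edge so that the retract of $G-e$ is in fact a brick.

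\smallskip
First I would isolate exactly what it means for a $b$-invariant edge to fail to be thin. Since $e$ is $b$-invariant, $b(G-e)=1$, and the graph $G-e$ has at most two degree-two vertices, namely endpoints of $e$ that had degree three in $G$. Each such vertex $v$ yields a barrier cut with shore $\{v\}\cup N(v)$, and bicontracting $v$ precisely resolves this tight cut. Therefore the retract $J$ of $G-e$ satisfies $b(J)=1$, and $J$ is a brick if and only if the only nontrivial tight cuts of $G-e$ are these ``small'' barrier cuts at degree-two vertices. Consequently, if $e$ is $b$-invariant but not thin, then $G-e$ admits some other nontrivial tight cut $\partial(X)$; by the discussion in the excerpt such a $\partial(X)$ is a barrier cut associated with a barrier of size at least two, or a $2$-separation cut, or a tight cut of some more general type.

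\smallskip
The second, and main, step would be an extremal argument. Among all $b$-invariant edges of $G$, pick one that minimizes a suitable complexity measure of $G-e$: for concreteness, the sum of the sizes of the maximal nontrivial barriers of $G-e$, tie-broken by the number of non-trivial $2$-vertex-cuts not accounted for by such barriers. Suppose for contradiction that this optimal $e$ is not thin, and fix an offending nontrivial tight cut $\partial(X)$ of the types above. Using the structure of $\partial(X)$, together with Tutte's Theorem and Theorem~\ref{thm:elp-bricks}, I would exhibit a replacement edge $e'$, typically lying in $\partial(X)$ or incident with the offending barrier or $2$-cut, such that (a)~$e'$ is again $b$-invariant, and (b)~$G-e'$ has strictly smaller measure than $G-e$. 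This contradicts the choice of $e$ and forces $e$ to be thin.

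\smallskip
I expect the hard part to be verifying property~(a): deleting a different edge can in principle destroy matching coverage or split $G$ into several bricks, so one has to show that the maximal barriers of $G-e'$ are controlled by those of $G-e$ (plus the local structure of $\partial(X)$). This requires a case analysis according to whether the endpoints of $e$ lie inside, outside, or straddle the shore $X$, and how the incident barriers interact with $X$; it is essentially here that techniques such as the Three Case Lemma and results on the propagation of barriers across tight cuts from \cite{clm02a,clm05} would be needed. The three exceptional graphs $K_4$, $\overline{C_6}$, and the Petersen graph enter only as base exclusions: they are already excluded by the $b$-invariant edge theorem that supplies the starting edge, and the extremal reduction never leaves the class of bricks with at least one $b$-invariant edge.
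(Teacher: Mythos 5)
This theorem is not proved in the paper: it is quoted from Carvalho, Lucchesi and Murty \cite{clm06}, so there is no in-paper argument to compare yours against. Your outline does follow the strategy that is actually used in the literature (and that this paper adapts to the near-bipartite setting in Theorem~\ref{thm:rank-plus-index}): start from a $b$-invariant edge supplied by \cite{clm02a}, and run an extremal argument in which a non-thin $b$-invariant edge is traded for a ``better'' one with respect to a bounded measure.

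As written, though, the proposal has a genuine gap rather than being a proof: the entire technical content of the theorem sits inside your sentence ``I would exhibit a replacement edge $e'$ such that (a) $e'$ is again $b$-invariant and (b) $G-e'$ has strictly smaller measure,'' and that step is asserted, not carried out. Establishing (a) and (b) is precisely the hard part --- it is the general-brick analogue of Theorem~\ref{thm:rank-plus-index} and occupies the bulk of \cite{clm06} --- and you concede this yourself. Two further cautions. First, the measure used in the actual argument is ${\sf rank}(e)+{\sf index}(e)$, \emph{maximized}; since ${\sf rank}(e)$ equals $|V(G)|$ minus twice the total excess of the maximal nontrivial barriers of $G-e$, your proposal to minimize total barrier size is close to maximizing the rank, but it treats the index differently and your tie-breaking rule on $2$-vertex-cuts is ad hoc --- it is not evident that it supports the induction. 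Second, your equivalence ``$J$ is a brick iff the only nontrivial tight cuts of $G-e$ are the small barrier cuts at degree-two vertices'' needs justification in both directions (a nontrivial tight cut of $G-e$ other than the small ones must be shown to survive into the retract, and conversely); this is true but relies on the barrier/tight-cut propagation results you only gesture at. So the plan is sound in outline and consistent with the known proof, but its core lemma is missing.
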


As a consequence, every brick may be constructed from one of $K_4$,
$\overline{C_6}$ and the Petersen graph using four elementary
``expansion operations'' that are described in their paper~\cite{clm06}.

\smallskip
In order to establish a recursive procedure for generating simple bricks,
one needs the notion of a strictly thin edge.
A thin edge~$e$ of a simple brick~$G$ is {\it strictly thin} if the retract
of~$G-e$ is simple.
For each brick shown in Figure~\ref{fig:NT-bricks},
its thin edges are indicated by bold lines;
it is easily seen that none of these edges is strictly thin.

\smallskip
As an example, consider the Tricorn, shown in Figure~\ref{fig:Tricorn}, which has precisely three
thin edges indicated by bold lines;
deleting one of them, say~$e$, and taking the retract
yields the simple odd wheel~$W_5$. Thus each thin edge of the Tricorn
is strictly thin.

\begin{figure}[!ht]
%\medskip
\centering
\begin{tikzpicture}[scale=0.7]
\draw (90:3.2)node[nodelabel]{$e$};
\draw[ultra thick] (70:3) -- (110:3);
\draw[ultra thick] (190:3) -- (230:3);
\draw[ultra thick] (310:3) -- (350:3);
\draw (110:3) -- (190:3);
\draw (230:3) -- (310:3);
\draw (350:3) -- (70:3);
\draw (0:0) -- (90:1.5);
\draw (0:0) -- (210:1.5);
\draw (0:0) node{} -- (330:1.5);
\draw (90:1.5) -- (70:3)node{};
\draw (110:3)node{} -- (90:1.5)node{};
\draw (210:1.5) -- (190:3)node{};
\draw (230:3)node{} -- (210:1.5)node{};
\draw (330:1.5) -- (310:3)node{};
\draw (350:3)node{} -- (330:1.5)node{};
\draw (90:-3) node[nodelabel]{Tricorn};
\end{tikzpicture}
\hspace*{0.9in}
\vspace*{-0.2in}
\begin{tikzpicture}[scale=0.7]
\draw (210:1.5) to [out=90,in=210] (90:1.5);
\draw (330:1.5) to [out=90,in=330] (90:1.5);
\draw (230:3) to [out=90,in=180] (90:1.5);
\draw (310:3) to [out=90,in=0] (90:1.5);
\draw (230:3) -- (310:3);
\draw (0:0) -- (90:1.5);
\draw (0:0) -- (210:1.5);
\draw (0:0) node{} -- (330:1.5);
\draw (210:1.5)node{} -- (230:3)node{};
\draw (330:1.5)node{} -- (310:3)node{};
\draw (90:1.5)node{};
\draw (90:-3.35) node[nodelabel]{$W_5$};
\end{tikzpicture}
\caption{Thin edges of the Tricorn}
\label{fig:Tricorn}
%\bigskip
\end{figure}

Next, we describe infinite
families of bricks that do not contain any strictly thin edges. Norine and Thomas \cite{noth07} proved that
these families, together with the Petersen graph, include all the bricks that are free of strictly thin edges; for
this reason, we refer to these families as {\it Norine-Thomas families}, and we refer to their members as {\it Norine-Thomas bricks}.

\subsection{Norine-Thomas families}
\label{sec:NT-bricks}

\bigskip
\noindent {\sc Odd Wheels.}
The {\it odd wheel} $W_{2k+1}$, for $k \ge 1$, is defined to be the join of
an odd cycle $C_{2k+1}$ and $K_1$.
See Figure~\ref{fig:NT-bricks}a. The smallest odd wheel
is $K_4$. If $k \ge 2$, then~$W_{2k+1}$ has exactly one vertex of degree
$2k+1$, called its {\it hub}, and the edges incident at the hub
are called its {\it spokes}. The remaining $2k+1$ vertices lie on a cycle, called
the {\it rim}, and they are referred to as {\it rim vertices}.

\bigskip
Each member of the remaining four families contains a bipartite matching covered subgraph
which is either a `ladder' or a `partial biwheel'. These bipartite graphs are
also the main building blocks of additional families of bricks which are of interest
in Section~\ref{sec:new-families}. For this reason, we start with a description
of these two families of bipartite graphs.

\bigskip
\noindent
{\sc Ladders.}
Let $x_0x_1 \dots x_j$ and $y_0y_1 \dots y_j$ be two
vertex-disjoint paths, where $j \geq 2$.
The graph~$K$ obtained by the union of these two paths, and by adding
edges $x_iy_i$ for $0 \leq i \leq j$, is called a {\it ladder}, and its edges
joining $x_i$ and $y_i$ are referred to as its {\it rungs}.
See Figure~\ref{fig:biwheels-ladders}.
The two rungs
$x_0y_0$ and $x_jy_j$ are {\it external}, and the remaining rungs are {\it internal}.
We say that $K$ is {\it odd} ({\it even}) if it has an odd (even) number of rungs.

\bigskip
\noindent
{\sc Partial Biwheels.}
Let $x_0x_1 \dots x_{2j+1}$ be an odd path, where $j \geq 1$.
The graph~$K$ obtained by adding two new vertices $u$ and $w$,
joining $u$ to vertices in $\{x_0, x_2, \dots, x_{2j}\}$, and joining
$w$ to vertices in $\{x_1, x_3, \dots, x_{2j+1}\}$, is called a
{\it partial biwheel}; the vertices $x_0$~and~$x_{2j+1}$ are referred to
as its {\it ends},
whereas $u$~and~$w$ are referred to as its {\it hubs}; and an edge
incident with a hub is called a {\it spoke}.
See Figure~\ref{fig:biwheels-ladders}.
The two spokes $ux_0$~and~$wx_{2j+1}$ are {\it external},
and the remaining spokes are {\it internal}.

\begin{figure}[!ht]
\medskip
\centering
\begin{tikzpicture}[scale=0.7]
\draw (2,0) -- (5,0);

\draw (2,0) -- (3.5,2);
\draw (3,0) -- (3.5,-2);
\draw (4,0) -- (3.5,2);
\draw (5,0) -- (3.5,-2);

\draw (2,0) node{}node[left,nodelabel]{$a$};
\draw (3,0) node[fill=black]{};
\draw (4,0) node{};
\draw (5,0) node[fill=black]{}node[right,nodelabel]{$b$};
\draw (3.5,2) node[fill=black]{}node[above,nodelabel]{$u$};
\draw (3.5,-2) node{}node[below,nodelabel]{$w$};
\end{tikzpicture}
\hspace*{0.3in}
\begin{tikzpicture}[scale=0.7]
\draw (1,0) -- (6,0);

\draw (1,0) -- (3.5,2);
\draw (2,0) -- (3.5,-2);
\draw (3,0) -- (3.5,2);
\draw (4,0) -- (3.5,-2);
\draw (5,0) -- (3.5,2);
\draw (6,0) -- (3.5,-2);

\draw (1,0) node{}node[left,nodelabel]{$a$};
\draw (2,0) node[fill=black]{};
\draw (3,0) node{};
\draw (4,0) node[fill=black]{};
\draw (5,0) node{};
\draw (6,0) node[fill=black]{}node[right,nodelabel]{$b$};
\draw (3.5,2) node[fill=black]{}node[above,nodelabel]{$u$};
\draw (3.5,-2) node{}node[below,nodelabel]{$w$};
\end{tikzpicture}

\vspace*{0.1in}
\begin{tikzpicture}[scale=0.8]

\draw (1,0) -- (1,2);
\draw (2,0) -- (2,2);
\draw (3,0) -- (3,2);

\draw (1,0) -- (3,0);
\draw (1,2) -- (3,2);

\draw (1,0) node{}node[left,nodelabel]{$a$};
\draw (1,2) node[fill=black]{}node[left,nodelabel]{$u$};
\draw (2,2) node{};
\draw (2,0) node[fill=black]{};
\draw (3,0) node{}node[right,nodelabel]{$w$};
\draw (3,2) node[fill=black]{}node[right,nodelabel]{$b$};

\end{tikzpicture}
\hspace*{0.5in}
\begin{tikzpicture}[scale=0.8]

\draw (1,0) -- (1,2);
\draw (2,0) -- (2,2);
\draw (3,0) -- (3,2);
\draw (4,0) -- (4,2);

\draw (1,0) -- (4,0);
\draw (1,2) -- (4,2);

\draw (1,0) node{}node[left,nodelabel]{$a$};
\draw (1,2) node[fill=black]{}node[left,nodelabel]{$u$};
\draw (2,2) node{};
\draw (2,0) node[fill=black]{};
\draw (3,0) node{};
\draw (3,2) node[fill=black]{};
\draw (4,2) node{}node[right,nodelabel]{$w$};
\draw (4,0) node[fill=black]{}node[right,nodelabel]{$b$};

\end{tikzpicture}
\smallskip
\caption{Partial biwheels (top) and Ladders (bottom)}
\label{fig:biwheels-ladders}
\bigskip
\end{figure}
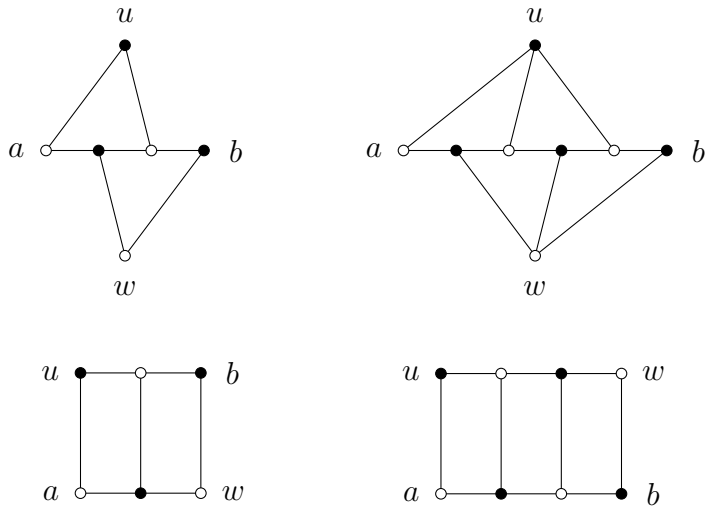

When referring to a ladder or to a partial biwheel, say~$K[A,B]$, with external
rungs/spokes $au$ and $bw$, we adopt the convention that
$a,w \in A$ and $b, u \in B$; furthermore, when $K$ is a partial biwheel,
$u$ and $w$ shall denote its hubs; as shown in Figure~\ref{fig:biwheels-ladders}.
(Sometimes, we may also use subscript
notation, such as $A_i$, $B_i$, $a_iu_i$ and $b_iw_i$ where $i$ is an integer, and this
convention extends naturally.)

\smallskip
It should be noted that a partial biwheel of order six is also a ladder. However,
a partial biwheel of order eight or more has only two vertices of degree two, namely, its
ends; whereas every ladder has four such vertices. We remark that, a {\it biwheel},
as defined by McCuaig \cite{mccu01}, has order
at least eight and contains an additional edge joining its ends; and
these constitute an important class of braces.

\bigskip
We now proceed to describe the remaining four Norine-Thomas families using
ladders and partial biwheels.

\bigskip
\noindent
{\sc Prisms, M{\"o}bius Ladders and Truncated Biwheels.}
Let $H[A,B]$ denote either a ladder or a partial biwheel of order~$n$,
with external rungs/spokes $au$ and $bw$,
and let $G$ be the graph obtained from~$H$ by adding two edges,
namely, $aw$ and $bu$. If $H$ is an odd ladder then $G$ is a {\it prism} and it
is denoted by~$P_n$, see Figure~\ref{fig:NT-bricks}b.
If $H$ is an even ladder then $G$ is a {\it M{\"o}bius ladder}
and it is denoted by~$M_n$, see Figure~\ref{fig:NT-bricks}f.
Finally, if $H$ is a partial biwheel then $G$ is a
{\it truncated biwheel} and it is denoted by~$T_n$, see Figure~\ref{fig:NT-bricks}c.
Note that $\overline{C_6}$ is the smallest prism as well as the smallest truncated
biwheel. For convenience, we shall consider $K_4$ to be the smallest M{\"o}bius ladder.

\begin{figure}[!ht]
\centering
%\bigskip
\bigskip
\begin{tikzpicture}[scale=0.5]
\foreach \x in {0, 1, ..., 6}
{
\draw[ultra thick] (0:0) -- (\x*51.43+90:2.5);
\draw (\x*51.43+90:2.5) -- (\x*51.43+90+51.43:2.5);
}
\foreach \x in {0, 1, ..., 6}
{
\draw (\x*51.43+90:2.5) node{};
}
\draw (0:0) node{};
\draw (90:-3.6) node[nodelabel]{(a)};
\end{tikzpicture}
\hspace{0.5in}
\begin{tikzpicture}[scale=0.5]
\foreach \x in {0, 1, ..., 4}
{
\draw (\x*72+90+72:1.3) -- (\x*72+90:1.3);
\draw[ultra thick] (\x*72+90:1.3) -- (\x*72+90:2.5);
\draw (\x*72+90:2.5) -- (\x*72+90+72:2.5);
}
\foreach \x in {0, 1, ..., 4}
{
\draw (\x*72+90:1.3) node{};
\draw (\x*72+90:2.5) node{};
}
\draw (90:-3.6) node[nodelabel]{(b)};
\end{tikzpicture}
\hspace{0.5in}
\begin{tikzpicture}[scale=0.5]
\draw (0,0) -- (6,0);
\draw (0,0) -- (3,2.5) -- (6,0);
\draw (0,0)node{} -- (3,-2.5) -- (6,0)node{};
\draw[ultra thick] (2.4,0) -- (3,2.5) -- (4.8,0);
\draw[ultra thick] (1.2,0) -- (3,-2.5) -- (3.6,0);
\draw (2.4,0)node{};
\draw (3,2.5)node{};
\draw (4.8,0)node{};
\draw (1.2,0)node{};
\draw (3,-2.5)node{};
\draw (3.6,0)node{};
\draw (3,-3.6) node[nodelabel]{(c)};
\end{tikzpicture}

\vspace*{-0.1in}
\begin{tikzpicture}[scale=0.5]
\draw (0,0) -- (1.5,1.5) -- (0,3) -- (0,0);
\draw (1.5,1.5)node{} -- (4.5,1.5)node{};
\draw (0,3)node{} -- (6, 3);
\draw (0,0)node{} -- (6,0);
\draw (6,0) -- (4.5,1.5)node{} -- (6,3)node{} -- (6,0)node{};
\draw[ultra thick] (3,1.5) -- (3,3);
\draw (3,1.5)node{};
\draw (3,3)node{};
\draw (3,-1.7) node[nodelabel]{(d)};
\end{tikzpicture}
\hspace{0.4in}
\begin{tikzpicture}[scale=0.5]
\draw (0,0) -- (1.5,1.5) -- (0,3) -- (0,0);
\draw (1.5,1.5)node{} -- (4.5,1.5)node{};
\draw (0,3)node{} -- (6, 3);
\draw (0,0)node{} -- (6,0);
\draw (6,0) -- (4.5,1.5)node{} -- (6,3)node{} -- (6,0)node{};
\draw[ultra thick] (2.5,1.5) -- (2.5,3);
\draw[ultra thick] (3.5,1.5) -- (3.5,3);
\draw (2.5,1.5)node{};
\draw (2.5,3)node{};
\draw (3.5,1.5)node{};
\draw (3.5,3)node{};
\draw (3,-1.7) node[nodelabel]{(e)};
\end{tikzpicture}
\hspace{0.3in}
\begin{tikzpicture}[scale=0.5]
\draw[ultra thick] (0,0) -- (0,3);
\draw[ultra thick] (2,0) -- (2,3);
\draw[ultra thick] (4,0) -- (4,3);
\draw[ultra thick] (6,0) -- (6,3);
\draw (0,0) to [out=120,in=210] (-0.5,4) to [out=30,in=150] (6,3);
\draw (6,0) to [out=60,in=330] (6.5,4) to [out=150,in=30] (0,3);
\draw (0,0)node{} -- (6,0)node{};
\draw (0,3)node{} -- (6,3)node{};
\draw (2,3)node{};
\draw (4,3)node{};
\draw (2,0)node{};
\draw (4,0)node{};
\draw (3,-1.7) node[nodelabel]{(f)};
\end{tikzpicture}
%\vspace*{-0.2in}
\caption{(a) Odd wheel $W_7$, (b) Prism $P_{10}$,
(c) Truncated biwheel~$T_8$, (d)~Staircase $St_8$,
(e) Staircase $St_{10}$, (f) M{\"o}bius ladder $M_8$}
\label{fig:NT-bricks}
%\bigskip
%\bigskip
%\bigskip
\end{figure}
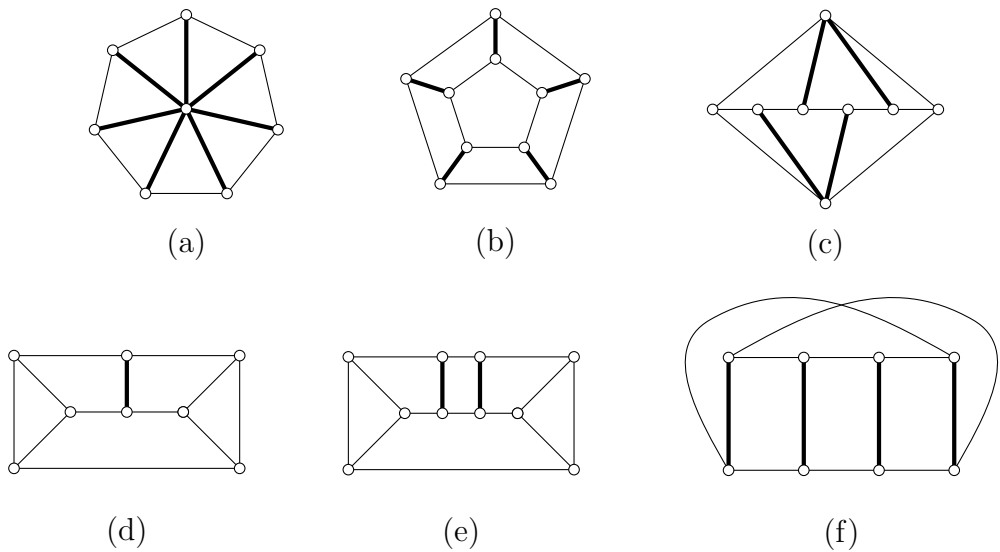

\bigskip
\noindent
{\sc Staircases.}
Let $K[A_1,B_1]$ denote a ladder of order $n$, with external rungs $a_1u_1$ and $b_1w_1$.
Then the graph~$G$ obtained from~$K$, by adding two new vertices
$a_2$ and $b_2$, and by adding five new edges $a_1a_2, u_1a_2, b_1b_2, w_1b_2$
and $a_2b_2$, is called a {\it staircase},
and it is denoted by $St_{n+2}$.
%We say that $G$ is an {\it odd} ({\it even}) staircase if $K$ is an odd (even) ladder.
See Figures~\ref{fig:NT-bricks}d and \ref{fig:NT-bricks}e.

\bigskip
Using this terminology,
the theorem of Norine and Thomas \cite{noth07} may be stated as follows.

\begin{thm}
{\sc [Strictly Thin Edge Theorem]}
\label{thm:nt-strictly-thin-bricks}
Let $G$ be a simple brick. If $G$ is free of strictly thin edges then $G$ is
either the Petersen graph, or it is an odd wheel, a prism,
a M{\"o}bius ladder, a truncated biwheel or a staircase.
\end{thm}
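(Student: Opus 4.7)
The plan is to reduce the Strictly Thin Edge Theorem to the Thin Edge Theorem (Theorem~\ref{thm:clm-thin-bricks}) by analyzing precisely when a thin edge fails to be strictly thin, and then propagating the resulting local structure around $G$. We may first dispose of the small cases: $K_4$ is the odd wheel $W_3$, and $\overline{C_6}$ is the smallest prism (and also the smallest M\"{o}bius ladder, under our convention that $K_4=M_4$), so we assume $G$ is distinct from $K_4$, $\overline{C_6}$, and the Petersen graph. Theorem~\ref{thm:clm-thin-bricks} then furnishes a thin edge $e$ of $G$.

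Let $J$ denote the retract of $G-e$; by hypothesis $J$ is a brick but not simple. Since $G$ is simple, multiplicity in $J$ can only be introduced by a bicontraction. Because $G$ is $3$-connected, every vertex of degree two in $G-e$ must be an end of $e$. Thus either (i)~$e=xy$ where $x$ has degree three in $G$ with neighbours $y,u,w$ and the edge $uw$ already exists in $G$, producing a parallel pair $uw$ after bicontracting $x$; or (ii)~both ends of $e$ have degree three in $G$ and bicontracting them yields parallel edges (subsumed by the ladder-type patterns of $St_8$ in Figure~\ref{fig:St_8}). In each subcase, one identifies a specific small subgraph of $G$ — a triangle at a degree-three vertex or a four-vertex ``rung'' configuration — attached to $G$ along a short interface. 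I would enumerate these patterns carefully and assign to each one a candidate family from Section~\ref{sec:NT-bricks}: a triangle at a cubic vertex whose apex sits in the rest of $G$ points toward an odd wheel or truncated biwheel, while a two-rung configuration points toward a prism, M\"{o}bius ladder, or staircase.

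The bulk of the argument is a propagation step. Knowing that $e$ is ``bad'' in one of the ways above tells us about $G$ locally; to pin down the global structure, I would apply Theorem~\ref{thm:clm-thin-bricks} again — either to $G$ after peeling off the identified substructure, or equivalently by choosing a new thin edge along the boundary of what has been exposed so far — and note that this new thin edge must also fail to be strictly thin. Iterating, one grows a maximal ladder or partial biwheel inside $G$, together with an attachment at each end. The key step is to show that, up to the configuration that arose at $e$, the attachments on the two sides are forced to match, because the brickness and bicriticality of $G$ leave no room for ``hybrids.'' For instance, once the growing substructure is a partial biwheel, the two hubs must eventually coincide (giving an odd wheel) or receive matched external spokes (giving a truncated biwheel); once it is a ladder, its ends must close up into a prism, a M\"{o}bius ladder, or the characteristic two-vertex cap of a staircase.

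The main obstacle is precisely this propagation-and-closure argument. One must verify, by a careful local analysis and by invoking $3$-connectedness together with Theorem~\ref{thm:elp-bricks}, that no mixed configurations occur and that the process cannot stall in an ``intermediate'' graph outside the five families; and one must separately confirm that the iteration actually terminates, which is where the Petersen graph enters as the unique sporadic obstruction to finding a strictly thin edge by this local-to-global method. Handling the smallest members of each family and the Petersen graph as base cases, and ensuring that the case analysis for each local pattern is exhaustive (in particular, that every way a bicontraction can create parallel edges has been considered), is where the bulk of the technical work lies.
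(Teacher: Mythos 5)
A point of orientation first: the paper does not prove Theorem~\ref{thm:nt-strictly-thin-bricks}; it quotes it from Norine and Thomas and notes that Carvalho, Lucchesi and Murty deduced it from the Thin Edge Theorem~(\ref{thm:clm-thin-bricks}). Your strategy --- take a thin edge, analyze why the retract acquires parallel edges, grow a maximal ladder or partial biwheel, and close up --- is exactly that deduction, and it is the strategy the paper executes in detail for its near-bipartite analogue. However, two steps of your sketch are wrong or missing as written. The first is your local analysis in case~(i). If $x$ is a cubic end of $e=xy$ with $N(x)=\{y,u,w\}$, then bicontracting $x$ in $G-e$ contracts both $xu$ and $xw$, merging $x$, $u$ and $w$ into one vertex; an edge $uw$ of $G$ would become a \emph{loop}, not a parallel pair. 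The configuration that actually produces multiple edges in the retract is that $u$ and $w$ have a common neighbour $t\neq x$ (so that $ut$ and $wt$ become parallel), or that both ends of $e$ are cubic and there are two independent edges between the two neighbour pairs; compare Propositions~\ref{prop:f-g-nonadjacent} and~\ref{prop:f-g-adjacent} and Figure~\ref{fig:f-and-g-adjacent-or-not}. Since the whole propagation is driven by this local pattern, the error is not cosmetic: the ``triangle at a cubic vertex'' you propose to propagate is not what occurs.

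The second, larger gap is the propagation mechanism. The Thin Edge Theorem only asserts that \emph{some} thin edge exists; it gives no control over where it lies, and ``peeling off the identified substructure'' is not a legitimate move, because the peeled graph is in general not a brick (nor even matching covered), so the theorem cannot be reapplied to it. In the argument the paper follows, the ladder or biwheel around a fixed thin edge $e$ is grown by applying the local lemmas to the new parallel edges $f$ and $g$ themselves --- these are thin in the retract, and the ways they can fail to be removable or thin in $G$ are classified (the analogues here are Lemmas~\ref{lem:index-one-non-removable}--\ref{lem:removable-not-thin}); and the separate step of producing a fresh thin edge \emph{outside} an already-discovered configuration requires the quantitative rank-plus-index strengthening (the analogue of Theorem~\ref{thm:rank-plus-index}), not a second appeal to bare existence. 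Finally, your treatment of the Petersen graph is circular: it is excluded at the outset when you invoke Theorem~\ref{thm:clm-thin-bricks} and reappears only as one of the permitted outcomes, so it cannot also serve as ``the unique sporadic obstruction to termination'' of the propagation. As it stands, the proposal is a plausible outline of the known proof whose two hardest steps are respectively misstated and deferred.
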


It should be noted that Norine and Thomas did not state their results in terms of strictly thin edges.
Subsequently, Carvalho, Lucchesi and Murty \cite{clm08} used their
Thin Edge Theorem (\ref{thm:clm-thin-bricks}) to deduce the
Strictly Thin Edge Theorem (\ref{thm:nt-strictly-thin-bricks}).
The following result of Norine and Thomas \cite{noth07} is
an immediate consequence of Theorem~\ref{thm:nt-strictly-thin-bricks}.
\begin{thm}
\label{thm:nt-simple-brick-reduction}
Given any simple brick~$G$, there exists a sequence $G_1, G_2, \dots, G_k$ of simple bricks such that:
\begin{enumerate}[(i)]
\item $G_1$ is a Norine-Thomas brick,
\item $G_k := G$, and
\item for $2 \leq i \leq k$, there exists a strictly thin edge~$e_i$ of~$G_i$ such that $G_{i-1}$ is the retract of~$G_i - e_i$.
\end{enumerate}
\end{thm}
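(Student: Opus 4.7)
The plan is to derive Theorem~\ref{thm:nt-simple-brick-reduction} directly from Theorem~\ref{thm:nt-strictly-thin-bricks} by a straightforward induction. Since the Strictly Thin Edge Theorem has already done essentially all the work --- it guarantees a strictly thin edge in any simple brick that is not a Norine-Thomas brick --- the task here is just to iterate the reduction and keep track of the fact that the resulting sequence of simple bricks terminates at a Norine-Thomas brick.

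Concretely, I would induct on $|E(G)|$. For the base case, suppose $G$ is a Norine-Thomas brick; then the one-term sequence $G_1 := G$ trivially satisfies all three conditions. For the inductive step, suppose $G$ is a simple brick that is not a Norine-Thomas brick. By Theorem~\ref{thm:nt-strictly-thin-bricks}, $G$ has a strictly thin edge $e$. Let $H$ denote the retract of $G-e$. By definition of a thin edge, $H$ is a brick; by definition of a strictly thin edge, $H$ is simple. Thus $H$ is a simple brick, and applying the induction hypothesis to $H$ yields a sequence $G_1, G_2, \ldots, G_{k-1}$ of simple bricks with $G_1$ a Norine-Thomas brick, $G_{k-1} = H$, and strictly thin edges $e_2, \ldots, e_{k-1}$ realizing the successive retracts. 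Setting $G_k := G$ and $e_k := e$ extends this sequence to one satisfying (i)--(iii) for $G$.

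To justify that the induction parameter decreases, observe that $G-e$ has $|E(G)|-1$ edges, and each bicontraction performed to obtain $H$ from $G-e$ removes two further edges (the two edges incident with the degree-two vertex being bicontracted) while possibly identifying some parallel edges; in any case, $|E(H)| < |E(G)|$, so the induction is well-founded. (Alternatively, one can induct on $|V(G)| + |E(G)|$: the retract has at least $|V(G)| - 4$ vertices and strictly fewer edges, so this lexicographic measure drops as well.)

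There is no real obstacle in this argument; the only thing to be careful about is that the three properties ``thin,'' ``strictly thin,'' and ``simple brick'' fit together exactly as required --- namely, that a strictly thin edge $e$ of a simple brick $G$ produces a retract $H$ which is itself a simple brick, and hence is a legitimate object to feed back into the induction. This is essentially immediate from the definitions in Section~\ref{sec:thin-edges}, and all of the substantive combinatorial content lives in Theorem~\ref{thm:nt-strictly-thin-bricks}.
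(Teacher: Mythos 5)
Your proposal is correct and is exactly the argument the paper has in mind: the paper states Theorem~\ref{thm:nt-simple-brick-reduction} as an immediate consequence of the Strictly Thin Edge Theorem~(\ref{thm:nt-strictly-thin-bricks}) and omits the routine induction, which is precisely what you have written out (find a strictly thin edge, pass to the retract, which is again a simple brick with strictly fewer edges, and recurse until a Norine-Thomas brick is reached).
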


The above theorem implies that every simple brick can be
generated from one of the Norine-Thomas bricks by means
of four expansion operations as described by Carvalho, Lucchesi and Murty (see \cite{clm06}).
These expansion operations are simply the inverse
of the operation of deleting a strictly thin edge and then taking the retract.

\smallskip
We remark that Norine and Thomas proved a generalization of
Theorem~\ref{thm:nt-simple-brick-reduction},
which they refer to as the `splitter theorem for bricks',
since it is motivated by the splitter theorem for $3$-connected graphs
due to Seymour \cite{seym80}.
The notions of thin and strictly thin edges are easily
generalized to braces (see \cite{clm08}).
A `splitter theorem for braces' was established by McCuaig~\cite{mccu01}.

\subsection{Near-Bipartite Bricks}

A nonbipartite \mcg~$G$ is {\it \nb} if it has a pair $R:=\{\alpha,\beta\}$
of edges such that $H:=G-R$ is matching covered and bipartite.
Such a pair $R$ is called a {\it removable doubleton}.
Furthermore, if $G$ happens to be a brick, we say that $G$ is a
{\it \nb\ brick}. For instance, $K_4$ and $\overline{C_6}$
are \nb\ bricks, and each of them has three distinct removable doubletons.
On the other hand, the Petersen graph is not \nb.
(A result of Carvalho, Lucchesi and Murty \cite{clm02b} implies
that if $G$ is a \nb\ \mcg\ then $b(G)=1$.)

\smallskip
Observe that
the edge $\alpha$ joins two vertices in one color class
of~$H$, and that $\beta$ joins two vertices in the other
color class. Consequently, if $M$ is any perfect matching
of~$G$ then $\alpha \in M$ if and only~if $\beta \in M$.
(In particular, neither $\alpha$ nor $\beta$ is a removable edge of~$G$.)
It is easily verified that every Norine-Thomas brick, except
for the odd wheels and for the Petersen graph, is near-bipartite.

\smallskip
The difficulty in using Theorem~\ref{thm:nt-simple-brick-reduction}
as an induction tool for studying \nb\ bricks,
is that even if $G_k := G$ is a \nb\ brick,
there is no guarantee that all of the intermediate bricks
$G_1, G_2, \dots G_{k-1}$ are also \nb.
For instance, the brick shown in
Figure~\ref{fig:double-biwheel-of-typeI}a
is \nb\ with a (unique)
removable doubleton~\mbox{$R:=\{\alpha,\beta\}$}.
Although the edge~$e$ is strictly thin;
the retract of~$G-e$, as shown in
Figure~\ref{fig:double-biwheel-of-typeI}b,
is not \nb\ since it has three edge-disjoint triangles.
%(Note that each edge indicated by a bold line is inadmissible in~$(G-e)-R$,
%whence $R$ is not a removable doubleton of~$G-e$.)

\begin{figure}[!ht]
\centering
%\bigskip
\begin{tikzpicture}[scale=0.85]
%\tikzstyle{every node}=[circle, draw, fill=white,inner sep=0pt, minimum width=5pt]
\draw (0,0) to [out=300,in=180] (4,-2.5) to [out=0,in=240] (8,0);

\draw (0,0) -- (1,0);
\draw (1,0) -- (2,0);
%\draw[dashed] (1,0) -- (2,0);
\draw (2,0) -- (3,0);
\draw (5,0) -- (6,0);
\draw (6,0) -- (7,0);
%\draw[dashed] (6,0) -- (7,0);
\draw (7,0) -- (8,0);

\draw[ultra thick] (0,0) -- (4,1.5);
\draw (0,0)node[fill=black]{};
\draw (1,0) -- (4,-1.5);
%\draw[dashed] (1,0) -- (4,-1.5);
\draw (1,0)node{};
\draw (2,0) -- (4,1.5);
%\draw[dashed] (2,0) -- (4,1.5);
\draw (2,0)node[fill=black]{};
%\draw (3,0) -- (4,-1.5);
\draw[ultra thick] (3,0) -- (4,-1.5);

%\draw (5,0) -- (4,-1.5);
\draw[ultra thick] (5,0) -- (4,-1.5);
\draw (6,0) -- (4,1.5);
%\draw[dashed] (6,0) -- (4,1.5);
\draw (6,0)node[fill=black]{};
\draw (7,0) -- (4,-1.5);
%\draw[dashed] (7,0) -- (4,-1.5);
\draw (7,0)node{};
\draw (4,-1.5)node[fill=black]{};
%\draw (8,0) -- (4,1.5);
\draw[ultra thick] (8,0) -- (4,1.5);
\draw (8,0)node[fill=black]{};
\draw (4,1.5)node{};

\draw (3,0)node{} -- (5,0)node{};

\draw (4,-0.14)node[above,nodelabel]{$\alpha$};
\draw (4,-2.24)node[nodelabel]{$\beta$};

\draw (1.75,0.88)node[nodelabel]{$e$};
\draw (4,-3.2)node[nodelabel]{(a)};
\end{tikzpicture}
\hspace*{0.3in}
\begin{tikzpicture}[scale=0.85]
%\tikzstyle{every node}=[circle, draw, fill=white,inner sep=0pt, minimum width=5pt]
\draw (2,0) to [out=300,in=170] (4,-1.8) to [out=350,in=210] (8,0);
\draw (8,0) -- (4,-1.5);

\draw (2,0) -- (3,0);
\draw (5,0) -- (6,0);
\draw (6,0) -- (7,0);
\draw (7,0) -- (8,0);

\draw (2,0) -- (4,1.5);
\draw (2,0)node{};
\draw (3,0) -- (4,-1.5);

\draw (5,0) -- (4,-1.5);
\draw (6,0) -- (4,1.5);
\draw (6,0)node{};
\draw (7,0) -- (4,-1.5);
\draw (7,0)node{};
\draw (4,-1.5)node{};
\draw (8,0) -- (4,1.5);
\draw (8,0)node{};
\draw (4,1.5)node{};

\draw (3,0) -- (5,0);
\draw (3,0)node{};
\draw (5,0)node{};
\draw (5,-3.2)node[nodelabel]{(b)};
\end{tikzpicture}
\vspace*{-0.3in}
\caption{(a) A \nb\ brick~$G$ with a thin edge~$e$ ;
(b) The retract of~$G-e$ is not \nb}
\label{fig:double-biwheel-of-typeI}
%\bigskip
\end{figure}

\smallskip
In other words, deleting an arbitrary thin edge
may not preserve the property of being \nb. In this sense,
the Thin Edge Theorem~(\ref{thm:clm-thin-bricks}) and the Strictly Thin Edge Theorem~(\ref{thm:nt-strictly-thin-bricks})
are inadequate for obtaining inductive proofs of results
that pertain only to the class of \nb\ bricks.

\smallskip
To fix this problem, the first author started this line of investigation and decided
to look for thin edges whose deletion preserves the property of being \nb.
Kothari \cite{koth16,koth19} proved a `thin edge theorem' for near-bipartite bricks;
in particular, he showed that every near-bipartite brick~$G$ distinct from $K_4$
and $\overline{C_6}$ has a thin edge $e$ such that the retract of $G-e$ is also
near-bipartite (see Theorem~\ref{thm:Rthin-nb-bricks}).
In the present paper, we use this to deduce a `strictly thin edge theorem'
for near-bipartite bricks. This is similar to the approach of Carvalho, Lucchesi and Murty \cite{clm08} ---
they use their Thin Edge Theorem (\ref{thm:clm-thin-bricks}) to deduce
the Strictly Thin Edge Theorem (\ref{thm:nt-strictly-thin-bricks}) of Norine and Thomas.

\smallskip
As in \cite{koth16,koth19}, we find it convenient to fix a removable doubleton~$R$ (of the brick under consideration),
and then look for a strictly thin edge whose deletion preserves this removable doubleton.
To make this precise, we will first define a special type of removable edge which
we call `\Rcomp'.

\subsubsection{\Rcomp\ Edges}
\label{sec:Rcompatible-edges}

We use the abbreviation {\it \Rgraph} for a \nb\ graph~$G$
with (fixed) removable doubleton~$R$, and we
shall refer to $H:=G-R$ as its {\it underlying bipartite graph}.
In the same spirit,
an {\it \Rbrick} is a brick with a removable doubleton~$R$.

\smallskip
A removable edge~$e$ of an \Rgraph~$G$ is {\it \Rcomp} if it is
removable in~$H$ as well.
Equivalently, an edge~$e$ is \Rcomp\ if $G-e$ and $H-e$ 
are both matching covered.
For instance, the graph~$St_8$ shown in Figure~\ref{fig:NT-bricks}d
has two removable doubletons \mbox{$R:=\{\alpha,\beta\}$} and
$R' := \{\alpha', \beta'\}$, and its unique removable edge~$e$ is
\Rcomp\ as well as \comp{R'}.

\smallskip
Now, let $G$ denote the \Rbrick\ shown in
Figure~\ref{fig:double-biwheel-of-typeI}a,
where $R:=\{\alpha,\beta\}$.
The thin edge~$e$ is incident with an edge of~$R$
at a cubic vertex; consequently, $H-e$ has a vertex
whose degree is only one, and so it is not matching covered.
In particular, $e$ is not \Rcomp.

\smallskip
The brick shown in Figure~\ref{fig:pseudo-biwheel}
has two distinct removable doubletons $R:=\{\alpha,\beta\}$
and $R':=\{\alpha',\beta'\}$. Its edges $e$~and~$f$
are both \comp{R'}, but neither of them is \Rcomp.

\begin{figure}[!ht]
%\bigskip
\centering
\begin{tikzpicture}[scale=0.85]

%\draw (4,0.7)node[nodelabel]{$a_1$};
%\draw (9,1.3)node[nodelabel]{$b_1$};
%\draw (6.5,3.1)node[nodelabel]{$u_1$};
%\draw (6.5,-1.1)node[nodelabel]{$w_1$};
%\draw (2,1)node[left,nodelabel]{$a_2$};
%\draw (11,1)node[right,nodelabel]{$b_2$};

\draw (5.3,1.7)node[nodelabel]{$e$};
\draw (7.7,0.3)node[nodelabel]{$f$};

\draw (3,0.77)node[nodelabel]{$\alpha$};
\draw (10,1.25)node[,nodelabel]{$\beta$};
\draw (4,2.15)node[nodelabel]{$\alpha'$};
\draw (9,-0.2)node[nodelabel]{$\beta'$};

\draw (2,1) to [out=80,in=180] (6.5,3.7) to [out=0,in=100] (11,1);

\draw (2,1) -- (4,1);
\draw (9,1) -- (11,1);
\draw (6.5,2.8) -- (2,1);
\draw (6.5,-0.8) -- (11,1);

\draw (2,1)node{};
\draw (11,1)node[fill=black]{};

\draw (4,1) -- (9,1);

\draw (4,1)node{} -- (6.5,2.8);
\draw (5,1)node[fill=black]{} -- (6.5,-0.8);
\draw (6,1)node{} -- (6.5,2.8);
\draw (7,1)node[fill=black]{} -- (6.5,-0.8);
\draw (8,1)node{} -- (6.5,2.8)node[fill=black]{};
\draw (9,1)node[fill=black]{} -- (6.5,-0.8)node{};

\end{tikzpicture}
%\vspace*{-0.1in}
\caption{$e$ and $f$ are \comp{R'},
but they are not \Rcomp}
\label{fig:pseudo-biwheel}
%\bigskip
\end{figure}

\smallskip
Observe that, if $e$ is an \Rcomp\ edge of an \Rgraph~$G$,
then $R$ is a removable doubleton of~$G-e$,
whence $G-e$ is also \nb\ and thus $b(G-e)=1$.
Consequently, every \Rcomp\ edge is \binv.

\smallskip
Furthermore, as shown in \cite{koth16,koth19}, if $e$ is an \Rcomp\ edge of an
\Rbrick~$G$ then the unique brick~$J$ of~$G-e$ is also an \Rbrick;
in particular, $J$ is \nb.
The following is a special case of a
theorem of Carvalho, Lucchesi and Murty \cite{clm99}.

\begin{thm}
{\sc [\Rcomp\ Edge Theorem]}
\label{thm:clm-Rcompatible-nb-bricks}
Every \Rbrick\ distinct from $K_4$ and $\overline{C_6}$ has an \Rcomp\ edge.
\end{thm}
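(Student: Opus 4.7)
The plan is to reduce the statement to a more general structural result on removable classes in matching covered graphs, established by Carvalho, Lucchesi and Murty in~\cite{clm99}. The key preliminary observation is that if $e \in E(H)$ happens to be removable in $G$, then $R$ remains a removable doubleton of $G - e$, whence $H - e = (G - e) - R$ is automatically matching covered. Hence such an $e$ is \Rcomp, and it suffices to produce a removable edge of~$G$ that lies outside~$R$.

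I would next invoke the notion of removable classes: every edge of $G$ belongs to a unique minimal removable edge set, which is either a singleton $\{e\}$ (when $e$ is itself removable) or a removable doubleton. Since $G$ is an \Rbrick, the set $R$ is one such class and the remaining classes partition~$E(H)$. The relevant result of~\cite{clm99} supplies a lower bound on the number of singleton removable classes and, in particular, guarantees that any brick possessing at least one removable doubleton, but distinct from $K_4$ and $\overline{C_6}$, contains a removable single edge disjoint from that doubleton. Given this, the reduction in the preceding paragraph produces the required \Rcomp\ edge.

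The exceptional cases would be verified directly. In $K_4$ the six edges partition into three removable doubletons (the three perfect matchings) and no single edge is removable, since deleting any edge leaves the endpoints of the opposite edge with no alternative pairing. The triangular prism $\overline{C_6}$ admits an analogous decomposition into three removable doubletons with no removable singletons. The main obstacle --- and the source of the real content --- is the structural step showing that these two graphs are the only exceptions: the argument would exploit bicriticality together with $3$-connectivity, and the fact that the two edges of a removable doubleton $R' = \{\alpha', \beta'\}$ lie in opposite color classes of $G - R'$, to force the existence of a singleton removable class outside $R$ once $|V(G)| \geq 8$.
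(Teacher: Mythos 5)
Your reduction rests on a false preliminary observation. You claim that if $e \in E(H)$ is removable in $G$, then $R$ remains a removable doubleton of $G-e$, so that $H-e$ is automatically matching covered. But for $R$ to be a removable doubleton of $G-e$ one needs precisely that $(G-e)-R = H-e$ be matching covered; this is the conclusion you are trying to reach, not a consequence of the removability of $e$ in $G$. And it genuinely fails: the paper itself exhibits counterexamples. In the brick of Figure~\ref{fig:double-biwheel-of-typeI}a, the edge $e$ is removable in $G$ (indeed strictly thin), yet because $e$ meets an edge of $R$ at a cubic vertex, $H-e$ has a vertex of degree one and is therefore not matching covered, so $e$ is not $R$-compatible. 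Likewise, in the pseudo-biwheel of Figure~\ref{fig:pseudo-biwheel} the edges $e$ and $f$ are removable in $G$ (they are even $R'$-compatible) but neither is $R$-compatible. Consequently, producing a removable edge of $G$ outside $R$ --- which is easy, since by Lov\'asz's theorem every brick other than $K_4$ and $\overline{C_6}$ has a removable edge, and no edge of a removable doubleton is itself removable --- does not suffice.

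The entire content of the theorem lies in arranging removability in $G$ and in $H$ simultaneously, and that is exactly what the cited result of Carvalho, Lucchesi and Murty accomplishes; the present paper does not reprove it but quotes the stronger statement from \cite{clm99} that every near-bipartite graph of minimum degree at least three has such an edge. Your direct verification of the exceptional cases $K_4$ and $\overline{C_6}$ is correct, but the reduction feeding into the general case collapses, so the proposal as written does not prove the theorem.
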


In \cite{clm99},
they proved a stronger result. In particular,
they showed the existence of an \Rcomp\ edge in {\Rgraph}s
with minimum degree at least three.
(They did not use the term `\Rcomp'.)
Using the notion of \mbox{$R$-compatibility}, we
now define a type of thin edge whose deletion preserves
the property of being \nb.

\subsubsection{\Rthin\ and Strictly \Rthin\ Edges}
\label{sec:Rthin-edges}

A thin edge~$e$ of an \Rbrick~$G$
is {\it \Rthin} if it is \Rcomp.
Equivalently, an edge~$e$ is \Rthin\ if it is \Rcomp\ as well as thin,
and in this case, the retract of~$G-e$ is also an \Rbrick.

\smallskip
As noted earlier, the graph $St_8$, shown in Figure~\ref{fig:NT-bricks}d,
has two removable doubletons $R$~and~$R'$. Its unique removable edge~$e$
is \Rthin\ as well as \thin{R'}.
%; to see this, note that
%the retract~$J$ of~$St_8-e$ is isomorphic to $K_4$ with multiple edges,
%and each of $R$ and $R'$ is a removable doubleton of~$J$.
Using the \Rcomp\ Edge Theorem (\ref{thm:clm-Rcompatible-nb-bricks})
of Carvalho, Lucchesi and Murty,
the following `thin edge theorem' was proved by Kothari \cite{koth16,koth19}.
\begin{thm}
{\sc [\Rthin\ Edge Theorem]}
\label{thm:Rthin-nb-bricks}
Every \Rbrick\ distinct from $K_4$ and $\overline{C_6}$ has an \Rthin\ edge.
\end{thm}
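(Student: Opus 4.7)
The plan is to adapt the strategy used by Carvalho, Lucchesi and Murty in their proof of the Thin Edge Theorem~(\ref{thm:clm-thin-bricks}), replacing the starting object (a \binv\ edge) with an \Rcomp\ edge guaranteed by Theorem~\ref{thm:clm-Rcompatible-nb-bricks}. The subtlety is that at every stage of the argument we must stay within the set of \Rcomp\ edges; it does not suffice to produce any thin edge, we need a thin edge that remains removable in the underlying bipartite graph $H = G - R$.

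First, I would fix an \Rcomp\ edge $e$ of the \Rbrick\ $G$, which exists since $G \notin \{K_4, \overline{C_6}\}$. Suppose, for contradiction, that no \Rcomp\ edge of $G$ is thin. Since $e$ is \Rcomp\ it is in particular \binv, so $G-e$ is a near-brick and its retract $J$ is matching covered. Failing to be a brick, $J$ must admit a nontrivial tight cut. Pulling this cut back through the bicontractions of the (at most two) degree-two vertices of $G - e$, we obtain a nontrivial tight cut of $G-e$ which is \emph{not} one of the trivial barrier cuts created by the deletion of $e$. Such a cut is either a barrier cut associated with a nontrivial barrier $B$ of $G-e$, or a $2$-separation cut. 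Since $G$ itself is $3$-connected and bicritical by Theorem~\ref{thm:elp-bricks}, any such barrier or $2$-cut of $G-e$ is forced to interact with the endpoints of $e$ in a very restricted way; I would codify these interactions in a few structural lemmas, paying particular attention to how $B$ (or the separating pair) may share vertices with the endpoints of $\alpha$ and $\beta$.

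The heart of the argument would be a reduction step: given an \Rcomp\ edge $e$ that is not thin, exhibit another \Rcomp\ edge $e'$ whose obstruction in $G - e'$ is strictly smaller than that of $e$ in $G - e$, measured for instance by the minimum cardinality of a nontrivial shore arising in a tight cut decomposition. Iterating this step, which is well-founded since the measure is a positive integer bounded by $|V(G)|$, eventually produces an \Rcomp\ edge with no nontrivial tight cut in its deletion, that is, an \Rthin\ edge.

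I expect the main obstacle to be the simultaneous management of two conditions at each reduction step: the replacement edge $e'$ must remain \Rcomp\ (for instance, $e'$ cannot be incident with a cubic vertex that also meets $\alpha$ or $\beta$, since otherwise $H - e'$ would cease to be matching covered), \emph{and} the chosen measure of obstruction must strictly decrease. The example of Figure~\ref{fig:double-biwheel-of-typeI}, where the unique thin edge of $G$ fails to be \Rcomp, highlights the kind of configuration that we must rule out as `persistent', and suggests a careful case analysis indexed by how the obstructing barrier of $G - e$ meets the edges of $R$ and the endpoints of $e$.
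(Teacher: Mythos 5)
Your high-level strategy --- start from an \Rcomp\ edge supplied by the \Rcomp\ Edge Theorem~(\ref{thm:clm-Rcompatible-nb-bricks}) and iterate a reduction step governed by a well-founded measure --- is exactly how the paper obtains this theorem: it invokes Theorem~\ref{thm:rank-plus-index}, which says that an \Rcomp\ edge $e$ is either \Rthin\ or can be traded for another \Rcomp\ edge $f$ with ${\sf rank}(f)+{\sf index}(f) > {\sf rank}(e)+{\sf index}(e)$; since ${\sf rank}$ is at most $|V(G)|$ and ${\sf index}$ is at most $2$, the iteration must terminate at an \Rthin\ edge. (The proof of that reduction step is itself not in this paper; it is quoted from \cite{koth16,koth17}.)

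The gap is that your proposal stops exactly where the work begins. The entire content of the theorem is the reduction step, and you only assert that it can be carried out (``I would codify these interactions in a few structural lemmas \dots exhibit another \Rcomp\ edge $e'$ whose obstruction is strictly smaller''). Moreover, the monovariant you propose --- the minimum cardinality of a nontrivial shore in a tight cut decomposition, required to strictly \emph{decrease} --- is neither justified nor the one known to work: the actual argument uses ${\sf rank}+{\sf index}$, which strictly \emph{increases} toward its maximum (attained precisely at thinness), and the replacement edge $f$ is pinned down by a specific barrier of $G-e$ (an end of $f$ has all of its $G-e$-neighbours inside that barrier), not by a smallest shore. Without a proof that your measure strictly decreases under some concrete replacement rule --- and without that, nothing prevents the replacement from cycling back to a previously visited edge --- the induction is not well-founded. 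A second, smaller slip: an \Rthin\ edge is not one whose deletion is free of nontrivial tight cuts ($G-e$ generally retains the barrier cuts around its degree-two vertices); the correct termination condition is that every barrier of $G-e$ has at most two vertices, equivalently that the retract of $G-e$ is a brick (Proposition~\ref{prop:equivalent-definition-Rthin}).
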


Each \Rcomp\ edge of an \Rbrick\ may be associated with two integer
parameters
--- {\it rank} and {\it index} --- as defined in \cite{koth19}.
In fact, Kothari proved the following
stronger
result that immediately implies the
\Rthin\ Edge Theorem (\ref{thm:Rthin-nb-bricks})
since the rank and index are bounded quantities.
\begin{thm}\label{thm:rank-plus-index}
Let $G$ be an \Rbrick\ which is distinct from $K_4$~and~$\overline{C_6}$,
and let $e$ denote an \Rcomp\ edge of~$G$.
Then one of the following alternatives hold:
\begin{itemize}
\item either $e$ is \Rthin,
\item or there exists another \Rcomp\ edge~$f$ such that:
\begin{enumerate}[(i)]
\item $f$ has an end each of whose neighbours in~$G-e$ lies in a barrier of~$G-e$, and
\item ${\sf rank}(f) + {\sf index}(f) > {\sf rank}(e) + {\sf index}(e)$.
\end{enumerate}
\end{itemize}
\end{thm}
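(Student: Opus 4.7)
The plan is to prove the dichotomy by assuming that $e$ is not \Rthin\ and producing the promised edge~$f$. Since $e$ is \Rcomp, the graph $G-e$ is a near-brick whose unique brick~$J$ has order ${\sf rank}(e)$, and ${\sf index}(e)$ counts the maximal nontrivial barriers of $G-e$. The failure of $e$ to be \Rthin\ means that the retract of $G-e$ is not a brick. Hence $G-e$ admits a nontrivial tight cut $\partial(X)$ whose shore $X$ is not of the form $\{w\}\cup N(w)$ for a vertex $w$ of degree two. Such a cut is either (a) a barrier cut $\partial(V(K))$ arising from a maximal nontrivial barrier $B$ of $G-e$ together with an odd component $K$ of $(G-e)-B$ satisfying $|V(K)|\geq 3$, or (b) a \mbox{$2$-separation} cut associated with a pair $\{u,w\}$ whose removal from $G-e$ leaves an even component of order at least two.

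I would treat case~(a) first, the argument in case~(b) being entirely analogous. Fix such a pair $(B,K)$; inside $K$ I locate a vertex~$v$ all of whose $(G-e)$-neighbours lie in $B$. The existence of such $v$ follows from a pigeonhole argument applied to the $\partial(V(K))$-contraction of $G-e$: on the ``$K$-side'' this contraction is a smaller matching covered graph in which one colour class of its bipartite part must contain an interior vertex of $K$ whose neighbourhood lies entirely in $B$. For any edge of $G-e$ incident with $v$, condition~(i) holds at the end~$v$, so it suffices to find an \Rcomp\ edge $f \in \partial_G(v)\setminus\{e\}$. By Theorem~\ref{thm:elp-bricks}, $v$ has degree at least three in $G$; by Corollary~\ref{cor:application-of-LV}, at most two edges of $\partial_H(v)$ are non-removable in $H:=G-R$; and at most one edge of $\partial_G(v)$ lies in $R$. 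Hence a candidate $f$ remains, and its removability in $G$ follows from the \Rcomp\ Edge Theorem~(\ref{thm:clm-Rcompatible-nb-bricks}) applied to a suitable tight-cut contraction, or via Corollary~\ref{cor:quadrilateral-admissible-removable} when a quadrilateral through $f$ can be located at $v$.

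To verify condition~(ii) I would compare the tight-cut structures of $G-e$ and $G-f$. Removal of $f$ at $v \in V(K)$ destroys the tightness of $\partial(V(K))$: in $G-f$ the set $B$ is no longer a barrier, and the pieces that the tight cut decomposition of $G-e$ had peeled off merge back into the unique brick of $G-f$. Consequently ${\sf rank}(f) \geq {\sf rank}(e) + |V(K)| - 1 \geq {\sf rank}(e) + 2$. Since ${\sf index}\in\{0,1,2\}$ by the bound of Carvalho, Lucchesi and Murty~\cite{clm02a}, any decrease in index from $e$ to $f$ is at most two, and hence ${\sf rank}(f)+{\sf index}(f) > {\sf rank}(e)+{\sf index}(e)$, as required.

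The main obstacle will be the detailed verification of the rank increase in the previous paragraph, namely the claim that passing from $G-e$ to $G-f$ genuinely absorbs the offending barrier cut $\partial(V(K))$ into the brick rather than merely replacing it by a different nontrivial tight cut. This demands coordinated use of the Canonical Partition Theorem~(\ref{thm:canonical-partition}), the Lov{\'a}sz--Vempala Lemma~(\ref{lem:lovasz-vempala}), and the characterization of non-removable edges in bipartite graphs (Proposition~\ref{prop:characterization-non-removable-bipartite}). A secondary technical point is the $R$-compatibility of $f$ when $v$ happens to be close to $\{a_1,a_2,b_1,b_2\}$: in that configuration one may need to re-choose $v$ more carefully inside $K$ so that no edge of $\partial_G(v)$ coincides with $\alpha$ or $\beta$, while ensuring that $K$ still contributes the required rank gain.
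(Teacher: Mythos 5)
A preliminary remark: the paper itself does not prove Theorem~\ref{thm:rank-plus-index}; it is quoted from \cite{koth16,koth17}, so there is no in-paper proof to compare yours against. Your high-level plan --- find a vertex $v$ whose entire $(G-e)$-neighbourhood lies in a large barrier of $G-e$, pick an $R$-compatible edge $f$ at $v$, and show that ${\sf rank}+{\sf index}$ strictly increases --- is indeed the shape of the argument in those references. But as written the sketch contains a concrete error and omits the two steps that carry all the difficulty. The error is the placement of $v$: you look for it \emph{inside the nontrivial odd component $K$} of $(G-e)-B$, yet a vertex of $K$ all of whose $(G-e)$-neighbours lie in $B$ would be isolated in $K$, contradicting $|V(K)|\geq 3$; your ``pigeonhole argument'' produces nothing. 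The correct location is the set $I$ of \emph{trivial} odd components (isolated vertices) of $(G-e)-S$ described in Section~\ref{sec:index-rank}, which is nonempty of size $|S|-1\geq 2$ precisely because $e$ is not $R$-thin (Proposition~\ref{prop:equivalent-definition-Rthin}); this is also exactly how condition (i) is consumed later in Claim~\ref{claim:second-Rthin-edge}. The detour through general tight cuts and $2$-separation cuts is unnecessary for the same reason: non-thinness of an $R$-compatible edge is equivalent to $G-e$ having a barrier with at least three vertices.

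The quantitative claim ${\sf rank}(f)\geq{\sf rank}(e)+|V(K)|-1$ cannot be correct: the unique brick of $G-e$ is a contraction of $(G-e)/\overline{V(K)}$, so ${\sf rank}(e)\leq |V(K)|+1$ already, and your bound would force ${\sf rank}(f)>|V(G)|$ whenever $|V(K)|>2|S|-1$. What deleting $f$ can plausibly recover is of the order of $|S\cup I|=2|S|-1$, and even then one must show that $G-f$ does not simply acquire a \emph{different} large barrier --- the step you yourself flag as the main obstacle and do not carry out, and the one that requires the real work. The existence of an $R$-compatible $f\in\partial(v)\setminus\{e\}$ is also not established: if $v\in I$ is cubic in $G$ and incident with an edge of $R$, then $\partial_H(v)$ consists of two edges, neither removable in $H$, so no edge at $v$ is $R$-compatible; hence $v$ must be chosen with care, and ``apply the $R$-compatible Edge Theorem to a suitable tight-cut contraction'' is a placeholder rather than an argument. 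In sum, the proposal is a reasonable outline of the known strategy, but with its key vertex misplaced, its key inequality false as stated, and its decisive lemmas unproved.
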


%\newpage
An \Rthin\ edge~$e$ of a simple \Rbrick~$G$
is {\it \sRthin} if it is strictly thin.
In other words, a \sRthin\ edge~$e$ is one
which is \Rcomp\ as well as strictly thin;
and in this case, the retract of~$G-e$ is also a simple \Rbrick.

\smallskip
For instance, let $G$ denote the \Rbrick\ shown
in Figure~\ref{fig:strictly-Rthin}(a),
where \mbox{$R:=\{\alpha,\beta\}$}.
The retract of~$G-e$ is the truncated biwheel~$T_8$ shown in
Figure~\ref{fig:strictly-Rthin}(b); consequently, $e$ is \sRthin.

\begin{figure}[!ht]
%\vspace*{-0.15in}
\centering
%\smallskip
\begin{tikzpicture}[scale=1.0]
\draw (0.3,-0.75)node[nodelabel]{$\alpha$};
\draw (4.7,0.75)node[nodelabel]{$\beta$};
\draw[ultra thick] (2.5,1.5) to [out=120,in=90] (-0.7,0) to [out=270,in=240] (2.5,-1.5);
\draw (-0.9,0)node[nodelabel]{$e$};
\draw (0,0) -- (5,0);
\draw (1.5,1.5) -- (3.5,1.5);
\draw (1.5,-1.5) -- (3.5,-1.5);
\draw (0,0) -- (1.5,-1.5)node{} -- (1,0)node[fill=black]{};
\draw (2,0)node{} -- (1.5,1.5)node[fill=black]{} -- (0,0)node{};
\draw (3,0)node[fill=black]{} -- (3.5,-1.5)node{} -- (5,0);
\draw (4,0)node{} -- (3.5,1.5)node[fill=black]{} -- (5,0)node[fill=black]{};
\draw (2.5,1.5)node{};
\draw (2.5,-1.5)node[fill=black]{};
\draw (2.5,-2.5)node[nodelabel]{(a)};
\end{tikzpicture}
\hspace*{0.5in}
\begin{tikzpicture}[scale=1.0]
\draw (0.5,-0.85)node[nodelabel]{$\alpha$};
\draw (4.5,0.85)node[nodelabel]{$\beta$};
\draw (2.5,-2.5)node[nodelabel]{(b)};
\draw (0,0) -- (5,0);
\draw (0,0) -- (2.5,1.5);
\draw[dashed] (2,0) -- (2.5,1.5);
\draw (2,0)node{};
\draw[dashed] (4,0) -- (2.5,1.5);
\draw (4,0)node{};
\draw (5,0) -- (2.5,1.5)node[fill=black]{};
\draw (0,0)node{} -- (2.5,-1.5);
\draw[dashed] (1,0) -- (2.5,-1.5);
\draw (1,0)node[fill=black]{};
\draw[dashed] (3,0) -- (2.5,-1.5);
\draw (3,0)node[fill=black]{};
\draw (5,0)node[fill=black]{} -- (2.5,-1.5)node{};
\end{tikzpicture}
\vspace*{-0.15in}
\caption{Edge $e$ is \sRthin}
\label{fig:strictly-Rthin}
\bigskip
\end{figure}

Recall that the Norine-Thomas bricks are precisely those simple bricks
which are free of strictly thin edges.
In particular, every $R$-brick, which is a member of the Norine-Thomas families,
is free of \sRthin\ edges.
A natural question arises as to whether there are any simple {\Rbrick}s, different
from the Norine-Thomas bricks, which are also free of \sRthin\ edges.
It turns out that there indeed are such bricks;
we have already encountered two examples
in Figures~\ref{fig:double-biwheel-of-typeI}a and \ref{fig:pseudo-biwheel},
as explained below.

\smallskip
Let $G$ denote the \Rbrick, shown in
Figure~\ref{fig:double-biwheel-of-typeI}a, where $R:=\{\alpha, \beta\}$
is its unique removable doubleton. It can be checked that
$G$ has precisely four strictly thin edges, depicted by bold lines;
these are similar under the automorphisms of the graph.
As noted earlier, if $e$ is any of these edges, then $e$
is not \Rcomp; furthermore, the retract of~$G-e$
is isomorphic to the graph shown in Figure~\ref{fig:double-biwheel-of-typeI}b,
which is not even near-bipartite as it has three edge-disjoint triangles.
Thus, the generation of~$G$ using the
Norine-Thomas procedure
cannot be achieved within the class of \nb\ bricks.

\smallskip
Now, let $G$ denote the brick shown in Figure~\ref{fig:pseudo-biwheel};
it has two removable doubletons $R:=\{\alpha,\beta\}$
and $R':=\{\alpha',\beta'\}$. It may be verified that
$G$ has precisely two strictly thin edges, namely $e$ and $f$,
each of which is \comp{R'} but neither is \Rcomp.
In particular, $G$ is free of \sRthin\ edges; in this sense it is similar
to the graph in Figure~\ref{fig:double-biwheel-of-typeI}a.
On the other hand, $G$ has strictly \thin{R'}\ edges;
if $e$ is any such edge then
the retract of~$G-e$ is a simple \nb\ brick with removable
doubleton~$R'$.
In this sense,
$G$ is different from the graph in Figure~\ref{fig:double-biwheel-of-typeI}.

\subsection{Families of {\Rbrick}s free of Strictly \Rthin\ Edges}
\label{sec:new-families}

\smallskip
We will introduce seven infinite families of simple {\Rbrick}s
which are free of \sRthin\ edges, and are different from the Norine-Thomas families.
The members of these
will be described using their specific bipartite subgraphs, each of which is either
a ladder or a partial biwheel; see Figure~\ref{fig:biwheels-ladders}.
The occurrence of these subgraphs may be justified as follows.
Let $G$ be a simple \Rbrick\ which is free of
\sRthin\ edges. If $e$ is any \Rthin\ edge of~$G$, at least one end of~$e$
is cubic and the retract of~$G-e$ has multiple edges. These strictures
can be used to deduce that $G$ contains either a ladder or a partial biwheel,
or both, as subgraphs.

\smallskip
In our descriptions of these families, we use $\alpha$~and~$\beta$ to denote
the edges of the (fixed) removable doubleton~$R$. Apart from~$R$, a member
may have at most one removable doubleton which will be denoted as~$R':=\{\alpha',\beta'\}$.
We adopt the notational conventions stated in Section~\ref{sec:NT-bricks}.
(Recall that a partial biwheel of order six is also a ladder; for this reason, some
of our families overlap.)

\bigskip
\noindent
{\sc Pseudo-Biwheels.}
Let $K[A_1,B_1]$ denote a partial biwheel, of order at least eight, and
with external spokes $a_1u_1$ and $b_1w_1$.
Then the graph~$G$ obtained from~$K$, by adding two new vertices
$a_2$ and $b_2$, and by adding five new edges
$\alpha:=a_1a_2, \alpha':=u_1a_2, \beta:=b_1b_2, \beta':=w_1b_2$
and $a_2b_2$, is called a {\it pseudo-biwheel}.
Figure~\ref{fig:pseudo-biwheel} shows the smallest pseudo-biwheel.

\bigskip
It is worth comparing the above with our desription of staircases
in Section~\ref{sec:NT-bricks}.
Although a pseudo-biwheel $G$ is free of \sRthin\ edges,
the two external spokes of~$K$, namely $a_1u_1$ and $b_1w_1$,
are both strictly \thin{R'}.

\bigskip
In order to describe the members of the remaining six families,
we need two (sub)graphs.
For \mbox{$i \in \{1,2\}$}, let $K_i[A_i,B_i]$ denote either a
ladder or a partial biwheel with external rungs/spokes $a_iu_i$ and $b_iw_i$,
such that $K_1$ and $K_2$ are disjoint.

\begin{figure}[!ht]
\bigskip
\centering
\begin{tikzpicture}[scale=0.7]

%\draw (2.25,-0.3)node[nodelabel]{$e$};

\draw (0,5.4)node[nodelabel]{$a_2$};
\draw (4.5,5.4)node[nodelabel]{$b_2$};

\draw (0,-0.4)node[nodelabel]{$a_1$};
\draw (4.5,-0.4)node[nodelabel]{$b_1$};
%\draw (-0.35,2.5)node[nodelabel]{$u_1$};
%\draw (4.9,2.5)node[nodelabel]{$w_1$};

\draw (-1.6,2.5)node[nodelabel]{$\alpha$};
\draw (6.1,2.5)node[nodelabel]{$\beta$};

\draw[thick] (0,0) to [out=150,in=210] (0,5);
\draw[thick] (4.5,0) to [out=30,in=330] (4.5,5);

\draw (0,5)node{} -- (4.5,5)node[fill=black]{} -- (4.5,2.5) -- (3,3.5) -- (1.5,3.5) -- (0,2.5) -- (0,5);
\draw (1.5,5)node[fill=black]{} -- (1.5,3.5)node{};
\draw (3,5)node{} -- (3,3.5)node[fill=black]{};

\draw[dashed] (0,0) -- (4.5,0) -- (4.5,2.5) -- (3,1.5) -- (1.5,1.5) -- (0,2.5) -- (0,0);
\draw[dashed] (1.5,0) -- (1.5,1.5);
\draw[dashed] (3,0) -- (3,1.5);

\draw (0,0)node{};
\draw (4.5,0)node[fill=black]{};
\draw (4.5,2.5)node{};
\draw (0,2.5)node[fill=black]{};
\draw (1.5,0)node[fill=black]{};
\draw (1.5,1.5)node{};
\draw (3,0)node{};
\draw (3,1.5)node[fill=black]{};

\draw (2.25, -1.65)node[nodelabel]{(a)};
\end{tikzpicture}
\hspace*{0.2in}
\begin{tikzpicture}[scale=0.7]

\draw (-1.2,1.5)node[nodelabel]{$\alpha$};
\draw (6.7,2.25)node[nodelabel]{$\beta$};

\draw (0,0) to [out=180,in=180] (0,3);
\draw (5,0) to [out=0,in=0] (5,4.5);

\draw (0,0) -- (5,0);
\draw (2.5,1.5) -- (1,0)node[fill=black]{};
\draw (2.5,1.5) -- (3,0)node[fill=black]{};
\draw (2.5,1.5)node{} -- (5,0)node[fill=black]{};
\draw (2.5,-1.5) -- (0,0)node{};
\draw (2.5,-1.5) -- (2,0)node{};
\draw (2.5,-1.5)node[fill=black]{} -- (4,0)node{};

\draw (0,0.4)node[nodelabel]{$a_2$};
\draw (5,-0.4)node[nodelabel]{$b_2$};

\draw (2.5,1.85)node[nodelabel]{$w_2$};
\draw (2.5,-1.9)node[nodelabel]{$u_2$};

\draw (0,3) -- (5,3)node{} -- (5,4.5)node[fill=black]{} -- (0,4.5)node[fill=black]{} --
(0,3)node{};
\draw (5,2.6)node[nodelabel]{$w_1$};
\draw (5,4.85)node[nodelabel]{$b_1$};
\draw (0,4.85)node[nodelabel]{$u_1$};
\draw (0,2.6)node[nodelabel]{$a_1$};

\draw (1.25,3)node[fill=black]{} -- (1.25,4.5)node{};
\draw (2.5,3)node{} -- (2.5,4.5)node[fill=black]{};
\draw (3.75,3)node[fill=black]{} -- (3.75,4.5)node{};

\draw (2.5,-3.2)node[nodelabel]{(b)};
\end{tikzpicture}
%\vspace*{-0.1in}
\caption{(a) A double ladder of type~I ; (b) A laddered biwheel of type~I
is obtained by identifying $u_1$ with $u_2$ and likewise $w_1$ with $w_2$}
\label{fig:typeI-families}
\bigskip
\end{figure}
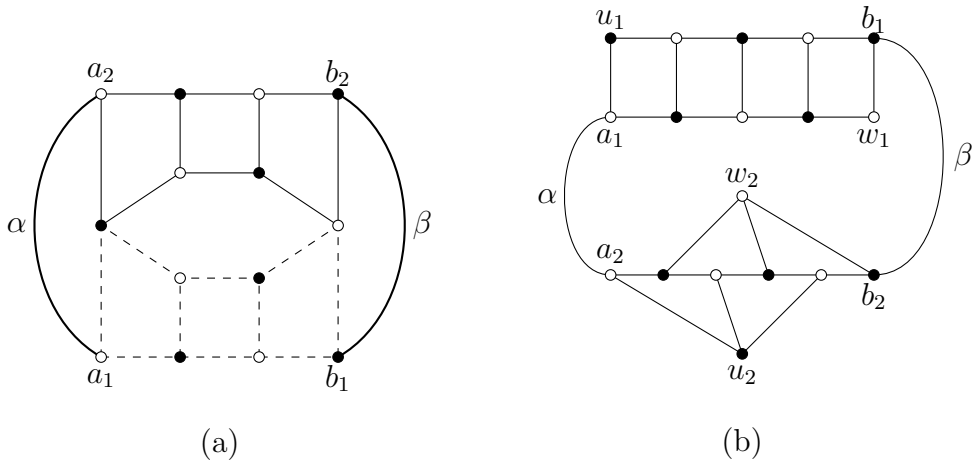

\bigskip
\noindent
{\sc Double Biwheels, Double Ladders and Laddered Biwheels of Type~I.}
Let the graph~$G$ be obtained from $K_1 \cup K_2$,
by adding edges $\alpha:=a_1a_2$ and $\beta:=b_1b_2$, by
identifying vertices $u_1$~and~$u_2$,
and by identifying vertices $w_1$~and~$w_2$.
There are three possibilities depending on the graphs $K_1$ and $K_2$.
In the case in which $K_1$ and $K_2$ are
both partial biwheels, $G$ is a {\it double biwheel of type~I}. Likewise,
in the case in which $K_1$ and $K_2$ are both ladders, $G$ is a
{\it double ladder of type~I}. Finally, when one of $K_1$ and $K_2$ is a partial
biwheel and the other one is a ladder, $G$ is a
{\it laddered biwheel of type~I}.

\smallskip
A member of any of these families has a unique removable doubleton~$R$,
and is free of \sRthin\ edges.
The graph in Figure~\ref{fig:double-biwheel-of-typeI}a is the smallest
member of each of these families,
although its drawing is suggestive of a double biwheel.
Figure~\ref{fig:typeI-families}a shows a double ladder.
A laddered biwheel is obtained
from the graph in Figure~\ref{fig:typeI-families}b by identifying
$u_1$~with~$u_2$, and likewise, $w_1$~with~$w_2$.

\bigskip
\noindent
{\sc Double Biwheels, Double Ladders and Laddered Biwheels of type~II.}
Let the graph~$G$ be obtained
from $K_1 \cup K_2$, by adding four edges,
namely, $\alpha:=a_1a_2$, \mbox{$\beta:=b_1b_2$},
$\alpha':=u_1w_2$ and $\beta':=w_1u_2$.
As before, we have three possibilities.
In the case in which $K_1$~and~$K_2$ are
both partial biwheels of order at least eight,
$G$ is a {\it double biwheel of type~II}. Likewise,
in the case in which $K_1$~and~$K_2$ are both ladders, $G$ is a
{\it double ladder of type~II}.
Finally, when one of $K_1$~and~$K_2$ is a partial
biwheel of order at least eight, and the other one is a ladder,
$G$ is a {\it laddered biwheel of type~II}.

\bigskip
A member of any of these families has two removable doubletons $R$ and $R'$,
and it is free of \sRthin\ edges.
However, a double biwheel or a laddered biwheel as shown in
Figure~\ref{fig:typeII-families} has \sthin{R'}\ edges;
these are the external spokes of a partial
biwheel of order at least eight as depicted by the bold lines in the figure.

\begin{figure}[!ht]
\centering
%\bigskip
%\hspace*{1in}
\begin{tikzpicture}[scale=0.62]

%\draw (-0.4,0)node[nodelabel]{$a_1$};
%\draw (-0.4,2)node[nodelabel]{$u_1$};
%\draw (2,-0.3)node[nodelabel]{$w_1$};
%\draw (2.2,2.22)node[nodelabel]{$b_1$};

%\draw (6.5,3.1)node[nodelabel]{$w_2$};
%\draw (6.5,-1.1)node[nodelabel]{$u_2$};
%\draw (4,0.7)node[nodelabel]{$b_2$};
%\draw (9,1.3)node[nodelabel]{$a_2$};

\draw (4.5,-2.75)node[above,nodelabel]{$\alpha$};
\draw (3.1,1.9)node[nodelabel]{$\beta$};
\draw (3.5,3.4)node[nodelabel]{$\alpha'$};
\draw (3.85,-0.8)node[nodelabel]{$\beta'$};

\draw[dashed] (0,0) to [out=270,in=180] (4.5,-1.8) to [out=0,in=270] (9,1);
\draw[dashed] (0,2) to [out=40,in=180] (6.5,2.8);

\draw[dashed] (2,2) -- (4,1);
\draw[dashed] (2,0) -- (6.5,-0.8);

\draw (0,0) -- (2,0)node{} -- (2,2)node[fill=black]{} -- (0,2)node[fill=black]{} -- (0,0)node{};
\draw (1,0)node[fill=black]{} -- (1,2)node{};

\draw (4,1) -- (9,1);

\draw[ultra thick] (4,1) -- (6.5,2.8);
\draw[ultra thick] (9,1) -- (6.5,-0.8);
\draw (4,1)node[fill=black]{} -- (6.5,2.8);
\draw (5,1)node{} -- (6.5,-0.8);
\draw (6,1)node[fill=black]{} -- (6.5,2.8);
\draw (7,1)node{} -- (6.5,-0.8);
\draw (8,1)node[fill=black]{} -- (6.5,2.8)node{};
\draw (9,1)node{} -- (6.5,-0.8)node[fill=black]{};

\draw (4.5,-3.6)node[nodelabel]{(a)};
\end{tikzpicture}
\hspace*{0.15in}
\begin{tikzpicture}[scale=0.62]

\draw (9.75,0.65)node[nodelabel]{$\alpha$};
\draw (9.75,-1.2)node[nodelabel]{$\alpha'$};
\draw (9.75,2.4)node[nodelabel]{$\beta'$};
\draw (9.75,4.35)node[nodelabel]{$\beta$};

\draw[dashed] (9,1) -- (10.5,1);
\draw[dashed] (6.5,2.8) -- (13,2.8);
\draw[dashed] (6.5,-0.8) -- (13,-0.8);
\draw[dashed] (4,1) to [out=90,in=180] (9.75,4) to [out=0,in=90] (15.5,1);

\draw (4,1) -- (9,1);

\draw[ultra thick] (4,1) -- (6.5,2.8);
\draw[ultra thick] (9,1) -- (6.5,-0.8);
\draw (4,1)node[fill=black]{} -- (6.5,2.8);
\draw (5,1)node{} -- (6.5,-0.8);
\draw (6,1)node[fill=black]{} -- (6.5,2.8);
\draw (7,1)node{} -- (6.5,-0.8);
\draw (8,1)node[fill=black]{} -- (6.5,2.8)node{};
\draw (9,1)node{} -- (6.5,-0.8)node[fill=black]{};

\draw (10.5,1) -- (15.5,1);

\draw[ultra thick] (10.5,1) -- (13,2.8);
\draw[ultra thick] (15.5,1) -- (13,-0.8);
\draw (10.5,1)node{} -- (13,2.8);
\draw (11.5,1)node[fill=black]{} -- (13,-0.8);
\draw (12.5,1)node{} -- (13,2.8);
\draw (13.5,1)node[fill=black]{} -- (13,-0.8);
\draw (14.5,1)node{} -- (13,2.8)node[fill=black]{};
\draw (15.5,1)node[fill=black]{} -- (13,-0.8)node{};

\draw (9.75,-2.7)node[nodelabel]{(b)};
\end{tikzpicture}
%\vspace*{-0.3in}
\caption{(a) A laddered biwheel of type II ;
(b) A double biwheel of type II}
\label{fig:typeII-families}
%\medskip
\end{figure}
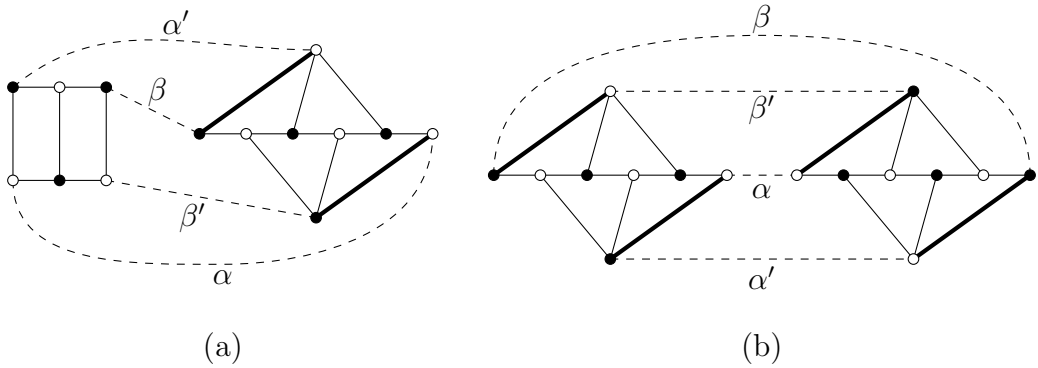

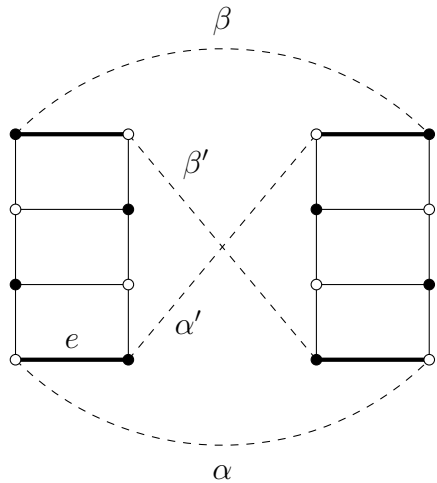
\begin{figure}[!ht]
\bigskip
\centering
\begin{tikzpicture}[scale=1]
\draw (0.75,0.25)node[nodelabel]{$e$};

\draw[ultra thick] (0,0) -- (1.5,0);
\draw[ultra thick] (0,3) -- (1.5,3);
\draw[ultra thick] (4,0) -- (5.5,0);
\draw[ultra thick] (4,3) -- (5.5,3);

\draw (2.75,-1.5)node[nodelabel]{$\alpha$};
\draw (2.75,4.5)node[nodelabel]{$\beta$};
\draw (2.3,0.5)node[nodelabel]{$\alpha'$};
\draw (2.4,2.6)node[nodelabel]{$\beta'$};

\draw[dashed] (1.5,0) -- (4,3);
\draw[dashed] (1.5,3) -- (4,0);
\draw[dashed] (0,0) to [out=315,in=225] (5.5,0);
\draw[dashed] (0,3) to [out=45,in=135] (5.5,3);

\draw (0,0) -- (1.5,0)node[fill=black]{} -- (1.5,3)node{} --
(0,3)node[fill=black]{} -- (0,0)node{};
\draw (0,1)node[fill=black]{} -- (1.5,1)node{};
\draw (0,2)node{} -- (1.5,2)node[fill=black]{};

\draw (4,0) -- (5.5,0)node{} -- (5.5,3)node[fill=black]{} --
(4,3)node{} -- (4,0)node[fill=black]{};
\draw (4,1)node{} -- (5.5,1)node[fill=black]{};
\draw (4,2)node[fill=black]{} -- (5.5,2)node{};
%\draw (2.75,-2.8)node[nodelabel]{(a)};
\end{tikzpicture}
%\vspace*{-0.1in}
\caption{A double ladder of type II}
\label{fig:double-ladder-of-typeII}
\end{figure}

On the other hand, a double ladder,
as shown in Figure~\ref{fig:double-ladder-of-typeII},
is free of \sthin{R'}\ edges as well.
This may be explained as follows. Every double ladder is cubic, and it
has precisely four strictly thin edges;
these are the external rungs of the two ladders,
depicted by bold lines in the figure.
One end of any such edge, say~$e$, 
is incident with an edge of~$R$
and the other end is incident with an edge of~$R'$;
since each end of~$e$ is cubic,
it is neither \Rcomp\ nor \comp{R'}.

\smallskip
Using a strengthening (see Theorem~\ref{thm:rank-plus-index}) of the \Rthin\ Edge
Theorem~(\ref{thm:Rthin-nb-bricks}), we
will prove that the seven
families described above
and four of the Norine-Thomas families are
the only simple {\Rbrick}s which are free
of \sRthin\ edges.
\begin{thm}
{\sc [Strictly \Rthin\ Edge Theorem]}
\label{thm:strictly-Rthin-nb-bricks}
Let $G$ be a simple \Rbrick. If $G$ is free of \sRthin\ edges
then $G$ belongs to one of the following infinite families:
\begin{multicols}{2}
\begin{enumerate}[(i)]
\item Truncated biwheels
\item Prisms
\item M{\"o}bius ladders
\item Staircases
\item Pseudo-biwheels
\item Double biwheels of type I
\item Double ladders of type I
\item Laddered biwheels of type I
\item Double biwheels of type II
\item Double ladders of type II
\item Laddered biwheels of type II
\end{enumerate}
\end{multicols}
\end{thm}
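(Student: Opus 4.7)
The plan is to mimic the Carvalho--Lucchesi--Murty strategy used to deduce Theorem~\ref{thm:nt-strictly-thin-bricks} from Theorem~\ref{thm:clm-thin-bricks}, but now with ``$R$-thin'' in place of ``thin'' throughout. Let $G$ be a simple $R$-brick that is free of strictly $R$-thin edges. By Theorem~\ref{thm:Rthin-nb-bricks} (or, more usefully, by the hinted strengthening Theorem~\ref{thm:rank-plus-index} which presumably produces a linear number of $R$-thin edges), $G$ has an $R$-thin edge $e$, and we may assume $G \notin \{K_4, \overline{C_6}\}$. By hypothesis $e$ is not strictly $R$-thin, and since $e$ is $R$-compatible and thin, the only way it can fail to be strictly thin is that the retract $J$ of $G-e$ contains a multiple edge. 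I would first catalogue the local configurations in $G$ around $e$ that force such a multiple edge: the deletion of $e$ creates at most two new degree-two vertices, and bicontracting them introduces a parallel edge exactly when $e$ has one (or two) cubic ends whose two remaining neighbours have a common neighbour, i.e.\ when $e$ lies in a short ``rung'' or ``spoke'' configuration. This gives a small 4- or 6-vertex gadget around $e$.

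Next I would propagate this local gadget. Each $R$-thin edge produces such a gadget, and the (strengthened) $R$-thin edge theorem should supply enough $R$-thin edges that the gadgets overlap and glue together. The natural objects one produces by iterating ``one cubic end whose two neighbours share a common third neighbour'' are precisely ladders and partial biwheels, as defined in Section~\ref{sec:NT-bricks}. So the first substantive step is: show that $G$ contains a \emph{maximal} bipartite subgraph $K$ which is either a ladder or a partial biwheel, and that every $R$-thin edge of $G$ is either an internal rung/spoke of $K$ or participates in extending $K$. Maximality, combined with the absence of strictly $R$-thin edges, will force every external rung/spoke of $K$ to be blocked from being strictly $R$-thin --- either it is not $R$-compatible (because it is incident with an edge of $R$ at a cubic end), or its deletion causes the retract to fail to be simple.

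The third and main step is to determine the structure of $G \setminus V(K)$ and the way $K$ is glued to it. Because $G$ is $3$-connected and bicritical (Theorem~\ref{thm:elp-bricks}), and because $R = \{\alpha,\beta\}$ is the unique or one of two removable doubletons, only very restricted attachments are possible at the ends $a_1,b_1$ (and hubs $u_1,w_1$, when $K$ is a partial biwheel) of $K$. Barrier/$2$-separation arguments à la Proposition~\ref{prop:characterizations-of-bipmcg} and the canonical partition (Theorem~\ref{thm:canonical-partition}) should show that $V(G)\setminus V(K)$ is either empty, a single pair of vertices $\{a_2,b_2\}$ joined to $K$ in a constrained way, or another copy $K_2$ of a ladder/partial biwheel joined to $K_1:=K$ either by identifying the two pairs of hubs (the ``type~I'' gluing) or by the four-edge joining $\{\alpha,\beta,\alpha',\beta'\}$ (the ``type~II'' gluing). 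Checking that $R$ (and possibly $R'$) really is a removable doubleton, and that no $R$-thin edge in the resulting graph is strictly thin, then exactly reproduces the four surviving Norine--Thomas families (truncated biwheels, prisms, Möbius ladders, staircases) together with the seven new families $(v)$--$(xi)$.

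The main obstacle will be the case analysis in this last step: showing that \emph{no} other way of attaching $K_2$, or of attaching a small cap $\{a_2,b_2\}$, can simultaneously produce a simple $R$-brick and avoid creating a strictly $R$-thin edge. In particular, one has to rule out ``mixed'' gluings (e.g.\ type~I on one side and type~II on the other) and asymmetric caps, and one has to verify for each of the eleven families that the candidate $R$-thin edges really all fail strict $R$-thinness --- the key mechanism being that each such edge either has a cubic end incident with $R$ (hence is not $R$-compatible), or its removal forces a bicontraction that yields a multi-edge precisely because of the ladder/biwheel structure. The Norine--Thomas classification (Theorem~\ref{thm:nt-strictly-thin-bricks}) supplies the ``bipartite-ish'' skeleton but says nothing about compatibility with $R$, so the compatibility bookkeeping, rather than the structural extraction of ladders and biwheels, is where I expect the proof to be delicate.
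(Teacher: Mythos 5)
Your plan follows essentially the same route as the paper's proof: grow the local multiple-edge gadget around a non-strictly-$R$-thin \Rthin\ edge into a ladder or partial biwheel (the paper's $R$-biwheel and $R$-ladder Theorems~\ref{thm:Rbiwheel-configuration} and~\ref{thm:Rladder-configuration}), then classify how the rest of the brick attaches to one or two such {\Rconf}s, with the type~I/type~II gluings and the two-vertex caps yielding exactly the eleven families. The one place where the paper's bookkeeping is more specific than your sketch is the exhaustion step: rather than analysing $G-V(K)$ directly by barrier arguments, it repeatedly applies Theorem~\ref{thm:rank-plus-index} to extract a further \Rthin\ edge (hence a further \Rconf) outside any configuration already found, and terminates because the non-free corners of every \Rconf\ lie in $V(R)$, so at most two configurations can coexist.
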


We present a proof of the above theorem in
Section \ref{sec:proof-of-strictly-Rthin-edge-theorem}.
As mentioned earlier, our proof is inspired by the proof of the
Strictly Thin Edge Theorem (\ref{thm:nt-strictly-thin-bricks})
given by Carvalho et al. \cite{clm08}, and
uses several of their results and techniques.

\smallskip
We shall denote by $\mathcal{N}$ the
union of all of the eleven families which appear in
the statement of Theorem~\ref{thm:strictly-Rthin-nb-bricks}.
The following is an immediate consequence.

\begin{thm}
\label{thm:simple-nb-brick-reduction}
Given any simple \Rbrick~$G$,
there exists a sequence $G_1, G_2, \dots, G_k$ of simple {\Rbrick}s such that:
\begin{enumerate}[(i)]
\item $G_1 \in \mathcal{N}$,
\item $G_k:=G$, and
\item for $2 \leq i \leq k$, there exists an \Rthin\ edge~$e_i$
of~$G_i$ such that $G_{i-1}$ is the retract of~$G_i - e_i$.
\end{enumerate}
\end{thm}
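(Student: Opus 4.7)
The plan is to deduce this by a straightforward induction, using Theorem~\ref{thm:strictly-Rthin-nb-bricks} as the engine. Since the statement is presented as an ``immediate consequence,'' the substantive content is entirely contained in the Strictly \Rthin\ Edge Theorem, and this proof is essentially a bookkeeping argument that iterates the existence of a \sRthin\ edge until we land in $\mathcal{N}$. I will proceed by induction on $|E(G)|$ (or equivalently on $|V(G)|+|E(G)|$); the reason for choosing edges rather than vertices is that a single bicontraction leaves the vertex count merely nonincreasing, whereas each deletion-plus-bicontraction step strictly reduces the edge count.

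For the base case, suppose $G \in \mathcal{N}$. Then the sequence consisting of the single term $G_1 := G$ trivially satisfies conditions~(i)--(iii) (condition (iii) being vacuous for $k=1$). For the inductive step, assume $G \notin \mathcal{N}$. By the contrapositive of Theorem~\ref{thm:strictly-Rthin-nb-bricks}, $G$ has a \sRthin\ edge, say $e$. By the definition recalled in Section~\ref{sec:Rthin-edges}, the retract $J$ of $G-e$ is a simple \Rbrick; furthermore $|E(J)| \leq |E(G)|-1$, since deleting $e$ removes one edge and each subsequent bicontraction replaces two edges by one (so it can only decrease the edge count further). Hence the induction hypothesis applies to $J$, yielding a sequence $J_1, J_2, \dots, J_{k-1}$ of simple {\Rbrick}s with $J_1 \in \mathcal{N}$, $J_{k-1} = J$, and an \Rthin\ edge of $J_i$ whose retract-after-deletion is $J_{i-1}$ for each $2 \leq i \leq k-1$.

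To finish, I set $G_i := J_i$ for $1 \leq i \leq k-1$, set $G_k := G$, and let $e_k := e$. By construction $G_1 \in \mathcal{N}$ and $G_k = G$; the \Rthin ness condition in~(iii) holds for $i < k$ by the inductive hypothesis, and for $i = k$ it holds because every \sRthin\ edge is \Rthin\ by definition, with $J = G_{k-1}$ being the retract of $G_k - e_k$. This completes the induction.

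I do not foresee a genuine obstacle here; the only point that deserves a moment's care is confirming that the induction measure strictly decreases. That is immediate from the edge-count observation above, and it also matches the situation in Norine--Thomas's Theorem~\ref{thm:nt-simple-brick-reduction}, which is derived from Theorem~\ref{thm:nt-strictly-thin-bricks} by precisely the same template. The whole effort of the paper is thus concentrated in establishing Theorem~\ref{thm:strictly-Rthin-nb-bricks}; once that is in hand, this corollary follows by the short induction outlined above.
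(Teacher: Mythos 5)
Your proposal is correct and is exactly the argument the paper intends: the paper offers no proof of Theorem~\ref{thm:simple-nb-brick-reduction}, presenting it as an ``immediate consequence'' of Theorem~\ref{thm:strictly-Rthin-nb-bricks}, and your induction on the edge count (noting that a strictly \Rthin\ edge yields a simple \Rbrick\ retract with strictly fewer edges) is the standard way to make that immediacy explicit. The only nitpick is that a bicontraction deletes two edges outright rather than ``replacing two edges by one,'' but this only strengthens the inequality you need.
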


In other words, every simple \Rbrick\ can be generated from some 
member of~$\mathcal{N}$ by repeated application of the expansion operations
such that at each step we have a simple \Rbrick.

\smallskip
Finally, recall that members of three of the aforementioned families
do have \sthin{R'}\ edges,
where $R':=\{\alpha',\beta'\}$ in our description of these families;
these are pseudo-biwheels, double biwheels of type~II and
laddered biwheels of type~II.
In view of this, we say that a strictly thin edge~$e$ of a simple \nb\ brick~$G$
is {\it compatible} if it is \Rcomp\ for some removable doubleton~$R$.
We thus have the following theorem (with eight infinite families)
alluded to in the abstract.

\begin{thm}
\label{thm:compatible-strictly-thin-nb-bricks}
Let $G$ be a simple near-bipartite brick.
If $G$ is free of compatible strictly thin edges then $G$ belongs
to one of the following infinite families:
\begin{multicols}{2}
\begin{enumerate}[(i)]
\item Truncated biwheels
\item Prisms
\item M{\"o}bius ladders
\item Staircases
\item Double biwheels of type I
\item Double ladders of type I
\item Laddered biwheels of type I
\item Double ladders of type II
\end{enumerate}
\end{multicols}
\end{thm}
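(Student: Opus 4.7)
The plan is to derive Theorem~\ref{thm:compatible-strictly-thin-nb-bricks} as a direct corollary of Theorem~\ref{thm:strictly-Rthin-nb-bricks}. Since $G$ is a simple \nb\ brick it possesses at least one removable doubleton; fix any such~$R$, so that $G$ becomes a simple \Rbrick. Every \sRthin\ edge is by definition \Rcomp\ and strictly thin, and hence is a compatible strictly thin edge; therefore the hypothesis forces $G$ to have no \sRthin\ edges. Theorem~\ref{thm:strictly-Rthin-nb-bricks} then places $G$ in one of the eleven families (i)--(xi) listed there.

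The eight families in the target theorem are precisely these eleven with three removed: pseudo-biwheels, double biwheels of type~II, and laddered biwheels of type~II. It therefore suffices to show that every member of each of these three extra families admits a compatible strictly thin edge, contradicting the hypothesis. The definitions in Section~\ref{sec:new-families} already supply, in each of these three families, a second removable doubleton $R':=\{\alpha',\beta'\}$ different from~$R$; so the task is to exhibit, for each family, an edge $e$ that is both strictly thin and \comp{R'}.

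The candidates are the external spokes of the partial-biwheel component(s), drawn in bold in Figures~\ref{fig:pseudo-biwheel} and~\ref{fig:typeII-families}: the two external spokes $a_1u_1$ and $b_1w_1$ of~$K$ for a pseudo-biwheel, and the external spokes of the partial-biwheel half (or halves) for the type~II families. For each such~$e$, three things must be verified: (a) $e$ is \comp{R'}, which via Proposition~\ref{prop:characterizations-of-bipmcg} reduces to checking that removing an external spoke from the bipartite graph $G-R'$ preserves the Hall-type neighbourhood condition --- it merely shortens the partial-biwheel backbone; (b) the retract $J$ of $G-e$ is simple; and (c) $J$ is a brick, which by Theorem~\ref{thm:elp-bricks} reduces to $3$-connectedness and bicriticality. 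In the generic case, where the partial biwheel in question has order strictly greater than eight, the retract~$J$ is easily identified as a smaller simple \nb\ brick that retains~$R'$ as a removable doubleton, and (b), (c) follow from direct inspection of the construction.

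The main obstacle is the handful of base cases in which a partial biwheel in~$G$ has order exactly eight. Here the bicontraction can degenerate the partial biwheel into a ladder, so that, for example, a laddered biwheel of type~II collapses into a double ladder of type~II, and a pseudo-biwheel may collapse into a staircase. In each such small exceptional instance one must verify directly that the retract is still a simple \Rbrick\ with removable doubleton~$R'$. Once these finite boundary checks are completed, any $G$ in one of the three extra families contains a compatible strictly thin edge, contradicting the hypothesis; hence $G$ must belong to one of the eight listed families, completing the proof.
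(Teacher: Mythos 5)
Your proposal is correct and follows essentially the same route as the paper: the paper also obtains Theorem~\ref{thm:compatible-strictly-thin-nb-bricks} directly from Theorem~\ref{thm:strictly-Rthin-nb-bricks}, discarding pseudo-biwheels, double biwheels of type~II and laddered biwheels of type~II precisely because their external spokes of the partial-biwheel component(s) are strictly $R'$-thin, hence compatible strictly thin. The only difference is one of emphasis --- the paper asserts these verifications (including the order-eight boundary cases you flag) as routine in Section~\ref{sec:new-families} rather than spelling them out.
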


%\bigskip
Four of the families in the above theorem are
Norine-Thomas families; these are free of strictly thin edges.
As we did in Figure~\ref{fig:double-biwheel-of-typeI},
it may be verified that if $G$ is a member of any of the remaining four
families and $e$ is any strictly thin edge of~$G$ then the retract~$J$ of~$G-e$
is not near-bipartite.
(For example, consider the graph~$G$ and edge~$e$ shown in
Figure~\ref{fig:double-ladder-of-typeII}, and let $J$ be the retract of~$G-e$.
It can be checked that $J$ has four odd cycles, $C_0, C_1, C_2$ and $C_3$,
such that $C_1, C_2$ and $C_3$ are edge-disjoint with $C_0$, and furthermore,
there is no single edge which belongs to all three of them.)

%\smallskip
%In Section~\ref{sec:removable-edges}, we examine properties of \nb\ graphs that are relevant to our proof of Theorem~\ref{thm:strictly-Rthin-nb-bricks}.
%In Section~\ref{sec:generating-nb-bricks}, we prove the \Rthin\ Edge Theorem (\ref{thm:Rthin-nb-bricks});
%this theorem and its proof also appear in the Ph.D. thesis of the author \cite[Chapter 5]{koth16}.
\bigskip
For the rest of this paper, our goal is to present a complete proof of
Theorem~\ref{thm:strictly-Rthin-nb-bricks}.
This result and its proof also appear in the Ph.D. thesis of the first author \cite[Chapter 6]{koth16}.

%\newpage
\noindent
{\bf Organization of this paper:}

\smallskip
\noindent
In Section~\ref{sec:R-configurations}, we define two special types of subgraphs, namely, an `\Rbiwheel' and an `\Rladder',
we state a few theorems related to these configurations without proofs,
and we conclude
with a proof sketch of Theorem~\ref{thm:strictly-Rthin-nb-bricks}. In Sections~\ref{sec:R-thin} and \ref{sec:properties-of-Rconfigurations}, we prove the
technical results and theorems that are stated in Section~\ref{sec:R-configurations}. Finally, in Section~\ref{sec:proof-of-strictly-Rthin-edge-theorem},
we provide a complete proof
of Theorem~\ref{thm:strictly-Rthin-nb-bricks}.

\section{{\Rconf}s}\label{sec:R-configurations}
\label{sec:R-configurations}

Recall the definitions of ladders and partial biwheels from
Section~\ref{sec:NT-bricks}.
In our descriptions of the eleven families that appear in the statement of Theorem~\ref{thm:strictly-Rthin-nb-bricks},
we constructed their members using either one or two
disjoint bipartite matching covered
graphs, each of which is either a ladder or a partial biwheel, and thereafter,
adding a few vertices and/or edges and possibly identifying two pairs of vertices.
As we will see,
these constructions are indicative of how these graphs appear
in our proof of Theorem~\ref{thm:strictly-Rthin-nb-bricks}.
In this section, we will define two special types of subgraphs,
namely, an `\Rbiwheel' and an `\Rladder';
we will conclude with a proof sketch of Theorem~\ref{thm:strictly-Rthin-nb-bricks}.

\smallskip
For the rest of this paper, we adopt the following notational and figure conventions.

\begin{Not}
\label{Not:Rbrick-doubleton}
For a simple \Rbrick~$G$, we shall denote by $H[A,B]$ the underlying bipartite
graph $G-R$. We let $\alpha$ and $\beta$ denote the constituent edges
of~$R$, and we adopt the convention that $\alpha:=a_1a_2$
has both ends in~$A$, whereas $\beta:=b_1b_2$ has both
ends in~$B$. We denote by~$V(R)$ the set~$\{a_1,a_2,b_1,b_2\}$.
Furthermore, in all of the figures, the hollow vertices are in~$A$,
and the solid vertices are in~$B$.
\end{Not}

We will also adopt the following notational conventions for
a subgraph which is either a ladder or a partial biwheel.

\begin{Not}
\label{Not:biwheel-ladder-convention}
When referring to a subgraph~$K$ of~$H$,
such that $K$ is either a ladder
or a partial biwheel with
\external\ $au$ and $bw$, we adopt the convention that
$a,w \in A$ and $b, u \in B$; furthermore, when $K$ is a partial biwheel,
$u$~and~$w$ shall denote its hubs; as shown in
Figures~\ref{fig:Rbiwheel-configuration}~and~\ref{fig:Rladder-configuration}.
(We may also use subscript
notation, such as $a_iu_i$ and $b_iw_i$ where $i$ is an integer, and this
convention extends naturally.)
\end{Not}

%\vspace*{-0.15in}
\subsection{{\Rbiwheel}s}
\label{sec:Rbiwheel-configurations}

Let $K$ be a subgraph of $H$ such that~$K$ is a partial biwheel
with external spokes $au$~and~$bw$; see Figure~\ref{fig:Rbiwheel-configuration}.
We say that $K$ is an {\it \Rbiwheel} of~$G$
if it satisfies the following conditions:

\begin{enumerate}[{\it (i)}]
\item in~$G$, the hubs $u$ and $w$ are both noncubic, and
every other vertex of~$K$ is cubic,
\item the ends of~$K$, namely $a$~and~$b$, both lie in~$V(R)$, and,
\item in~$G$, every internal spoke of~$K$ is an \Rthin\ edge whose index is one.
\end{enumerate}

\begin{figure}[!ht]
\centering
%\smallskip
%\vspace*{-0.1in}
%\bigskip
\begin{tikzpicture}[scale=0.8]
\draw (1,0) -- (6,0);

\draw (1,0) -- (3.5,2);
\draw (2,0) -- (3.5,-2);
\draw (3,0) -- (3.5,2);
\draw (4,0) -- (3.5,-2);
\draw (5,0) -- (3.5,2);
\draw (6,0) -- (3.5,-2);

\draw (1,0) node{}node[left,nodelabel]{$a \in V(R)$};
\draw (2,0) node[fill=black]{};
\draw (3,0) node{};
\draw (4,0) node[fill=black]{};
\draw (5,0) node{};
\draw (6,0) node[fill=black]{}node[right,nodelabel]{$b \in V(R)$};
\draw (3.5,2) node[fill=black]{}node[above,nodelabel]{$u$};
\draw (3.5,-2) node{}node[below,nodelabel]{$w$};

\end{tikzpicture}
\vspace*{-0.1in}
\caption{An \Rbiwheel; in~$G$, the free corners (hubs) $u$ and $w$ are
noncubic, and every other vertex is cubic}
\label{fig:Rbiwheel-configuration}
\bigskip
\end{figure}
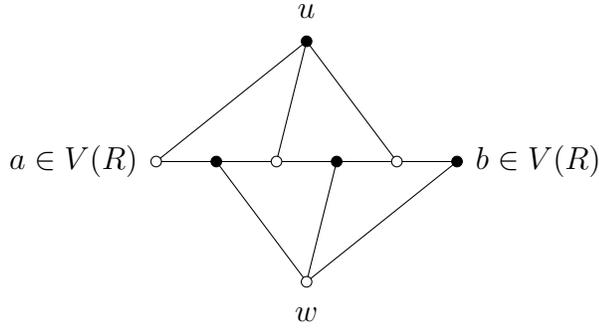

A pseudo-biwheel, as shown in Figure~\ref{fig:pseudo-biwheel-Rconfiguration},
has two removable doubletons~$R:=\{\alpha,\beta\}$
and $R':=\{\alpha',\beta'\}$. The subgraph~$K$, depicted by solid lines,
is an $R$-biwheel configuration. (To see this, note that every internal
spoke of~$K$ is an \Rthin\ edge of index one.)
However, $K$ is not an $R'$-biwheel configuration
because its ends $a$ and $b$ are not incident with edges of~$R'$.

\begin{figure}[!ht]
\bigskip
\centering
\begin{tikzpicture}[scale=1]

\draw (4,0.7)node[nodelabel]{$a$};
\draw (9,1.3)node[nodelabel]{$b$};
\draw (6.5,3.1)node[nodelabel]{$u$};
\draw (6.5,-1.1)node[nodelabel]{$w$};

\draw (3,0.77)node[nodelabel]{$\alpha$};
\draw (10,1.25)node[,nodelabel]{$\beta$};
\draw (4,2.15)node[nodelabel]{$\alpha'$};
\draw (9,-0.2)node[nodelabel]{$\beta'$};

\draw[dashed] (2,1) to [out=80,in=180] (6.5,4) to [out=0,in=100] (11,1);

\draw[dashed] (2,1) -- (4,1);
\draw[dashed] (9,1) -- (11,1);
\draw[dashed] (6.5,2.8) -- (2,1);
\draw[dashed] (6.5,-0.8) -- (11,1);

\draw (2,1)node{};
\draw (11,1)node[fill=black]{};

\draw (4,1) -- (9,1);

\draw (4,1)node{} -- (6.5,2.8);
\draw (5,1)node[fill=black]{} -- (6.5,-0.8);
\draw (6,1)node{} -- (6.5,2.8);
\draw (7,1)node[fill=black]{} -- (6.5,-0.8);
\draw (8,1)node{} -- (6.5,2.8)node[fill=black]{};
\draw (9,1)node[fill=black]{} -- (6.5,-0.8)node{};

\end{tikzpicture}
\vspace*{-0.1in}
\caption{A pseudo-biwheel has only one $R$-biwheel configuration}
\label{fig:pseudo-biwheel-Rconfiguration}
\end{figure}
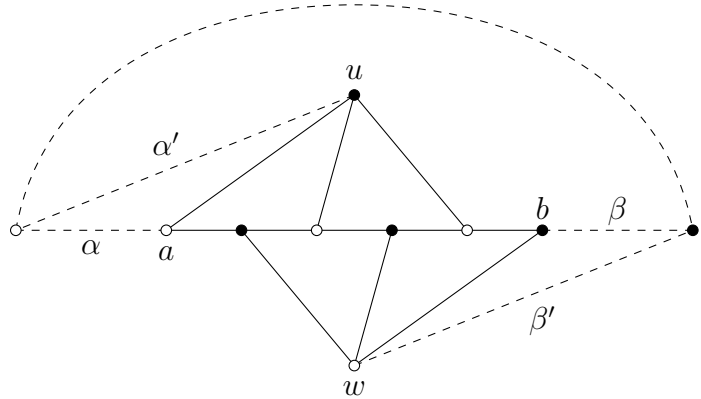

%\vspace*{-0.1in}
\subsection{{\Rladder}s}
\label{sec:Rladder-configurations}

Let $K$ be a subgraph of~$H$ such that $K$ is a ladder with external rungs
$au$ and $bw$; see Figure~\ref{fig:Rladder-configuration}.
We say that $K$ is an {\it \Rladder} of~$G$ if it satisfies the following conditions:

\begin{enumerate}[{\it (i)}]
\item in~$G$, every vertex of~$K$, except possibly for~$u$ and $w$, is cubic,
\item the vertices $a$ and $b$ both lie in~$V(R)$, and,
\item in~$G$, every internal rung of~$K$ is an \Rthin\ edge whose index is two.
\end{enumerate}

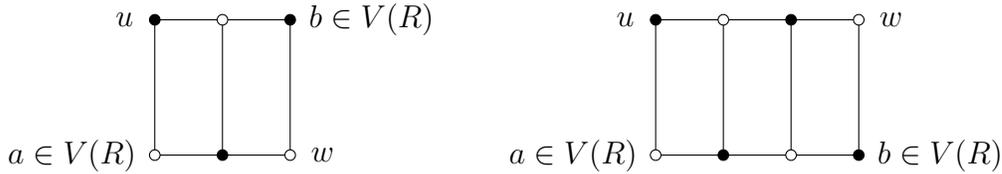
\begin{figure}[!ht]
\centering
\vspace*{-0.23in}
\begin{tikzpicture}[scale=0.9]

\draw (1,0) -- (1,2);
\draw (2,0) -- (2,2);
\draw (3,0) -- (3,2);

\draw (1,0) -- (3,0);
\draw (1,2) -- (3,2);

\draw (1,0) node{}node[left,nodelabel]{$a \in V(R)$};
\draw (1,2) node[fill=black]{}node[left,nodelabel]{$u$};
\draw (2,2) node{};
\draw (2,0) node[fill=black]{};
\draw (3,0) node{}node[right,nodelabel]{$w$};
\draw (3,2) node[fill=black]{}node[right,nodelabel]{$b \in V(R)$};

\end{tikzpicture}
\hspace*{0.1in}
\begin{tikzpicture}[scale=0.9]

\draw (1,0) -- (1,2);
\draw (2,0) -- (2,2);
\draw (3,0) -- (3,2);
\draw (4,0) -- (4,2);

\draw (1,0) -- (4,0);
\draw (1,2) -- (4,2);

\draw (1,0) node{}node[left,nodelabel]{$a \in V(R)$};
\draw (1,2) node[fill=black]{}node[left,nodelabel]{$u$};
\draw (2,2) node{};
\draw (2,0) node[fill=black]{};
\draw (3,0) node{};
\draw (3,2) node[fill=black]{};
\draw (4,2) node{}node[right,nodelabel]{$w$};
\draw (4,0) node[fill=black]{}node[right,nodelabel]{$b \in V(R)$};

\end{tikzpicture}
\vspace*{-0.2in}
\caption{Two {\Rladder}s of different parities; each vertex, except possibly for the free corners $u$~and~$w$, is cubic in~$G$}
\label{fig:Rladder-configuration}
\bigskip
\end{figure}

A prism of order~$n$ has $\frac{n}{2}$ removable doubletons.
If $R:=\{\alpha,\beta\}$
is a fixed removable doubleton of a prism~$G$ of order ten or more,
then the graph~$H=G-R$ is itself an
\Rladder, as shown in Figure~\ref{fig:prism-Rconfiguration}.
(An analogous statement holds for M{\"o}bius ladders of order eight or more.)
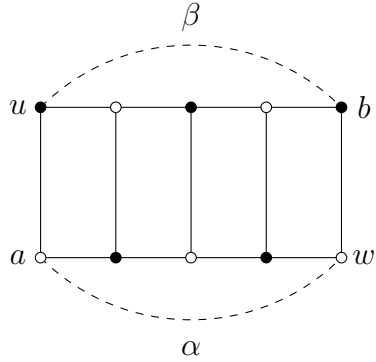
\begin{figure}[!ht]
\centering
%\medskip
\begin{tikzpicture}

\draw (-0.3,0)node[nodelabel]{$a$};
\draw (4.3,0)node[nodelabel]{$w$};
\draw (-0.3,2)node[nodelabel]{$u$};
\draw (4.3,2)node[nodelabel]{$b$};

\draw (2,3.2)node[nodelabel]{$\beta$};
\draw (2,-1.2)node[nodelabel]{$\alpha$};

\draw[dashed] (0,0) to [out=315,in=225] (4,0);
\draw[dashed] (0,2) to [out=45,in=135] (4,2);

\draw (0,0) -- (4,0)node{} -- (4,2)node[fill=black]{} -- (0,2)node[fill=black]{} -- (0,0)node{};
\draw (1,0)node[fill=black]{} -- (1,2)node{};
\draw (2,0)node{} -- (2,2)node[fill=black]{};
\draw (3,0)node[fill=black]{} -- (3,2)node{};

\end{tikzpicture}
\vspace*{-0.15in}
\caption{A prism has only one $R$-ladder configuration}
\label{fig:prism-Rconfiguration}
\end{figure}

\subsection{Corners, Rungs and Spokes}

We shall often need the flexibility of referring to a subgraph~$K$
which is either an \Rladder\ or an \Rbiwheel,
and in this case, we simply write that $K$ is an {\it \Rconf}.
Additionally, we may also state that
$K$ has \external\ $au$~and~$bw$ (possibly with subscript notation);
in this case,
we implicitly adopt the conventions stated in
Notation~\ref{Not:biwheel-ladder-convention},
and we refer to $a, u, b$ and $w$ as the {\it corners} of~$K$.
Furthermore, as shown in
Figures~\ref{fig:Rbiwheel-configuration} and \ref{fig:Rladder-configuration},
we will assume that $a, b \in V(R)$.
%and these will be called the {\it {\Rcorner}s} of~$K$.
We refer to $u$ and $w$ as the
{\it free corners} of~$K$; these may lie in~$V(R)$
as in Figure~\ref{fig:prism-Rconfiguration}, or they may not lie in~$V(R)$
as in Figure~\ref{fig:pseudo-biwheel-Rconfiguration}.
Observe that any vertex of~$K$, which is not a corner, does not lie in~$V(R)$.

\smallskip
For any two distinct rungs/spokes of an \Rconf~$K$, say $e$ and $f$, we say that
$e$~and~$f$ are {\it consecutive}, or equivalently,
that $e$ is {\it consecutive with}~$f$, whenever an end of~$e$ which is not
a free corner is adjacent with an end of~$f$ which is also not a free corner.
Clearly, each internal rung (spoke)
is consecutive with two rungs (spokes); whereas each external rung (spoke)
is consecutive with only one rung (spoke) and the latter is internal.
Now, let $e$ denote an internal rung (spoke) of~$K$, and let $f$~and~$g$
denote the two rungs (spokes) with which $e$ is consecutive. By definition,
$e$ is an \Rthin\ edge of~$G$. Observe that $f$~and~$g$
are multiple edges in the retract of~$G-e$; consequently, $e$ is not strictly thin.

\subsection{Two distinct $R$-configurations}

\begin{figure}[!ht]
\centering
\smallskip
\begin{tikzpicture}[scale=1.1]

\draw (-0.4,0)node[nodelabel]{$a_1$};
\draw (-0.4,2)node[nodelabel]{$u_1$};
\draw (2,-0.3)node[nodelabel]{$w_1$};
\draw (2.2,2.22)node[nodelabel]{$b_1$};

\draw (6.5,3.1)node[nodelabel]{$w_2$};
\draw (6.5,-1.1)node[nodelabel]{$u_2$};
\draw (4,0.7)node[nodelabel]{$b_2$};
\draw (9,1.3)node[nodelabel]{$a_2$};

\draw (4.5,-1.9)node[above,nodelabel]{$\alpha$};
\draw (3.1,1.77)node[nodelabel]{$\beta$};
\draw (3.5,2.7)node[nodelabel]{$\alpha'$};
\draw (3.85,-0.6)node[nodelabel]{$\beta'$};

\draw[dashed] (0,0) to [out=270,in=180] (4.5,-1.8) to [out=0,in=270] (9,1);
\draw[dashed] (0,2) to [out=40,in=180] (6.5,2.8);

\draw[dashed] (2,2) -- (4,1);
\draw[dashed] (2,0) -- (6.5,-0.8);

\draw (0,0) -- (2,0)node{} -- (2,2)node[fill=black]{} -- (0,2)node[fill=black]{} -- (0,0)node{};
\draw (1,0)node[fill=black]{} -- (1,2)node{};

\draw (4,1) -- (9,1);

\draw (4,1)node[fill=black]{} -- (6.5,2.8);
\draw (5,1)node{} -- (6.5,-0.8);
\draw (6,1)node[fill=black]{} -- (6.5,2.8);
\draw (7,1)node{} -- (6.5,-0.8);
\draw (8,1)node[fill=black]{} -- (6.5,2.8)node{};
\draw (9,1)node{} -- (6.5,-0.8)node[fill=black]{};

\end{tikzpicture}
%\vspace*{-0.05in}
\caption{A laddered biwheel of type~II has two vertex-disjoint {\Rconf}s}
\label{fig:laddered-biwheel-typeII-Rconfigurations}
\smallskip
\end{figure}

A laddered biwheel of type~II,
as shown in Figure~\ref{fig:laddered-biwheel-typeII-Rconfigurations},
has two removable doubletons \mbox{$R:=\{\alpha,\beta\}$} and
\mbox{$R':=\{\alpha',\beta'\}$}.
Observe that the graph obtained by removing the edge set~$R \cup R'$
has two connected components, of which one is an \Rladder\ with
external rungs $a_1u_1$ and $b_1w_1$, and the other is an
\Rbiwheel\ with external spokes $a_2u_2$ and $b_2w_2$.
In this case, the two $R$-configurations are vertex-disjoint.

\smallskip
On the other hand, a double ladder of type~I, as shown
in Figure~\ref{fig:double-ladder-typeI-Rconfigurations},
has only one removable doubleton~$R:=\{\alpha,\beta\}$
and it has two {\Rladder}s which share their free corners
$u_1$~and~$w_1$, but are otherwise vertex-disjoint.
One of these is depicted by dashed lines, and it has
external rungs $a_1u_1$~and~$b_1w_1$, whereas the other one
has external rungs $a_2u_1$~and~$b_2w_1$.

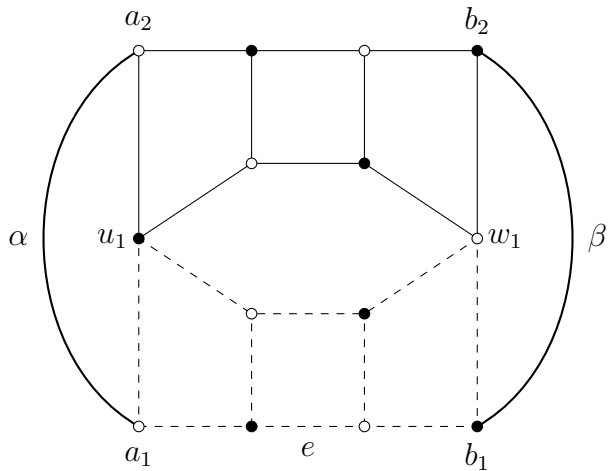
\begin{figure}[!ht]
\medskip
\centering
\begin{tikzpicture}[scale=1]

\draw (2.25,-0.3)node[nodelabel]{$e$};

\draw (0,5.4)node[nodelabel]{$a_2$};
\draw (4.5,5.4)node[nodelabel]{$b_2$};

\draw (0,-0.4)node[nodelabel]{$a_1$};
\draw (4.5,-0.4)node[nodelabel]{$b_1$};
\draw (-0.35,2.5)node[nodelabel]{$u_1$};
\draw (4.87,2.5)node[nodelabel]{$w_1$};

\draw (-1.6,2.5)node[nodelabel]{$\alpha$};
\draw (6.1,2.5)node[nodelabel]{$\beta$};

\draw[thick] (0,0) to [out=150,in=210] (0,5);
\draw[thick] (4.5,0) to [out=30,in=330] (4.5,5);

\draw (0,5)node{} -- (4.5,5)node[fill=black]{} -- (4.5,2.5) -- (3,3.5) -- (1.5,3.5) -- (0,2.5) -- (0,5);
\draw (1.5,5)node[fill=black]{} -- (1.5,3.5)node{};
\draw (3,5)node{} -- (3,3.5)node[fill=black]{};

\draw[dashed] (0,0) -- (4.5,0) -- (4.5,2.5) -- (3,1.5) -- (1.5,1.5) -- (0,2.5) -- (0,0);
\draw[dashed] (1.5,0) -- (1.5,1.5);
\draw[dashed] (3,0) -- (3,1.5);

\draw (0,0)node{};
\draw (4.5,0)node[fill=black]{};
\draw (4.5,2.5)node{};
\draw (0,2.5)node[fill=black]{};
\draw (1.5,0)node[fill=black]{};
\draw (1.5,1.5)node{};
\draw (3,0)node{};
\draw (3,1.5)node[fill=black]{};

\end{tikzpicture}
\vspace*{-0.1in}
\caption{A double ladder of type I has two {\Rconf}s which share their free corners
but are otherwise vertex-disjoint}
\label{fig:double-ladder-typeI-Rconfigurations}
\bigskip
\end{figure}

The reader is advised to check that members of all of the eleven families
that appear in Theorem~\ref{thm:strictly-Rthin-nb-bricks},
except for $K_4$ and $\overline{C_6}$,
have either one or two $R$-configurations for an appropriately
chosen removable doubleton~$R$.
(The choice of~$R$ matters only in the case of three families, namely,
pseudo-biwheels, double biwheels of Type~II and laddered biwheels of Type~II.
Figure~\ref{fig:pseudo-biwheel-Rconfiguration}
shows a pseudo-biwheel and its two removable doubletons.)

\smallskip
In order to sketch a proof of Theorem~\ref{thm:strictly-Rthin-nb-bricks},
we will require a few results which are stated next; their proofs will
appear in later sections.
In particular,
the following proposition states that two distinct {\Rconf}s
are either vertex-disjoint, or they have the same free corners but are otherwise
vertex-disjoint; its proof appears in
Section~\ref{sec:proof-Rconfigurations-almost-disjoint}.

\begin{prop}
{\sc [$R$-configurations are Almost Disjoint]}
\label{prop:Rconfigurations-almost-disjoint}
Let $G$ be a simple \Rbrick, and let $K_1$
denote an \Rconf\ with free corners $u_1$ and $w_1$.
If $K_2$ is any \Rconf\ distinct from~$K_1$,
then precisely one of the following statements holds:
\begin{enumerate}[(i)]
\item $K_1$ and $K_2$ are vertex-disjoint, or,
\item $u_1$ and $w_1$ are the free corners of~$K_2$,
and $K_2$ is otherwise vertex-disjoint with~$K_1$.
\end{enumerate}
\end{prop}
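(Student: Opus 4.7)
My plan is to exploit two structural features of $R$-configurations: every non-free-corner vertex of $K_i$ is cubic in $G$ with its three $G$-neighbours determined entirely by its role in $K_i$ (either three $K_i$-edges for an internal vertex, or two $K_i$-edges together with the incident $R$-edge for an $R$-corner); and the free corners of an $R$-biwheel are its hubs, which are noncubic, whereas the free corners of an $R$-ladder may have any degree $\geq 2$. These two facts sharply constrain the role that a shared vertex $v \in V(K_1)\cap V(K_2)$ can play in each configuration.

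The first and main step is to rule out that a shared vertex $v$ is a non-free-corner of either configuration. Assume for contradiction that $v$ is a non-free-corner of $K_1$, so $\deg_G(v)=3$ and the set $N_G(v)$ is pinned down by $K_1$. If $v$ is also a non-free-corner of $K_2$, then $N_G(v)$ is likewise pinned down by $K_2$, and the two local pictures at $v$ must agree (an edge-count on $v$ also rules out the mixed sub-case `$R$-corner of $K_1$ versus internal of $K_2$', since $K_2\subseteq H=G-R$ cannot absorb the $R$-edge at $v$); propagation along the path/rung structure of $K_1$, each successive internal neighbour being itself cubic and so again in $K_2$ by the same edge-matching, then forces $K_1=K_2$, contradicting $K_1\ne K_2$. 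If instead $v$ is a free corner of $K_2$, then because $v$ is cubic, $K_2$ cannot be an $R$-biwheel; so $K_2$ is an $R$-ladder, and the prescribed $K_2$-neighbours of a free corner of an $R$-ladder (in particular an $R$-corner of $K_2$, which lies in $V(R)$) must be $K_1$-neighbours of $v$, producing a further shared vertex that is a non-free-corner of both configurations, to which the previous sub-case applies and again yields $K_1=K_2$.

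By the symmetric argument, any shared vertex is a free corner of \emph{both} $K_1$ and $K_2$, so $V(K_1)\cap V(K_2) \subseteq \{u_1,w_1\}\cap\{u_2,w_2\}$. Since $u_i\in B$ and $w_i\in A$, a nonempty intersection with $\{u_1,w_1\}\ne\{u_2,w_2\}$ would amount to sharing exactly one free corner, say $u_1=u_2$ but $w_1\ne w_2$. I would rule this out by examining the edges at the common free corner $u_1$ contributed by each configuration: these force a structural configuration around $u_1$ that either yields a nontrivial barrier of $G$ (contradicting bicriticality, Theorem~\ref{thm:elp-bricks}) or forces an internal spoke/rung of one of the $K_i$ to have the wrong number of cubic ends, violating the prescribed $R$-thin index via Proposition~\ref{prop:notions-of-index-coincide}.

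I expect the main obstacle to be the inductive propagation in the first case of the first step: the bookkeeping becomes intricate when $K_1$ and $K_2$ are of different types (one biwheel and one ladder) or of the same type but of opposite parities, because the colour class of each path/ladder vertex depends on its index, and the $R$-corners of both configurations must fit into the four-vertex set $V(R)$. The Lov\'{a}sz--Vempala lemma on non-removable edges at a vertex, together with Corollary~\ref{cor:application-of-LV}, should be useful tools for excluding unwanted local configurations along the way.
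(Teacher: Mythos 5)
Your overall strategy coincides with the paper's: show that if a shared vertex is not a free corner of both configurations then all of its $K_1$-edges are forced into $E(K_2)$, propagate this through the connected graph $K_1-\{u_1,w_1\}$ to conclude $K_1=K_2$, and finally exclude the possibility that $K_1$ and $K_2$ share exactly one free corner. The genuine gap lies in the pivotal sub-case where a cubic non-free-corner vertex $v$ of $K_1$ is a \emph{free corner} of $K_2$ (so that $K_2$ is an \Rladder\ and $v$ is an end of one of its external rungs). Your reduction --- pass to the $V(R)$-corner of $K_2$ adjacent to $v$, necessarily $a_1$ or $b_1$, and ``apply the previous sub-case'' --- is circular: starting the propagation at $a_1$, its only $K_1$-neighbours are $u_1$ (a free corner of $K_1$, of unknown degree) and $v$ itself (the free corner of $K_2$ you began with), so the propagation stalls immediately and cannot deliver $K_1=K_2$. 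The same issue infects your first sub-case: knowing that a successive neighbour is cubic and lies in $V(K_2)$ does not let you match its edges unless you also know it is not a free corner of $K_2$, which is exactly the situation in question. The paper closes this case (Claim~\ref{claim:not-a-free-corner}) with a direct structural contradiction rather than a reduction: the external rung $a_1v$ of the ladder $K_2$ lies in a quadrilateral of $K_2$, which forces $u_1$ to be a cubic internal vertex of $K_2$ (whence $K_1$ is also a ladder), identifies the first internal rung of $K_2$ as the edge joining $u_1$ to its $K_1$-neighbour other than $a_1$, and then the next rung of $K_2$ would have to join the third neighbour of $u_1$ --- a vertex outside $V(K_1)$ --- to a cubic vertex of $K_1$ all of whose neighbours lie in $V(K_1)$, which is impossible. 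Some argument of this kind is indispensable and is absent from your proposal.

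Two further points. You never establish that $u_1,w_1\notin V(R)$ (and likewise $u_2,w_2\notin V(R)$); this is needed both to rule out the degenerate situation in which an \Rconf\ with both free corners in $V(R)$ spans all of $G$ and is therefore the unique \Rconf, and to justify your implicit assertion that a $V(R)$-corner of $K_2$ encountered inside $K_1$ must be $a_1$ or $b_1$ --- the paper obtains this from Lemma~\ref{lem:conformality-of-Rconfigurations}{\it (i)}. Also, your plan for excluding a single shared free corner is too local: the barrier the paper exhibits is not ``around $u_1$'' but global, namely $S:=(A-V(K_1\cup K_2))\cup\{w_1,w_2\}$, which contains all neighbours of the nonempty set $T:=B-V(K_1\cup K_2)$ because every vertex of $K_1\cup K_2$ other than the free corners is cubic with all of its neighbours inside $V(K_1\cup K_2)$; a parity count then shows $S$ is a nontrivial barrier. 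Naming bicriticality as the tool is right, but the barrier still has to be produced.
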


By the above proposition,
the only vertices that can be possibly shared between
two distinct {\Rconf}s are their respective free corners.
The remaining two corners of
each $R$-configuration lie in~$V(R)$.
Since $|V(R)|=4$, we immediately have the following consequence.

\begin{cor}
\label{cor:at-most-two-Rconfigurations}
A simple $R$-brick has at most two distinct $R$-configurations. \qed
\end{cor}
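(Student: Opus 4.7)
The plan is a short pigeonhole argument built directly on Proposition~\ref{prop:Rconfigurations-almost-disjoint}, which has already been stated and is the only real input needed. I would argue by contradiction: assume that a simple \Rbrick~$G$ has three pairwise distinct \Rconf s $K_1, K_2, K_3$.

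By the convention in Notation~\ref{Not:biwheel-ladder-convention} and the definitions in Sections~\ref{sec:Rbiwheel-configurations}~and~\ref{sec:Rladder-configurations}, each $K_i$ has \external s~$a_iu_i$~and~$b_iw_i$ with $a_i, b_i \in V(R)$; thus each $K_i$ contributes the pair of corners $\{a_i, b_i\}$ lying in $V(R)$, where $a_i, b_i$ are \emph{not} free corners of~$K_i$. The key observation to extract from Proposition~\ref{prop:Rconfigurations-almost-disjoint} is that, for any two distinct \Rconf s $K_i$ and $K_j$, the only vertices that can possibly be shared between them are the free corners of each. Consequently, $\{a_i, b_i\} \cap V(K_j) = \emptyset$ for every $i \neq j$, because $a_i$ and $b_i$ are non-free corners of~$K_i$ and therefore cannot coincide with any vertex of~$K_j$ under either alternative (i) or (ii) of the proposition.

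Applying this to the three pairs $(K_1,K_2)$, $(K_1,K_3)$, $(K_2,K_3)$ in turn, I conclude that the six vertices $a_1, b_1, a_2, b_2, a_3, b_3$ are pairwise distinct. But all of them lie in~$V(R) = \{a_1, a_2, b_1, b_2\}$ (abusing notation between Notation~\ref{Not:Rbrick-doubleton} and the corner labels), which has only four elements. This contradiction establishes that there can be at most two distinct \Rconf s, finishing the proof. There is no real obstacle here; the whole argument is a two-line pigeonhole deduction once Proposition~\ref{prop:Rconfigurations-almost-disjoint} is in hand, so the genuine work lies in that proposition and not in the corollary itself.
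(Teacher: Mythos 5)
Your proposal is correct and is essentially the argument the paper itself gives: the remark preceding the corollary observes that, by Proposition~\ref{prop:Rconfigurations-almost-disjoint}, distinct $R$-configurations can share only their free corners, so each configuration contributes two further corners lying in the four-element set $V(R)$, and the pigeonhole conclusion follows. Your write-up merely makes explicit the step that the non-free corners of one configuration cannot lie in another, which is exactly the intended reading.
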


For instance, if $G$ is a Norine-Thomas brick or if it is a pseudo-biwheel
then it has only one \Rconf. On the other hand,
if $G$ is a double biwheel or a double ladder or a laddered biwheel,
then it has two {\Rconf}s, say $K_1$~and~$K_2$. Furthermore,
if $G$ is of type~II then $K_1$ and $K_2$ are vertex-disjoint
as in Proposition~\ref{prop:Rconfigurations-almost-disjoint}{\it (i)};
whereas, if~$G$ is of type~I then $K_1$ and $K_2$
have the same free corners but they do not have any other vertices in common
as in Proposition~\ref{prop:Rconfigurations-almost-disjoint}{\it (ii)}.

\subsection{The $R$-biwheel and $R$-ladder Theorems}

It is easily verified that
if~$G$ is any \Rbrick\ in~$\mathcal{N}$, then every \Rthin\ edge of~$G$
lies in an \Rconf. Here, we state two theorems which show that this is not
a coincidence.

\smallskip
Now, let $G$ be a simple $R$-brick which is free of strictly \Rthin\ edges.
Given any \Rthin\ edge~$e$ of~$G$,
we may invoke one of these theorems (depending on the index of~$e$) to find an
\Rconf~$K$ containing the edge~$e$. In particular, if the index of~$e$ is one,
we apply Theorem~\ref{thm:Rbiwheel-configuration} and in this case $K$ is an \Rbiwheel;
whereas, if the index of~$e$ is two, we apply Theorem~\ref{thm:Rladder-configuration}
and in this case $K$ is an \Rladder.

\begin{thm}
{\sc [$R$-biwheel Theorem]}
\label{thm:Rbiwheel-configuration}
Let $G$ be a simple \Rbrick\ which is free of strictly \Rthin\ edges,
and let $e$ denote an \Rthin\ edge whose index is one.
Then $G$ contains an \Rbiwheel, say~$K$,
such that $e$ is an internal spoke of~$K$.
\end{thm}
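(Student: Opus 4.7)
The plan is to build the $R$-biwheel one spoke at a time, starting from~$e$ and growing outward until the construction reaches vertices of~$V(R)$. The structural tools I would lean on are the barrier characterization of $R$-thin edges (Proposition~\ref{prop:equivalent-definition-Rthin}), the index characterization (Proposition~\ref{prop:notions-of-index-coincide}), and the Lov\'asz--Vempala machinery controlling non-removable edges in the bipartite graph $H := G - R$.

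First I would extract the local structure forced by~$e$. Since ${\sf index}(e) = 1$, Proposition~\ref{prop:notions-of-index-coincide} yields exactly one cubic end of~$e$; label it~$x_1$ and its non-cubic companion~$u$, taking $x_1 \in A$ and $u \in B$ without loss of generality. Let $x_0, x_2 \in B$ be the other two neighbors of~$x_1$ in~$G$. Because $e$ is $R$-thin, every barrier of~$G-e$ has at most two vertices (Proposition~\ref{prop:equivalent-definition-Rthin}), and ${\sf index}(e)=1$ supplies a unique maximal nontrivial barrier~$S$ of~$G-e$; since removing the neighbors of~$x_1$ in $G-e$ isolates~$x_1$, we must have $S = \{x_0, x_2\}$.

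Next, I would use the absence of strictly $R$-thin edges to plant a quadrilateral. Since $e$ is $R$-compatible but $G$ has no strictly $R$-thin edges, $e$ cannot be strictly thin; hence the retract of~$G-e$, obtained by the single bicontraction at~$x_1$, has multiple edges. This forces a common neighbor $x_3 \in A$ of $x_0$ and $x_2$ distinct from~$x_1$, yielding a quadrilateral $Q = x_0 x_1 x_2 x_3$ in~$H$. The technical crux is then to establish inductively that $u x_3$ is an edge of~$G$ and is itself an $R$-thin edge of index one with cubic end~$x_3$. With this in hand, the argument iterates: a chain $x_1, x_3, x_5, \ldots$ of cubic vertices in~$A$ is discovered, each joined to~$u$ by a spoke, and the chain must terminate at a vertex of~$V(R)$ since~$G$ is finite and every extension introduces a new cubic vertex. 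A symmetric argument on the opposite side identifies the second hub $w \in A$ as the common neighbor of consecutive $x_i$'s in~$B$ and locates the other end in~$V(R)$. Concatenating yields a partial biwheel~$K$ whose ends lie in~$V(R)$, whose hubs $u,w$ are non-cubic in~$G$, and whose internal spokes are all $R$-thin of index one; this is by definition an $R$-biwheel containing $e$ as an internal spoke.

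The main obstacle is the inductive step: verifying that the next spoke $u x_3$ is present and inherits the property of being $R$-thin of index one. This requires combining the barrier structure of $G-e$ around $S=\{x_0,x_2\}$ with the Lov\'asz--Vempala restrictions (Lemma~\ref{lem:lovasz-vempala} and its Corollaries~\ref{cor:quadrilateral-LV} and~\ref{cor:application-of-LV}) applied to~$H$, showing that the only way for the retract of $G-(ux_3)$ to avoid being a simple brick, and for $x_3$ to avoid being the sole cubic end, is consistent with the partial biwheel pattern. It is at this point that the ``free of strictly $R$-thin edges'' hypothesis is invoked repeatedly to keep feeding the iteration.
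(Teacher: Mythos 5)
Your overall strategy -- grow outward from $e$, using the ``free of strictly \Rthin\ edges'' hypothesis at each step to force new structure, and terminate the rim at vertices of~$V(R)$ -- is the same as the paper's. But the inductive mechanism you describe contains a genuine error that would derail the construction. The vertex $x_3$ that you obtain as the common neighbour of $x_0$ and $x_2$ (forced by the multiple edges in the retract of $G-e$) is not the next rim vertex of the biwheel: it is the \emph{second hub}. Indeed, $x_0$ and $x_2$ are the rim-neighbours of $x_1$, and in a partial biwheel their only common neighbours are $x_1$ and the hub $w$; moreover Proposition~\ref{prop:f-g-adjacent} shows this common neighbour has degree four or more. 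So your claim that ``$ux_3$ is an edge of~$G$ and is itself an \Rthin\ edge of index one with cubic end~$x_3$'' cannot be right: $x_3=w$ must be non-cubic, and $uw$ is not an edge of a partial biwheel (nor need it be an edge of~$G$). As written, your iteration would be trying to attach a spoke joining the two hubs and to make a hub cubic, which contradicts the very configuration you are building.

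The correct iteration (which is what the paper does, via Lemma~\ref{lem:index-one-Rthin-edge}) runs through the \emph{spokes} $f:=wx_0$ and $g:=wx_2$, not through an edge from $u$ to the common neighbour. The dichotomy is: if $f$ is not \Rcomp, then its cubic end $x_0$ lies in~$V(R)$ and the rim terminates there; if $f$ is \Rcomp, then (since its index cannot be two here by Lemma~\ref{lem:removable-not-thin}, and it cannot be \sRthin\ by hypothesis) $f$ is \Rthin\ of index one, and re-applying the same lemma with $f$ in the role of $e$ produces a \emph{new} rim vertex -- the third neighbour of $x_0$ -- which is cubic and adjacent to~$u$, extending the rim path by one. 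This also repairs your termination argument: finiteness alone tells you the growth stops, but it is precisely the ``not \Rcomp\ $\Rightarrow$ end in $V(R)$'' branch of the dichotomy (combined with a maximality argument for the chosen path) that places the two ends of the rim in~$V(R)$. Finally, you should also address why the resulting rim path is odd, i.e.\ why its two ends lie in different colour classes; the paper gets this by showing the two ends are nonadjacent and both in $V(R)$, so they cannot both be ends of $\alpha$ or both be ends of $\beta$.
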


The proof of the above theorem appears in
Section~\ref{sec:proof-Rbiwheel-theorem},
and it is along the same lines as the proof of \cite[Theorem 4.6]{clm08}.

\smallskip
Given the statement of Theorem~\ref{thm:Rbiwheel-configuration}, one would expect that,
likewise, if $e$ is an $R$-thin edge whose index is two then $G$ contains an \Rladder, say~$K$,
such that $e$ is an internal rung of~$K$. Unfortunately, this is not true, in general.
Consider the double ladder of type~I, shown in Figure~\ref{fig:double-ladder-typeI-Rconfigurations}; $e$ is an $R$-thin edge of index two,
and although it is part of an \Rladder,
it is not a rung of that ladder.
We instead prove the following slightly weaker statement concerning
\Rthin\ edges of index two.

\begin{thm}
{\sc [$R$-ladder Theorem]}
\label{thm:Rladder-configuration}
Let $G$ be a simple \Rbrick\ which is free of strictly \Rthin\ edges,
and let $e$ denote an \Rthin\ edge whose index is two.
Then $G$ contains an \Rladder, say~$K$,
such that $e \in E(K)$.
\end{thm}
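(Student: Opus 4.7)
My plan is to adapt the strategy of Theorem~\ref{thm:Rbiwheel-configuration} to the index-two setting. Since $e$ is $R$-thin of index two, Proposition~\ref{prop:notions-of-index-coincide} gives that both endpoints $x, y$ of~$e$ are cubic in~$G$ and that $e$ lies in no triangle. Because $e$ is $R$-compatible, $e \in E(H)$, so by Notation~\ref{Not:Rbrick-doubleton} we may take $x \in A$ and $y \in B$. Let $x_1, x_2$ be the neighbours of $x$ in~$G$ distinct from~$y$, and $y_1, y_2$ those of $y$ distinct from~$x$.

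First I would exploit that $e$ is not strictly $R$-thin. In $G - e$, both $x$ and $y$ have degree two, so the retract~$J$ is obtained by bicontracting them: $x_1, x_2$ are merged into a vertex~$x'$ and $y_1, y_2$ into~$y'$. Since $G$ has no \sRthin\ edges, $J$ has parallel edges, and these can only be incident with~$x'$ or~$y'$. Thus at least one of the following local configurations must occur in~$H$:
\begin{enumerate}[(a)]
\item $x_1, x_2$ have a common neighbour $w \neq x$, yielding a 4-cycle through~$x$ in~$H$ that does not contain~$e$;
\item symmetrically, $y_1, y_2$ have a common neighbour $w \neq y$;
\item there are at least two edges of~$H$ between $\{x_1, x_2\}$ and $\{y_1, y_2\}$, yielding two 4-cycles through~$e$ meeting only in~$e$.
\end{enumerate}
A careful local analysis in~$H$, using triangle-freeness of~$e$ together with Corollary~\ref{cor:quadrilateral-LV}, pins down the precise configuration in each case.

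Next I would assemble a seed sub-ladder $K_0 \subseteq H$ containing~$e$. In case~(c) the two 4-cycles through~$e$ already form a three-rung ladder whose middle rung is~$e$. In cases~(a) and~(b) the initial 4-cycle gives only part of the final ladder, and $e$ will appear as a \emph{path edge} rather than a rung; in case~(a) for instance, the 4-cycle $x x_1 w x_2$ forms a two-rung sub-ladder whose bottom path ends at~$x$, and one augments it by the edge~$e = xy$ to begin the bottom path's continuation. I would then iterate outward from each ``free'' end of~$K_0$: at a cubic outermost vertex, the ``no \sRthin'' hypothesis on~$G$, combined with Theorem~\ref{thm:rank-plus-index}, Proposition~\ref{prop:equivalent-definition-Rthin} and the barrier analysis of Section~\ref{sec:index-rank}, forces a further 4-cycle of the same type, which glues to~$K_0$ and enlarges it. The iteration terminates at an end precisely when the outermost vertex is either non-cubic in~$G$ (the role of a free corner $u$ or~$w$) or lies in~$V(R)$ (the role of a corner $a$ or~$b$); verifying the three defining conditions of an \Rladder\ against the resulting maximal ladder then completes the proof.

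The principal obstacle is the bookkeeping forced by the fact that~$e$ need not be a rung of the resulting~$K$. As the double ladder of type~I in Figure~\ref{fig:double-ladder-typeI-Rconfigurations} illustrates, $e$ can sit as a path edge of~$K$; seed cases~(a) and~(b) are the ones producing this scenario, while case~(c) produces~$e$ as a rung. The extension step must therefore be robust enough to apply uniformly across all three seed types, and the termination argument must guarantee that the two ``top'' corners land in~$V(R)$ simultaneously. A secondary technical point is the possibility that $x$ or $y$ already lies in~$V(R)$, in which case one of $x_1, x_2$ (resp.\ $y_1, y_2$) is an $R$-neighbour in~$G$; this slightly perturbs the bipartite analysis but is absorbed by the same case work.
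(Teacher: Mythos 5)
Your opening matches the paper's setup exactly: both ends of $e$ are cubic, the retract $J$ of $G-e$ is obtained by bicontracting both of them, and the location of the parallel edges of $J$ forces precisely your trichotomy (a)/(b)/(c) --- the paper's Case~1 is your (c) and its Case~2 is your (a)/(b). The gaps lie in everything after the seed. In cases (a) and (b), your plan to ``iterate outward, forcing a further 4-cycle of the same type'' misses what actually has to be proved. There the final ladder contains $e$ only as a path edge, so its internal rungs are \emph{new} edges (in the paper's notation $y_0y_1$ and, in the order-eight subcase, $z_0z_1$), and condition~(iii) of the definition of an \Rladder\ requires each of them to be an \Rthin\ edge of index two. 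These edges were never assumed to be \Rthin: one must prove from scratch that they are \Rcomp\ (by exhibiting perfect matchings of $H$, and of $G$ containing $R$, that avoid them) and then determine the maximal nontrivial barriers of $G$ minus that edge and show each has exactly two vertices, so that Proposition~\ref{prop:equivalent-definition-Rthin} applies. That is the bulk of the paper's Case~2, and no amount of locating further 4-cycles substitutes for it; the case analysis also needs Lemmas~\ref{lem:index-two-non-removable} and~\ref{lem:removable-not-thin} to decide which of the two edges at the common neighbour $w$ is \Rcomp\ but not thin and which is thin but not \Rcomp, to show $w$ is non-cubic and distinct from $z_0$ and $z_2$, and to exclude a configuration that would otherwise yield a $2$-vertex cut.

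In case (c) the termination of your iteration is asserted rather than proved. For a maximal ladder $K$ with $e$ an internal rung you must first show that the external rung $au$ is \emph{not} \Rcomp; the paper does this by showing that otherwise $au$ would be \Rthin\ of index two and not strictly \Rthin, which forces two new adjacent vertices outside $K$ and hence a longer ladder, contradicting maximality. Only then does Lemma~\ref{lem:index-two-Rthin-edge-no-common-neighbour}(i) place a cubic end of each external rung in $V(R)$. You correctly flag, but do not resolve, the remaining parity issue: knowing that one of $\{a,u\}$ and one of $\{b,w\}$ is a cubic $V(R)$-corner, one must exclude the crossed configuration in which $a$ and $w$ are the two $V(R)$-corners, which the paper does by exhibiting $\{b,u\}$ as a $2$-vertex cut. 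Finally, your ``secondary technical point'' is vacuous: neither end of $e$ can lie in $V(R)$, since a cubic vertex in $V(R)$ has degree two in $H$, so an edge incident with it is not removable in $H$, contradicting the $R$-compatibility of~$e$.
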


The proof of the above theorem appears in
Section~\ref{sec:proof-Rladder-theorem} and it
is significantly longer than that of
the \mbox{$R$-biwheel} Theorem~(\ref{thm:Rbiwheel-configuration}).
These two theorems (\ref{thm:Rbiwheel-configuration} and
\ref{thm:Rladder-configuration})
are central to our proof of
the Strictly $R$-thin Edge Theorem (\ref{thm:strictly-Rthin-nb-bricks}).

\subsection{Proof Sketch of Theorem~\ref{thm:strictly-Rthin-nb-bricks}}
\label{sec:proof-sketch}

As in the statement of the theorem, let $G$ be a simple \Rbrick\ which is free of
strictly \Rthin\ edges. Our goal is to show that $G$ is a member of one of the
eleven infinite families which appear in the statement of the theorem,
that is, to show that $G \in \mathcal{N}$.
We adopt Notation~\ref{Not:Rbrick-doubleton}.

\smallskip
We may assume that $G$ is different from $K_4$ and $\overline{C_6}$,
and thus, by the $R$-thin Edge Theorem
(\ref{thm:Rthin-nb-bricks}), $G$ has an $R$-thin edge, say~$e_1$.
Depending on the index of~$e_1$, we invoke
either the \mbox{$R$-biwheel} Theorem (\ref{thm:Rbiwheel-configuration})
or the \mbox{$R$-ladder} Theorem (\ref{thm:Rladder-configuration}) to
deduce that $G$ has an \Rconf, say~$K_1$, such that
$e_1 \in E(K_1)$. We shall let $a_1u_1$ and $b_1w_1$ denote
the \external\ of~$K_1$, and adjust notation so that $u_1$ and $w_1$ are
its free corners.

\smallskip
We will show that either $u_1$ and $w_1$ both lie in~$V(R)$, or otherwise neither
of them lies in~$V(R)$. In the former case, we will conclude that $G$ is either
a prism or a M{\"o}bius ladder or a truncated biwheel, and we are done.

\smallskip
Now suppose that $u_1, w_1 \notin V(R)$. In this case, we will show that
either $G$ is a staircase or a pseudo-biwheel, and we are done;
or otherwise, $G$ has an $R$-compatible
edge which is not in~$E(K_1)$. In the latter case, we will apply
Theorem~\ref{thm:rank-plus-index} to deduce that $G$ has an $R$-thin edge, say~$e_2$,
which is not in~$E(K_1)$. Depending on the index of~$e_2$, we may once again
use either the \mbox{$R$-biwheel} Theorem (\ref{thm:Rbiwheel-configuration})
or the \mbox{$R$-ladder} Theorem (\ref{thm:Rladder-configuration}) to
conclude that $G$ has an \Rconf, say~$K_2$, such that $e_2 \in E(K_2)$.

\smallskip
By Proposition~\ref{prop:Rconfigurations-almost-disjoint}, either $K_1$ and $K_2$ are vertex-disjoint, or otherwise $K_2$
has the same free corners as~$K_1$ but is otherwise vertex-disjoint with~$K_1$.
In the latter case, we will conclude that $G$ is either a double
biwheel or a double ladder or a laddered biwheel, each of type~I, and we are done.

\smallskip
Now suppose that $K_1$ and $K_2$ are vertex-disjoint.
We will argue that either $G$ is a double biwheel or a double ladder or a laddered
biwheel, each of type~II, and we are done; or otherwise, $G$ has an
$R$-compatible edge which is not in~$E(K_1 \cup K_2)$. In the latter case,
we will once again apply Theorem~\ref{thm:rank-plus-index} to conclude that
$G$ has an $R$-thin edge, say~$e_3$, which is not in~$E(K_1 \cup K_2)$. As
usual, depending on the index of~$e_3$, we invoke either the
\mbox{$R$-biwheel} Theorem (\ref{thm:Rbiwheel-configuration}) or the
\mbox{$R$-ladder} Theorem (\ref{thm:Rladder-configuration}) to deduce
that $G$ has an \Rconf, say~$K_3$, such that $e_3 \in E(K_3)$.

\smallskip
We have thus located three distinct {\Rconf}s in the brick~$G$, namely,
$K_1, K_2$ and $K_3$. However, this contradicts
Corollary~\ref{cor:at-most-two-Rconfigurations}, and completes
the proof sketch of the Strictly \Rthin\ Edge Theorem~(\ref{thm:strictly-Rthin-nb-bricks}).
%\qed

\section{$R$-thin edges}
\label{sec:R-thin}

Here, we will prove the
\mbox{$R$-biwheel} Theorem (\ref{thm:Rbiwheel-configuration}) and
the \mbox{$R$-ladder} Theorem (\ref{thm:Rladder-configuration}).
Our proofs are inspired by the work of Carvalho et al. \cite{clm08}.
In the next section, we will review conditions under which an $R$-thin edge
is not strictly thin, and we will state a few key lemmas
(\ref{lem:index-one-non-removable}, \ref{lem:index-two-non-removable}
and \ref{lem:removable-not-thin})
from \cite{clm08}
which are used in our proofs.
Before that, we state a few preliminary facts; their proofs
may be found in \cite[Chapter 4]{koth16}.

\smallskip
The removable edges of a bipartite graph satisfy the following
`exchange property'.
\begin{prop}
%{\sc [Exchange Property of Removable Edges]}
\label{prop:exchange-property-removable-bipmcg}
Let $H$ denote a bipartite matching covered graph, and
let $e$ denote a \mbox{removable} edge of~$H$. If $f$
is a removable edge of~$H-e$, then:
\begin{enumerate}[(i)]
\item $f$ is removable in~$H$, and
\item $e$ is removable in~$H-f$. \qed
\end{enumerate}
\end{prop}

A matching covered subgraph $K$ of a matching covered graph $H$ is {\it conformal}
if the graph $H-V(K)$ has a perfect matching; equivalently; $K$ is conformal if each perfect
matching of $K$ extends to a perfect matching of~$H$.
The following is a generalization of Proposition~\ref{prop:exchange-property-removable-bipmcg}
that is easily proved using the theory of ear decompositions (see \cite{koth16}).
\begin{prop}
\label{prop:conformal-exchange-property-removable-bipmcg}
Let $K$ be a conformal matching covered subgraph of a bipartite matching covered graph~$H$.
Let $e$ denote a removable edge of~$K$.
Then $e$ is removable in~$H$ as well. \qed
\end{prop}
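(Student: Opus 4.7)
The plan is to invoke the classical fact that if $K$ is a conformal matching covered subgraph of a bipartite matching covered graph~$H$, then $H$ admits an ear decomposition starting from~$K$. That is, there exists a sequence $K = H_0 \subset H_1 \subset \dots \subset H_n = H$ of bipartite matching covered graphs in which each $H_i$ is obtained from $H_{i-1}$ by adjoining a single \emph{ear}: an odd path $P_i$ whose two endpoints $u, v$ lie in $V(H_{i-1})$ (necessarily on opposite sides of the bipartition of~$H$), and whose internal vertices are new.

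Given such a decomposition, I would prove by induction on~$i$ that $H_i - e$ is matching covered; the case $i = n$ then yields the desired conclusion. The base case $H_0 - e = K - e$ is precisely the hypothesis that $e$ is removable in~$K$. For the inductive step, assume $H_{i-1} - e$ is matching covered. Since $H_i - e = (H_{i-1} - e) + P_i$, and since $P_i$ is an odd path whose endpoints lie on opposite sides of the inherited bipartition of $H_{i-1} - e$, with all internal vertices new, it suffices to show that attaching such an ear to a bipartite matching covered graph produces a matching covered graph.

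To verify this claim, write $P_i = x_0 x_1 \dots x_{2k+1}$ with $u = x_0$ and $v = x_{2k+1}$ on opposite sides of the bipartition. For any edge $f$ of $H_{i-1} - e$, take a perfect matching $M'$ of $H_{i-1} - e$ containing~$f$ and extend it to a perfect matching of $H_i - e$ by adjoining the edges $\{x_1x_2, x_3x_4, \dots, x_{2k-1}x_{2k}\}$; this extended matching also contains every edge of $P_i$ of the form $x_{2j-1}x_{2j}$. To cover an edge $x_{2j}x_{2j+1}$ of $P_i$ (with $0 \leq j \leq k$), I would instead take the union of $\{x_0x_1, x_2x_3, \dots, x_{2k}x_{2k+1}\}$ with a perfect matching of $(H_{i-1} - e) - \{u, v\}$, the existence of which follows from applying Proposition~\ref{prop:characterizations-of-bipmcg}(iii) to the bipartite matching covered graph $H_{i-1} - e$.

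The main obstacle is the first step: invoking the ear decomposition theorem of Lov{\'a}sz starting from a prescribed conformal matching covered subgraph, rather than from a single edge. The bipartite setting is slightly cleaner than the general one, since every ear is a single odd path (no double ears are required). Once that machinery is available, the induction is a routine verification using only the bipartite characterization of matching coveredness, and this is presumably what the authors have in mind when they remark that the proposition is easily proved using the theory of ear decompositions.
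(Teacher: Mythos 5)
Your proof is correct, and it is exactly the argument the paper has in mind: the paper gives no proof of this proposition but explicitly remarks that it "is easily proved using the theory of ear decompositions," which is precisely your route (bipartite ear decomposition of $H$ starting from the conformal subgraph $K$, followed by the routine induction showing that adding a single odd ear preserves matching coveredness of $H_i - e$). The ear decomposition theorem you invoke — that a bipartite matching covered graph admits a single-ear decomposition starting from any conformal matching covered subgraph — is the standard result from Lov\'asz--Plummer, so there is no gap.
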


The following is a useful characterization of non-removable edges
in bipartite graphs.
\begin{prop}
%{\sc [Characterization of Non-removable Edges]}
\label{prop:characterization-non-removable-bipartite}
Let $H[A,B]$ denote a bipartite matching covered graph
on four or more vertices.
An edge~$e$ of~$H$ is non-removable
if and only if there exist partitions $(A_0,A_1)$ of~$A$
and $(B_0,B_1)$ of~$B$ such that $|A_0| = |B_0|$
and $e$ is the only edge joining
a vertex in~$B_0$ to a vertex in~$A_1$. \qed
\end{prop}

This fact yields the following corollary.
\begin{cor}
\label{cor:quadrilateral-admissible-removable}
Suppose that $Q$ is a $4$-cycle of a bipartite matching covered graph~$H$,
and let $e$ and $f$ denote two nonadjacent edges of~$Q$.
If $f$ is admissible in~$H-e$ then $e$ is removable in~$H$. \qed
\end{cor}

\subsection{Multiple Edges in Retracts}
\label{sec:multiple-edges-retracts}

Throughout this section, $G$ is a simple \Rbrick,
and we adopt Notation~\ref{Not:Rbrick-doubleton}.
Furthermore, we shall let
$e$ denote an \Rthin\ edge which is not strictly thin,
and $J$ the retract of~$G-e$.
Since $e$ is not strictly thin,
$J$ is not simple, and we shall let $f$ and $g$ denote two multiple (parallel)
edges of~$J$. It should be noted that since $J$ is also an \Rbrick, neither
edge of~$R$ is a multiple edge of~$J$.
In particular, $f$ and $g$ do not lie in~$R$.

\smallskip
We denote the ends of~$e$ by letters $y$~and~$z$ with subscripts~$1$;
that is, $e:=y_1z_1$.
Adjust notation so that $y_1 \in A$ and $z_1 \in B$.
If either end of~$e$ is cubic, then we denote
its two neighbours in~$G-e$ by subscripts $0$~and~$2$. For example,
if $y_1$ is cubic then $N(y_1) = \{z_1,y_0,y_2\}$.

\smallskip
As $G$ is simple, it follows that $J$ has a contraction vertex which is incident
with both $f$~and~$g$. We infer that one end of~$e$, say~$y_1$,
is cubic, and that
$f$ is incident with~$y_0$, and $g$ is incident with~$y_2$.
See Figure~\ref{fig:Rthin-not-strictly-thin}.
As noted earlier, $f \notin R$; consequently, $e$ and $f$
are nonadjacent. Likewise, $e$ and $g$ are nonadjacent.

\begin{figure}[!ht]
\medskip
\centering
\begin{tikzpicture}

\draw[dashed] (-1.5,0) -- (-1.5,2);
\draw[dashed] (1.5,0) -- (1.5,2);

\draw (-1.75,1)node[nodelabel]{$f$};
\draw (1.75,1)node[nodelabel]{$g$};

\draw (0,0) -- (0,2);
\draw (0,0) -- (-1.5,0);
\draw (0,0) -- (1.5,0);

\draw (0,0) node{}node[nodelabel,below]{$y_1$};
\draw (0,2) node[fill=black]{}node[nodelabel,above]{$z_1$};
\draw (-1.5,0) node[fill=black]{}node[nodelabel,below]{$y_0$};
\draw (1.5,0) node[fill=black]{}node[nodelabel,below]{$y_2$};
\draw (0.25,1)node[nodelabel]{$e$};

\end{tikzpicture}
%\vspace*{-0.1in}
\caption{$f$ and $g$ are multiple edges in the retract~$J$ of~$G-e$;
the vertex~$y_1$ is cubic}
\label{fig:Rthin-not-strictly-thin}
\bigskip
\end{figure}
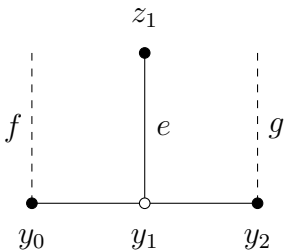

We will consider two separate cases depending on whether the
edges $f$~and~$g$ are adjacent (in~$G$) or not.
In the case in which they are adjacent,
we shall denote their common end by~$w$, as shown
in Figure~\ref{fig:f-and-g-adjacent-or-not}a.
Now suppose that $f$ and $g$ are nonadjacent. Since
they are multiple (parallel) edges of~$J$, we infer that both
ends of~$e$ are cubic, and that $f$ and $g$ join
the two contraction vertices of~$J$.
This proves the following
proposition; see Figure~\ref{fig:f-and-g-adjacent-or-not}b.

\begin{prop}
\label{prop:f-g-nonadjacent}
Suppose that $f$ and $g$ are nonadjacent in~$G$.
Then the following hold:
\begin{enumerate}[(i)]
\item each end of~$e$ is cubic,
\item consequently, the index of~$e$ is two, and
\item one of $f$ and $g$ is incident with $z_0$ whereas the other one
is incident with~$z_2$. \qed
\end{enumerate}
\end{prop}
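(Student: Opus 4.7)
The plan is to trace through which vertex identifications in the retract $J$ are responsible for $f$ and $g$ becoming parallel edges. Since $e = y_1 z_1$ is the only edge removed when passing from $G$ to $G-e$, only the vertices $y_1$ and $z_1$ can drop in degree; moreover, because $y_0$ and $y_2$ have degree at least three in $G$ (hence also in $G-e$), bicontracting $y_1$ produces no new degree-two vertex. Thus the retract of $G-e$ is obtained by bicontracting $y_1$ and at most one further vertex, which can only be $z_1$.

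For (i), write the ``other'' ends of $f$ and $g$ in $G$ as $v_f$ and $v_g$, so that $f = y_0 v_f$ and $g = y_2 v_g$. Bicontracting $y_1$ identifies $y_0$ with $y_2$, which gives one common endpoint of $f$ and $g$ in $J$. Since $f$ and $g$ are parallel in $J$, their other endpoints must also coincide in $J$. Under the nonadjacency hypothesis $v_f \neq v_g$ in $G$, so these two vertices must be merged by a further bicontraction. The only available one is at $z_1$, which forces $z_1$ to be cubic in $G$; this is (i).

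For (ii), once both ends of $e$ are known to be cubic, Proposition~\ref{prop:notions-of-index-coincide} rules out index zero (which requires both ends of degree at least four) and index one (which requires exactly one cubic end), leaving the index of $e$ equal to two. For (iii), the bicontraction of $z_1$ identifies precisely $z_0$ with $z_2$, and since this is the identification responsible for merging $v_f$ with $v_g$ in $J$, we conclude $\{v_f, v_g\} = \{z_0, z_2\}$, proving (iii).

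I do not see any real obstacle: the argument is essentially a careful tracing of the bicontraction process. The only point that merits care is verifying that the retract of $G-e$ consists of at most the two bicontractions at $y_1$ and $z_1$, so that every vertex identification in $J$ is accounted for; this is what rules out any alternative explanation for the coincidence of $v_f$ and $v_g$ in $J$.
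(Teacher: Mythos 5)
Your argument is correct and is essentially the paper's own (very terse) justification, expanded: the paper likewise observes that since $f$ and $g$ are nonadjacent in $G$ but parallel in $J$, both pairs of their ends must be identified by bicontractions, forcing both ends of $e$ to be cubic and the other ends of $f,g$ to be $z_0$ and $z_2$. Your careful accounting of which bicontractions can occur is a faithful filling-in of that reasoning, with the index claim handled exactly as the paper intends via Proposition~\ref{prop:notions-of-index-coincide}.
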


In view of statement {\it (iii)},
whenever $f$ and $g$ are nonadjacent,
we shall assume without loss of generality that $f:=y_0z_0$ and $g:=y_2z_2$,
as shown in Figure~\ref{fig:f-and-g-adjacent-or-not}b.

\begin{figure}[!ht]
%\medskip
\centering
\begin{tikzpicture}
\draw (0,-2) -- (-0.35,-2.35);
\draw (0,-2) -- (0.35,-2.35);

\draw (-1,-1.2)node[nodelabel]{$f$};
\draw (1,-1.2)node[nodelabel]{$g$};

\draw (-1.5,0) -- (0,-2);
\draw (1.5,0) -- (0,-2);
\draw (0,-2)node{};
\draw (0,-2.35)node[nodelabel]{$w$};

\draw (0,0) -- (0,2);
\draw (0,0) -- (-1.5,0);
\draw (0,0) -- (1.5,0);

\draw (0,0) node{};
\draw (0,-0.35)node[nodelabel]{$y_1$};
\draw (0,2) node[fill=black]{}node[nodelabel,above]{$z_1$};
\draw (-1.5,0) node[fill=black]{}node[nodelabel,left]{$y_0$};
\draw (1.5,0) node[fill=black]{}node[nodelabel,right]{$y_2$};
\draw (0.25,1)node[nodelabel]{$e$};

\draw (0,-3.3)node[nodelabel]{(a)};

\end{tikzpicture}
\hspace*{1in}
\begin{tikzpicture}

\draw (-1.5,0) -- (-1.5,2);
\draw (1.5,0) -- (1.5,2);

\draw (-1.5,2)node{}node[nodelabel,above]{$z_0$} --
(1.5,2)node{}node[nodelabel,above]{$z_2$};

\draw (-1.75,1)node[nodelabel]{$f$};
\draw (1.75,1)node[nodelabel]{$g$};

\draw (0,0) -- (0,2);
\draw (0,0) -- (-1.5,0);
\draw (0,0) -- (1.5,0);

\draw (0,0) node{}node[nodelabel,below]{$y_1$};
\draw (0,2) node[fill=black]{}node[nodelabel,above]{$z_1$};
\draw (-1.5,0) node[fill=black]{}node[nodelabel,below]{$y_0$};
\draw (1.5,0) node[fill=black]{}node[nodelabel,below]{$y_2$};
\draw (0.25,1)node[nodelabel]{$e$};

\draw (0,-1.5)node[nodelabel]{(b)};

\end{tikzpicture}
\caption{(a) when $f$ and $g$ are adjacent; (b) when $f$ and $g$ are nonadjacent}
\label{fig:f-and-g-adjacent-or-not}
\bigskip
\end{figure}
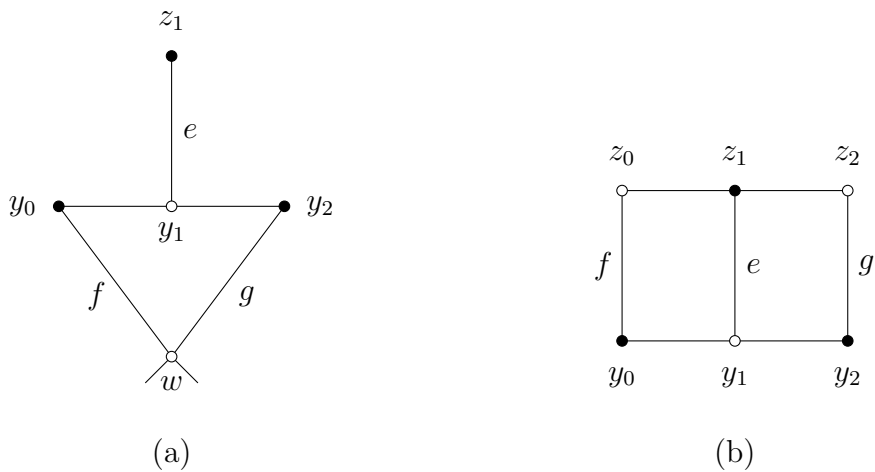

Let us now focus on the case in which $f$ and $g$ are adjacent,
as shown in Figure~\ref{fig:f-and-g-adjacent-or-not}a.
We remark that, in this case, the index of~$e$ is not determined; that is,
its index could be either one or two depending on the degree of
its end~$z_1$.
Instead, we are able to say something about the degree of~$w$.

\begin{prop}
\label{prop:f-g-adjacent}
Suppose that $f$ and $g$ are adjacent in~$G$, and let $w$ be their common end.
Then $w$ has degree four or more.
\end{prop}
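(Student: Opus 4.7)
The plan is to suppose for contradiction that $w$ is cubic in $G$, so that $N_G(w) = \{y_0, y_2, w'\}$ for some $w' \in B$ distinct from $y_0, y_2$. Note that $w \in A$ because it is adjacent to $y_0, y_2 \in B$. Moreover $w \neq y_1$, for otherwise both $f$ and $g$ would be edges incident with $y_1$ in $G$ and hence would be bicontracted away when forming $J$, contradicting $f, g \in E(J)$; and $w \neq z_1$ because $w \in A$ while $z_1 \in B$. The argument then splits according to whether $w$ survives as an ordinary vertex of $J$ or is absorbed into the contraction vertex $z^*$.

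First suppose $w \notin \{z_0, z_2\}$. Then $w$ is not bicontracted when forming $J$, so $w \in V(J)$ with $\deg_J(w) = 3$. Its $J$-neighbors are $y^*$ (with multiplicity two, via $f$ and $g$) and the image of $w'$, which is either $w'$ itself, or $z^*$ in the special case $w' = z_1$ with ${\sf index}(e) = 2$; in either case this image is different from $y^*$ since $w' \notin \{y_0, y_2\}$. Hence $w$ has exactly two distinct neighbors in $J$, and deleting this pair of vertices isolates $w$. Since $|V(J)| \geq 4$ (every brick has at least four vertices), this produces a $2$-vertex cut of $J$, contradicting the $3$-connectivity of the brick~$J$.

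Now suppose $w \in \{z_0, z_2\}$, say $w = z_0$. Then ${\sf index}(e) = 2$ (otherwise $z_0$ is undefined), and by construction $z_0$ is adjacent to $z_1$ in $G$. Since $N_G(w) = \{y_0, y_2, w'\}$ and $z_1 \neq y_0, y_2$ (else $e$ would coincide with $y_1 y_0$ or $y_1 y_2$, contrary to $y_0, y_2$ being the two neighbors of $y_1$ in $G - e$), we deduce $w' = z_1$. Thus $y_1$ and $z_0$ are two distinct cubic vertices of~$A$ with the common neighborhood $S := \{y_0, y_2, z_1\}$, so both are isolated in $G - S$, giving ${\sf odd}(G - S) \geq 2$. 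Since $|V(G)|$ is even, $|V(G) - S|$ is odd, which forces ${\sf odd}(G - S)$ to be odd and therefore at least~$3$; on the other hand, Tutte's condition applied to the brick~$G$ gives ${\sf odd}(G - S) \leq |S| = 3$. Hence $S$ is a nontrivial barrier of~$G$, contradicting the fact that~$G$, being a brick, is bicritical and thus has only trivial barriers. The main obstacle is exactly this second case, where $w$ is no longer a vertex of~$J$ and the clean $2$-cut argument in the retract fails; one must instead exploit the forced ``twin'' structure $N_G(y_1) = N_G(w)$ to produce a nontrivial barrier directly in~$G$.
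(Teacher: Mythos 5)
Your proposal is correct and follows essentially the same strategy as the paper: in the case where $w$ survives as a vertex of the retract~$J$, you use the $3$-connectivity of the brick~$J$ (the paper equivalently uses that every vertex of~$J$ has at least three distinct neighbours), and in the degenerate case you observe that $N_G(w)=N_G(y_1)=\{y_0,y_2,z_1\}$ and derive the same nontrivial-barrier contradiction in~$G$. The only difference is cosmetic: the paper splits on whether $w$ is adjacent to~$z_1$ rather than on whether $w\in\{z_0,z_2\}$, but the two arguments cover the same ground.
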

\begin{proof}
First suppose that $w$ is not a neighbour of~$z_1$. In this case,
$w$ is not affected by the bicontractions in~$G-e$. Consequently, $w$ is a vertex of
the brick~$J$, whence it has at least three distinct neighbours.
Since $f$ and $g$ are
multiple edges, $w$ has degree four or more.

\smallskip
Now suppose that $w$ is a neighbour of~$z_1$.
Observe that the neighbours of~$y_1$ are precisely $y_0, y_2$ and $z_1$;
each of which is adjacent with~$w$.
See Figure~\ref{fig:f-and-g-adjacent-or-not}a.
Note that, if $w$ is cubic,
then its neighbourhood is the same as that of~$y_1$; and in this case,
$\{y_0,y_2,z_1\}$ is a barrier of the brick~$G$; this is absurd.
Thus $w$ has degree four or more.
\end{proof}

Note that $f$ and $g$, being multiple edges of~$J$, are both \Rthin\ in~$J$.
We shall now examine conditions under which one of them, say~$f$,
fails to be \Rthin\ in~$G$.
This may be the case for three different reasons; firstly, $f$ is non-removable in the bipartite graph~$H=G-R$; secondly, $f$ is non-removable in~$G$;
and thirdly, $f$ is removable in~$G$ but it is not thin.

\smallskip
We begin with the situation in which $f$ is non-removable in~$H$.
Note that, if an end of~$f$ is cubic (in~$G$) and if it also lies in~$V(R)$,
then it has degree two in~$H$, rendering $f$ non-removable.
We will now argue that the converse also holds.

\begin{lem}\label{lem:non-removable-in-H}
The edge $f$ is non-removable in~$H$ if and only if
it has a cubic end which lies in~$V(R)$.
\end{lem}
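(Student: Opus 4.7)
The plan is to prove the two directions separately, by direct arguments on the $H$-degrees of the ends of $f$.

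For the easy direction ($\Leftarrow$), suppose $f=vw$ with $v$ cubic in $G$ and $v\in V(R)$. Because exactly one edge at $v$ lies in $R$, we have $\deg_H(v)=2$, so $v$ has degree $1$ in $H-f$. Since $|V(H-f)|=|V(G)|\geq 4$ and a matching covered graph on four or more vertices has minimum degree at least $2$ (the unique edge at a degree-$1$ vertex would be forced in every perfect matching, making any other edge at its mate inadmissible), $H-f$ is not matching covered; hence $f$ is non-removable in~$H$.

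For the converse ($\Rightarrow$), I argue contrapositively: assume no end of $f$ is a cubic vertex of $V(R)$, and deduce that $f$ is removable in~$H$. A short case check shows that then both ends of $f$ have $H$-degree at least $3$, since $\deg_H(v)$ equals $\deg_G(v)$ if $v\notin V(R)$ and $\deg_G(v)-1$ if $v\in V(R)$, while the brick $G$ has minimum degree $3$. As already established in Section~\ref{sec:multiple-edges-retracts}, $f$ and $g$ are parallel edges of the bipartite retract $J-R$ of $H-e$; parallel edges of any matching covered graph are trivially removable (swap $f$ for $g$ in any perfect matching containing~$f$), so $f$ is removable in $J-R$.

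To lift this removability to $H-e$, I use the bi-splitting operation from Section~\ref{sec:bicontraction-retract-bisplitting}. The graph $H-e$ is recovered from $J-R$ by bi-splitting the contraction vertex of $y_1$ (and, in the non-adjacent case of Proposition~\ref{prop:f-g-nonadjacent}, also that of $z_1$). Since $f$ is incident with neither $y_1$ nor $z_1$, the same sequence of bi-splits applied to $(J-R)-f$ produces exactly $(H-e)-f$. Each bi-split is legitimate provided both halves $\{y_0,y_2\}$ (and $\{z_0,z_2\}$, in the non-adjacent case) receive at least one edge after the deletion. The halves $y_2$ and $z_2$ are untouched by removing $f$ and clearly retain at least one non-contracted edge; the halves $y_0$ and $z_0$ each lose only the edge $f$, and the assumptions $\deg_H(y_0)\geq 3$ and $\deg_H(z_0)\geq 3$ guarantee that the remaining edges at $y_0$ (respectively $z_0$) still include at least one non-contracted edge. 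Since bi-splitting preserves matching-coveredness, $(H-e)-f$ is matching covered; i.e., $f$ is removable in $H-e$. Now the exchange property (Proposition~\ref{prop:exchange-property-removable-bipmcg}), applied to the pair $e$ and $f$ in the bipartite matching covered graph~$H$, yields that $f$ is removable in~$H$, contradicting the hypothesis.

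The main delicacy is the bi-splitting step: the inequalities $\deg_H(y_0),\deg_H(z_0)\geq 3$ are precisely what is needed so that, after deleting $f$, the preimage of each contraction vertex still has at least one edge available for each side of the split. The two cases from Propositions~\ref{prop:f-g-adjacent} and~\ref{prop:f-g-nonadjacent} (whether $f$ and $g$ are adjacent or not) must be tracked separately, but are handled in exactly the same way.
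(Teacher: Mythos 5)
Your proposal is correct and follows essentially the same route as the paper: both directions are handled the same way, with the converse proved by noting that each end of $f$ retains degree at least two in $H-e-f$, that $f$ is a parallel edge of $J-R$ and hence removable there, that $H-e-f$ is recovered from $J-R-f$ by the same bi-splittings (which is exactly where your degree bound is needed), and finally by invoking the exchange property. Your explicit verification that each half of every bi-split keeps at least one edge after deleting $f$ is the same justification the paper gives, just spelled out in more detail.
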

\begin{proof}
Suppose that $f$ has no cubic end which lies in~$V(R)$.
Consequently, each end of~$f$ has degree two or more in~$H-f$.
Furthermore, since $e$ and $f$ are nonadjacent,
each end of~$f$ has degree two or more in~$H-e-f$ as well.
We will argue $H-e-f$ is matching covered, that is, $f$ is removable in~$H-e$.
The exchange property (Proposition~\ref{prop:exchange-property-removable-bipmcg})
then implies that $f$ is also removable in~$H$.

\smallskip
Note that $f$ is a multiple edge of~$J-R$, whence $J-R-f$ is matching
covered. Note that any graph obtained from a matching covered graph by
means of bi-splitting a vertex is also matching covered;
see \cite[Section~1.5.2]{koth16}.
We will argue that~$H-e-f$ may be obtained from~$J-R-f$ by means of
bi-splitting one or two vertices.

\smallskip
Note that $J$ is obtained from $G-e$ by means of bicontracting one or two vertices (of degree two);
likewise, $J-R$ may be obtained from $H-e$ by means of bicontractions.
Conversely, $H-e$ may be obtained from~$J-R$
by means of bi-splitting one or two vertices;
these are the contraction vertices of~$J$.
As noted earlier, since each end of~$f$ has degree two or more in~$H-e-f$,
we may similarly
obtain~$H-e-f$ from~$J-R-f$ by means of bi-splitting the same vertices.
As discussed above, $H-e-f$ is matching covered; consequently,
$f$ is removable in~$H$.
\end{proof}

We now turn to the situation in which $f$ is non-removable in~$G$.
For convenience, we will state two lemmas
(\ref{lem:index-one-non-removable} and \ref{lem:index-two-non-removable}),
depending on the index of~$e$. These appear in the work
of Carvalho et al. \cite[Lemma~4.2]{clm08} as a single lemma. (In their work,
they deal with the more general context in which $e$ is a thin edge of a brick~$G$,
which need not be near-bipartite.)

\smallskip
The first lemma (\ref{lem:index-one-non-removable})
considers the scenario in which the index of~$e$ is one.
By Proposition~\ref{prop:f-g-nonadjacent}{\it (ii)}, $f$ and $g$ are adjacent;
and by Proposition~\ref{prop:f-g-adjacent}, their
common end~$w$ is non-cubic.

\begin{lem}
{\rm \cite{clm08}}
\label{lem:index-one-non-removable}
Suppose that the index of~$e$ is one.
If $f$ is non-removable in~$G$ then~$f$ has a cubic end which
is adjacent with both ends of~$e$. {\rm (}In particular, the cubic end of~$f$
lies in~$V(R)$.{\rm )} \qed
\end{lem}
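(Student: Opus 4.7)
The plan is to use the non-removability of $f$ to locate a barrier of $G-f$ that contains both ends of $e$, and then to exploit the brick structure and $3$-connectedness of $G$ to pin down the neighbours of $y_0$. Since the index of $e$ is one, Proposition~\ref{prop:f-g-nonadjacent} forces $f$ and $g$ to be adjacent; Proposition~\ref{prop:f-g-adjacent} then tells us that their common end $w$ is non-cubic, so the cubic end of $f$, if one exists, must be $y_0$. The first step is to establish that $G-e-f$ is matching covered: since $f$ and $g$ are parallel edges of the brick $J$, the graph $J-f$ is matching covered (replace $f$ with $g$ in any perfect matching through $f$), and because the index of $e$ is one only $y_1$ is bicontracted in forming $J$ from $G-e$, so undoing that single bi-split recovers $G-e-f$ and preserves matching coveredness. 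Consequently every edge of $G-f$ other than $e$ is admissible in $G-f$; non-removability of $f$ then forces $e$ itself to be inadmissible in $G-f$, i.e.\ $(G-f)-\{y_1,z_1\}$ has no perfect matching.

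By Tutte's Theorem together with a parity argument (using that $|V(G)|$ is even), there exists a set $\bar B\subseteq V(G)$ with $y_1,z_1\in\bar B$ and $|\bar B|\ge 2$ that is a barrier of $G-f$. Since $e$ is the unique edge of $G-f$ joining two vertices of $\bar B$, the set $\bar B$ is also a barrier of $G-e-f$, and hence stable in $G-e-f$ as the latter is matching covered. Because $G$ is a brick, $\bar B$ cannot be a (necessarily trivial) barrier of $G$, so $\mathsf{odd}(G-\bar B)<|\bar B|=\mathsf{odd}((G-f)-\bar B)$. Adding the single edge $f=y_0w$ can reduce the odd-component count by at most two, and parity forces the decrease to be exactly two: $y_0,w\notin\bar B$, and $y_0$ and $w$ lie in distinct odd components $C_{y_0}$ and $C_w$ of $(G-f)-\bar B$ whose union becomes one even component in $G-\bar B$.

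The crux is to show $C_{y_0}=\{y_0\}$. The only $G$-edge leaving $C_{y_0}$ to a non-$\bar B$ vertex is $f$, and a careful alternating-path argument — starting from a perfect matching of the matching-covered graph $G-e-f$ and exploiting the $3$-connectedness of $G$ — should show that $|C_{y_0}|\ge 3$ would permit the construction of a perfect matching of $G-f$ containing $e$, contradicting the inadmissibility of $e$. Granting $C_{y_0}=\{y_0\}$, every neighbour of $y_0$ in $G$ lies in $\bar B\cup\{w\}$; a further exchange argument (rerouting a matching through $y_0$) rules out any neighbour of $y_0$ in $\bar B\setminus\{y_1,z_1\}$ and rules out a fourth neighbour, yielding $N_G(y_0)=\{y_1,w,z_1\}$. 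Since $y_0$ and $z_1$ lie in the same colour class of $H$, the edge $y_0z_1$ must coincide with $\beta\in R$, so $y_0\in V(R)$ and $y_0$ is cubic and adjacent to both ends of $e$. The main obstacle is the reduction of $C_{y_0}$ to a singleton; here one must combine $3$-connectedness of $G$, the stability of $\bar B$ in $G-e-f$, and a careful matching-exchange argument.
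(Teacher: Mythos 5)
First, a point of reference: the paper does not prove this lemma at all --- it is imported verbatim from Carvalho, Lucchesi and Murty \cite[Lemma~4.2]{clm08} --- so your attempt can only be judged on its own merits. The first half of your argument is sound and well organized: since the index of~$e$ is one, $f$ and $g$ are adjacent with non-cubic common end~$w$, so the cubic end must be~$y_0$; the bi-splitting argument showing $G-e-f$ is matching covered is exactly right (it mirrors the paper's proof of Lemma~\ref{lem:non-removable-in-H}); and from it you correctly isolate $e$ as the only possible inadmissible edge of $G-f$ and extract a barrier $\bar B\supseteq\{y_1,z_1\}$ of $G-f$ with $y_0$ and $w$ in distinct odd components $C_{y_0}$ and $C_w$ of $(G-f)-\bar B$.

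The gap is precisely the step you flag as the crux, and the tools you gesture at ($3$-connectedness plus an unspecified alternating-path argument) are not the ones that close it. Two ingredients are needed. First, you must pin down $\bar B=\{y_1,z_1\}$, and this is where the thinness of~$e$ --- which your argument never invokes --- enters: writing $\bar B=T\cup\{y_1,z_1\}$, one checks that $T\cup\{z_1\}$ is a barrier of $G-e$ (restoring $y_1$ and the edge~$f$ merges $C_{y_0}$, $C_w$ and $\{y_1\}$ into one odd component, so the odd-component count drops by exactly one); if $T\neq\emptyset$ this is a nontrivial barrier of $G-e$ containing~$z_1$, whereas the unique maximal nontrivial barrier of $G-e$ is $N_{G-e}(y_1)=\{y_0,y_2\}$ (index one together with Proposition~\ref{prop:equivalent-definition-Rthin}), which does not contain~$z_1$; hence $T=\emptyset$. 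Without this, your later ``exchange argument'' ruling out neighbours of $y_0$ in $\bar B\setminus\{y_1,z_1\}$ has genuine work to do and is not supplied. Second, with $\bar B=\{y_1,z_1\}$, the collapse of $C_{y_0}$ is a tight-cut argument, not a connectivity one: every edge of $\partial(C_{y_0})$ is incident with $y_0$ or with $z_1$ (edges to other components of $(G-f)-\bar B$ reduce to $f$ alone, and the only edge to $y_1$ is $y_0y_1$), so any perfect matching of~$G$ meets $\partial(C_{y_0})$ in at most two edges, hence --- $|C_{y_0}|$ being odd --- in exactly one; thus $\partial(C_{y_0})$ is a tight cut of the brick~$G$, and since the other shore contains $y_1,z_1,w,y_2$, we get $C_{y_0}=\{y_0\}$. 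Minimum degree three then forces $N(y_0)=\{y_1,z_1,w\}$, with $y_0z_1=\beta$ because $e$ removable in~$H$ forces $y_0\in B$. As written, your proposal asserts the conclusion of the crux rather than proving it.
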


As $w$ is non-cubic,
$y_0$ is the cubic end of~$f$,
and it is adjacent with~$z_1$,
as shown in Figure~\ref{fig:index-one-non-removable}a.
Clearly, the edge joining $y_0$ and $z_1$ is none other than $\beta$.

\begin{figure}[!ht]
\smallskip
\centering
\begin{tikzpicture}
\draw (0,-2) -- (-0.35,-2.35);
\draw (0,-2) -- (0.35,-2.35);

\draw (-1.5,0) -- (0,2);
\draw (-1,1)node[nodelabel]{$\beta$};

\draw (-1,-1.2)node[nodelabel]{$f$};
\draw (1,-1.2)node[nodelabel]{$g$};

\draw (-1.5,0) -- (0,-2);
\draw (1.5,0) -- (0,-2);
\draw (0,-2)node{};
\draw (0,-2.35)node[nodelabel]{$w$};

\draw (0,0) -- (0,2);
\draw (0,0) -- (-1.5,0);
\draw (0,0) -- (1.5,0);

\draw (0,0) node{};
\draw (0,-0.35)node[nodelabel]{$y_1$};
\draw (0,2) node[fill=black]{}node[nodelabel,above]{$z_1$};
\draw (-1.5,0) node[fill=black]{}node[nodelabel,left]{$y_0$};
\draw (1.5,0) node[fill=black]{}node[nodelabel,right]{$y_2$};
\draw (0.25,1)node[nodelabel]{$e$};

\draw (0,-3.3)node[nodelabel]{(a)};
\end{tikzpicture}
\hspace*{0.3in}
\begin{tikzpicture}[scale=1]

\draw (7.1,1.9)node[nodelabel]{$e$};
\draw (7.9,-0.2)node[nodelabel]{$f$};
\draw (6.6,0)node[nodelabel]{$g$};

\draw (4,1) -- (6.5,-1);
\draw (9,1) -- (6.5,3);
\draw (5,-0.2)node[nodelabel]{$\alpha$};
\draw (8,2.2)node[nodelabel]{$\beta$};

\draw (4,1) -- (9,1);

\draw (4,1)node{} -- (6.5,3);
\draw (5,1)node[fill=black]{} -- (6.5,-1);
\draw (6,1)node{} -- (6.5,3);
\draw (7,1)node[fill=black]{} -- (6.5,-1);
\draw (8,1)node{} -- (6.5,3)node[fill=black]{};
\draw (9,1)node[fill=black]{} -- (6.5,-1)node{};

\draw (6.5,-2.3)node[nodelabel]{(b)};
\end{tikzpicture}
\vspace*{-0.1in}
\caption{Illustration for Lemma~\ref{lem:index-one-non-removable}}
\label{fig:index-one-non-removable}
\bigskip
\end{figure}

The situation in Lemma~\ref{lem:index-one-non-removable} arises
in truncated biwheels,
as shown in Figure~\ref{fig:index-one-non-removable}.
Note that, every perfect matching which contains $e$ also contains~$f$,
rendering $f$ non-removable.

\smallskip
The second lemma (\ref{lem:index-two-non-removable}) deals with the
scenario in which the index of~$e$ is two, that is, each end of~$e$ is cubic.

\begin{lem}
\label{lem:index-two-non-removable}
{\rm \cite{clm08}}
Suppose that the index of~$e$ is two.
If $f$ is non-removable in~$G$ then the following hold:
\begin{enumerate}[(i)]
\item each end of~$f$ is cubic,
\item consequently, $f$ and $g$ are nonadjacent, and
\item the ends of~$f$ have a common neighbour.
\end{enumerate}
{\rm (}In particular, one of the ends of~$f$ is cubic and it also lies in~$V(R)$.{\rm )}
\qed
\end{lem}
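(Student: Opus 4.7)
The plan is to mirror the unified argument of \cite[Lemma~4.2]{clm08}, which simultaneously yields Lemma~\ref{lem:index-one-non-removable} and the present statement, exploiting the stronger barrier structure available when ${\sf index}(e)=2$. By Proposition~\ref{prop:notions-of-index-coincide}, both ends $y_1$ and $z_1$ of $e$ are cubic in $G$ and $e$ lies in no triangle, so the retract $J$ is obtained from $G-e$ by bicontracting both $y_1$ and $z_1$, producing contraction vertices $\bar y$ and $\bar z$. Combining Proposition~\ref{prop:equivalent-definition-Rthin} with the discussion in Section~\ref{sec:index-rank}, the graph $G-e$ has exactly two maximal nontrivial barriers, $\{z_0,z_2\}\subseteq A$ (with isolated vertex $y_1$ in the complement) and $\{y_0,y_2\}\subseteq B$ (with isolated vertex $z_1$ in the complement), each of size two.

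The heart of the proof is~(i). I would first rule out Case~A of Figure~\ref{fig:f-and-g-adjacent-or-not} (where $f$ and $g$ share a common end $w$, which by Proposition~\ref{prop:f-g-adjacent} is noncubic) by showing that in this configuration $f$ would in fact be removable in~$G$, contradicting the hypothesis. Since $J$ is a brick, $J-f$ is matching covered; bi-splitting $\bar y$ and $\bar z$ (which preserves matching-coveredness, by the discussion in Section~\ref{sec:bicontraction-retract-bisplitting}) yields $G-e-f$ matching covered. To upgrade to $G-f$ being matching covered, it suffices to produce a perfect matching of $G-f$ containing $e$. Using that $w$ is noncubic, $\bar y$ has a neighbour in $J$ other than $w$; picking a perfect matching of $J$ covering $\bar y$ through such an alternative edge and lifting it through the two bicontractions yields the desired matching containing $e$ but avoiding $f$. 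An analogous removability argument handles Case~B when an end of $f$ (say $y_0$ or $z_0$) is assumed noncubic: the extra edges at that noncubic end supply the flexibility to route a perfect matching of $G-f$ that contains $e$. Statement~(ii) then follows immediately from~(i) via Proposition~\ref{prop:f-g-adjacent}, and Proposition~\ref{prop:f-g-nonadjacent} lets us fix $f=y_0z_0$, $g=y_2z_2$.

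For~(iii), I would use the non-removability of~$f$ in~$G$ to extract a nontrivial barrier $T$ of $G-f$ with $|T|\ge 2$; such a $T$ is not a barrier of the brick~$G$, so adding $f$ back to $G-f$ must destroy it, forcing $f$ to connect two specific odd components of $(G-f)-T$. Combined with~(i) and the constraints imposed by the two size-two maximal barriers of~$G-e$ on the local structure around $y_0$ and $z_0$, this forces $y_0$ and $z_0$ to share a common neighbour $v\in T$. The parenthetical conclusion then drops out from bipartiteness of $H=G-R$: the triangle $\{y_0,v,z_0\}$ contains $f=y_0z_0\in H$, so if $v\in A$ the edge $vz_0$ is $A$-$A$ and must equal $\alpha$, placing $z_0\in V(R)$, while if $v\in B$ the edge $y_0v$ is $B$-$B$ and must equal $\beta$, placing $y_0\in V(R)$. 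Either way, an end of~$f$ lies in~$V(R)$ and is cubic by~(i).

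The main obstacle is the matching-lifting step in the proof of~(i): one must produce a perfect matching of $G-f$ that both contains $e$ and avoids $f$, yet the naive lift of a perfect matching of $J-f$ through the two bicontractions always matches $y_1$ to one of $y_0,y_2$ (and symmetrically for $z_1$), never using $e$ itself. Securing a lift that satisfies both conditions simultaneously requires careful choice of the starting matching in $J$ and leverages the noncubicity of the ``bad'' end of $f$ (namely $w$ in Case~A, or $y_0$/$z_0$ in Case~B) to supply extra edges for rerouting around $f$; this is precisely the complication absent in the index-one case, where only one bicontraction is performed and the combinatorics of the lift is correspondingly simpler.
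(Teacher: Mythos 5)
The paper does not prove Lemma~\ref{lem:index-two-non-removable}; it is imported verbatim from \cite[Lemma~4.2]{clm08}, so your argument has to stand on its own, and as written it does not. The entire content of parts (i)--(ii) is concentrated in the claim that when $f$ and $g$ are adjacent (or when an end of $f$ is noncubic) the edge $f$ is removable in $G$. Your reduction of that claim is fine: once one observes that $G-e-f$ is matching covered (by bi-splitting the contraction vertices of $J-f$, exactly as in Lemma~\ref{lem:non-removable-in-H}), all that remains is to exhibit a perfect matching of $G-f$ containing $e$. But that is precisely the step you do not carry out. You propose to obtain it by lifting a perfect matching of $J$ through the two bicontractions, and then in your final paragraph you concede that every such lift matches $y_1$ to one of $y_0,y_2$ and therefore never contains $e$; the promised repair (``careful choice of the starting matching \dots\ leverages the noncubicity'') is never executed, and no argument is supplied for why noncubicity of $w$ should yield a perfect matching of $G-\{y_1,z_1\}$ that avoids $wy_0$. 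The same hand-wave is used for Case~B with a noncubic end of $f$. So the decisive step of (i) is missing, and (ii) depends on it.

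Part (iii) has the same character. You correctly identify the right mechanism --- non-removability of $f$ yields a nontrivial barrier $T$ of $G-f$ such that $f$ must join two odd components of $(G-f)-T$, since $G$ itself has no nontrivial barriers --- and this is indeed how \cite{clm08} argue. But the sentence ``this forces $y_0$ and $z_0$ to share a common neighbour'' is exactly the assertion to be proved, and no deduction from the barrier structure is given. What is correct and worth keeping: the reduction of removability of $f$ to admissibility of $e$ in $G-f$; the implication from (i) to (ii) via Proposition~\ref{prop:f-g-adjacent}; and the derivation of the parenthetical remark from (iii) together with the bipartiteness of $H$ (the common neighbour forces one edge of the resulting triangle to be $\alpha$ or $\beta$). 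A minor slip in your setup: the maximal barrier $\{z_0,z_2\}\subseteq A$ of $G-e$ isolates $z_1$, and $\{y_0,y_2\}\subseteq B$ isolates $y_1$, not the other way around.
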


By statement {\it (i)}, each end of~$f$ is cubic;
thus $f$ and $g$ are nonadjacent (see Proposition~\ref{prop:f-g-adjacent}).
By Proposition~\ref{prop:f-g-nonadjacent},
and as per our notation,
$f=y_0z_0$ and $g=y_2z_2$, as shown in
Figure~\ref{fig:index-two-non-removable}a.
By statement {\it (iii)}, $y_0$~and~$z_0$ have a common neighbour, say~$x$.
Clearly, one of $xy_0$ and $xz_0$ is an edge
of~$R$, depending on whether $x$ lies in~$A$ or in~$B$; however,
these cases are symmetric. Adjust notation so that $x \in B$; thus $xy_0$
is the edge~$\beta$.
Using the fact that $G$ is free of nontrivial barriers, it is easily
verified that $x$ is not an end of~$g$.

\begin{figure}[!ht]
\bigskip
\centering
\begin{tikzpicture}
\draw (-3.3,1)node[nodelabel]{$x$};

\draw (-2.4,0.3)node[nodelabel]{$\beta$};

\draw (-1.5,0) -- (-3,1) -- (-1.5,2);
\draw (-3,1) node[fill=black]{};

\draw (-1.5,0) -- (-1.5,2);
\draw (1.5,0) -- (1.5,2);

\draw (-1.5,2)node{}node[nodelabel,above]{$z_0$} --
(1.5,2)node{}node[nodelabel,above]{$z_2$};

\draw (-1.75,1)node[nodelabel]{$f$};
\draw (1.75,1)node[nodelabel]{$g$};

\draw (0,0) -- (0,2);
\draw (0,0) -- (-1.5,0);
\draw (0,0) -- (1.5,0);

\draw (0,0) node{}node[nodelabel,below]{$y_1$};
\draw (0,2) node[fill=black]{}node[nodelabel,above]{$z_1$};
\draw (-1.5,0) node[fill=black]{}node[nodelabel,below]{$y_0$};
\draw (1.5,0) node[fill=black]{}node[nodelabel,below]{$y_2$};
\draw (0.25,1)node[nodelabel]{$e$};

\draw (0,-1.5)node[nodelabel]{(a)};

\end{tikzpicture}
\hspace*{0.5in}
\begin{tikzpicture}

\draw (-2.4,0.3)node[nodelabel]{$\beta$};
\draw (2.3,1.65)node[nodelabel]{$\alpha$};

\draw (-3,1) to [out=80,in=180] (0,3.5) to [out=0,in=100] (3,1);
\draw (1.5,0) -- (3,1) -- (1.5,2);
\draw (3,1)node{};

\draw (-1.5,0) -- (-3,1) -- (-1.5,2);
\draw (-3,1) node[fill=black]{};

\draw (-1.5,0) -- (-1.5,2);
\draw (1.5,0) -- (1.5,2);

\draw (-1.5,2)node{} -- (1.5,2)node{};

\draw (-1.75,1)node[nodelabel]{$f$};
\draw (1.75,1)node[nodelabel]{$g$};

\draw (0,0) -- (0,2);
\draw (0,0) -- (-1.5,0);
\draw (0,0) -- (1.5,0);

\draw (0,0) node{};
\draw (0,2) node[fill=black]{};
\draw (-1.5,0) node[fill=black]{};
\draw (1.5,0) node[fill=black]{};
\draw (0.25,1)node[nodelabel]{$e$};

\draw (0,-1.5)node[nodelabel]{(b)};

\end{tikzpicture}
\vspace*{-0.1in}
\caption{Illustration for Lemma~\ref{lem:index-two-non-removable}}
\label{fig:index-two-non-removable}
\bigskip
\end{figure}

The situation in Lemma~\ref{lem:index-two-non-removable} is observed
in staircases, as shown in Figure~\ref{fig:index-two-non-removable}b.
The edge~$f$ is non-removable since every perfect matching which contains~$e$
also contains~$f$.

\smallskip
Finally, we turn to the case in which $f$ is removable in~$G$ but it is not thin.
This is handled by Lemma~\ref{lem:removable-not-thin}
which appears in the work of Carvalho et al. \cite[Lemma~4.3]{clm08}.

\begin{lem}
\label{lem:removable-not-thin}
{\rm \cite{clm08}}
If $f$ is removable in~$G$ but it is not thin then the following hold:
\begin{enumerate}[(i)]
\item the index of~$e$ is two,
\item $f$ and $g$ are adjacent and their common end $w$ is not adjacent with
any end of~$e$,
\item $g$ is a thin edge, and
\item $N(y_0) \subseteq N(z_1) \cup \{w\}$;
recall that $y_0$ is the other end of~$f$,
and $z_1$ is the end of~$e$ not adjacent with~$y_0$. \qed
\end{enumerate}
\end{lem}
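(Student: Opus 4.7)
The plan is to exploit the $R$-thinness of $e$, which by Proposition~\ref{prop:equivalent-definition-Rthin} forces every barrier of $G-e$ to have at most two vertices, in order to rigidly constrain the structure around the non-thin edge~$f$. Since $f$ is removable in $G$ but not thin, either $G-f$ fails to be a near-brick, or $G-f$ is a near-brick whose retract is not a brick; in either case there is a witness --- a nontrivial barrier $B$ of $G-f$ with $|B|\ge 3$, or a tight cut of $G-f$ corresponding to a $2$-separation with an even component. I would compare such a witness against the situation in $G-e$ and show that unless we are in the highly restrictive configuration~(i)-(iv), the witness transports to a forbidden barrier of~$G-e$ and contradicts the $R$-thinness of~$e$.

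For~(i), I would rule out the index-one case. Suppose the index of~$e$ is one, so~$z_1$ is non-cubic and (by Proposition~\ref{prop:f-g-nonadjacent}) $f$ and $g$ share a common end~$w$; by Proposition~\ref{prop:f-g-adjacent}, $w$ is non-cubic. In this setting the only vertex whose bicontraction distinguishes~$G-e$ from~$G-f$ at the level of retracts is~$y_1$, whose three neighbours $y_0, y_2, z_1$ are rigidly linked. A careful accounting of how a large barrier (or $2$-separation cut) of~$G-f$ behaves when~$f$ is restored and~$e$ is deleted instead shows that one obtains a barrier of~$G-e$ of size at least three, violating~$R$-thinness. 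Hence the index of~$e$ must be two.

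With index two, both ends of~$e$ are cubic and both are bicontracted in forming the retract~$J$. For~(ii), the nonadjacent case of~$f$ and~$g$ (Figure~\ref{fig:f-and-g-adjacent-or-not}(b)) is essentially symmetric in the $y$- and $z$-coordinates; a parallel barrier analysis again transfers an obstruction in~$G-f$ to one in~$G-e$, forbidden by~$R$-thinness. Thus $f$ and~$g$ are adjacent at a single vertex~$w$, and~$w$ is not adjacent to any end of~$e$: adjacency of~$w$ to~$z_1$ (it cannot be to~$y_1$ since $N(y_1)=\{y_0,y_2,z_1\}$, none of which equals~$w$) would yield $\{y_0, y_2, w\}$ (or equivalently $\{y_0, y_2, z_1\}$ together with its cubic partner) as a nontrivial barrier of~$G-e$, again impossible. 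For~(iii), the same analysis applied with the roles of~$f$ and~$g$ swapped shows that non-thinness of~$g$ would produce a second independent obstruction; combined with the one for~$f$, it would yield barrier or cut structure in~$G-e$ that exceeds what $R$-thinness permits. So~$g$ is thin. For~(iv), I would identify the precise barrier of~$G-f$ witnessing non-thinness: it is essentially $\bigl(N(z_1)\cup\{w\}\bigr)\setminus\{y_0\}$, and every neighbour of~$y_0$ other than~$w$ must lie in this barrier, which is precisely the statement $N(y_0)\subseteq N(z_1)\cup\{w\}$.

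The principal obstacle is the clean transfer of a barrier (or $2$-separation cut) of~$G-f$ into a structural constraint on~$G-e$: such a transfer is not literal, because a barrier of~$G-f$ need not be a barrier of~$G-e$, and the translation must be mediated through the simultaneous perfect matching structure of~$G-e-f$, the bicontractions at~$y_1$ and possibly~$z_1$, and the bipartition of the underlying graph~$H=G-R$. The real work lies in the bookkeeping required to disentangle the roles of~$y_0, y_2, z_0, z_2, w$ under these parity and adjacency constraints, and in verifying that among all possible configurations only the one described in~(iv) survives.
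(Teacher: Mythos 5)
There is nothing in the paper to compare against here: Lemma~\ref{lem:removable-not-thin} is imported verbatim from \cite[Lemma~4.3]{clm08} and stated without proof, so your attempt must stand on its own. It does not: as written it is a strategy outline rather than a proof. The one mechanism that would carry all four conclusions --- transporting a witness of the non-thinness of $f$ (a large barrier or a $2$-separation cut of $G-f$) into a barrier of $G-e$ of size at least three --- is invoked separately for (i), (ii) and (iii) but never executed even once; you yourself defer it as ``the real work''. Since that transfer is exactly where the lemma's content lies (a barrier of $G-f$ contains no end of $f$, its isolated vertices must be tracked through the bicontractions at $y_1$ and $z_1$, and one must rule out the $2$-separation alternative), nothing has actually been proved. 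Parts (iii) and (iv) in particular are little more than restatements of the desired conclusions.

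The one concrete deduction you do offer is incorrect. For (ii) you claim that if $w$ were adjacent to $z_1$ then $\{y_0,y_2,w\}$ would be a nontrivial barrier of $G-e$. But $y_0,y_2\in B$ while $w\in A$ (since $f=wy_0\notin R$), and no barrier of $G-e$ can meet both colour classes: $e$ is $R$-compatible, so $H-e$ is matching covered and $(H-e)-\{a,b\}$ has a perfect matching for every $a\in A$, $b\in B$ by Proposition~\ref{prop:characterizations-of-bipmcg}, which kills any barrier containing vertices of both classes. Your fallback set $\{y_0,y_2,z_1\}\subseteq B$ fares no better: the $R$-thinness of $e$ forces the maximal nontrivial barrier of $G-e$ containing $y_0$ and $y_2$ to be exactly $\{y_0,y_2\}$, so that set is \emph{not} a barrier of $G-e$, and to get a contradiction you would first have to show that the hypothesis on $f$ forces it to be one --- which is again the missing step. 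To repair the argument you need the actual barrier analysis of \cite{clm08}: take a maximal barrier $S$ of $G-f$ with $|S|\ge 3$ (or the $2$-separation witness), observe that both ends of $f$ avoid $S$ and that $f$ joins two odd components of $(G-f)-S$, and then use the known barrier structure of $G-e$ (both maximal nontrivial barriers have size two) to pin down $S$, which is what ultimately yields $w\notin\{z_0,z_2\}$, the thinness of $g$, and the containment $N(y_0)\subseteq\{y_1,z_0,z_2,w\}$.
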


The lemma concludes that the index of~$e$ is two; that is, its end~$z_1$ is cubic,
and as per our notation,
the neighbours of~$z_1$ are precisely $y_1, z_0$ and $z_2$.
Furthermore, it concludes that $f$ and $g$ are adjacent and that their
common end~$w$ is distinct from each of $z_0$~and~$z_2$,
as shown in Figure~\ref{fig:removable-not-thin}a.
Another consequence which may be inferred from their proof
is that all of the neighbours of~$y_0$ lie in the
set~$N(z_1) \cup \{w\} = \{w,y_1,z_0,z_2\}$. (This is not stated explicitly
in the statement of \cite[Lemma 4.3]{clm08}.)
Since $y_0$ has degree at least three, we may adjust notation so that
$y_0$ is adjacent with~$z_0$, and it may or may not be adjacent with~$z_2$.

\begin{figure}[!ht]
\medskip
\centering
\begin{tikzpicture}
\draw (0,-2) -- (-0.35,-2.35);
\draw (0,-2) -- (0.35,-2.35);

\draw (-1.5,0) -- (-1.5,2);

\draw (-1.5,2) -- (1.5,2);
\draw (-1.5,2) node{}node[nodelabel,above]{$z_0$};
\draw (1.5,2) node{}node[nodelabel,above]{$z_2$};

\draw (-1,-1.2)node[nodelabel]{$f$};
\draw (1,-1.2)node[nodelabel]{$g$};

\draw (-1.5,0) -- (0,-2);
\draw (1.5,0) -- (0,-2);
\draw (0,-2)node{};
\draw (0,-2.35)node[nodelabel]{$w$};

\draw (0,0) -- (0,2);
\draw (0,0) -- (-1.5,0);
\draw (0,0) -- (1.5,0);

\draw (0,0) node{};
\draw (0,-0.35)node[nodelabel]{$y_1$};
\draw (0,2) node[fill=black]{}node[nodelabel,above]{$z_1$};
\draw (-1.5,0) node[fill=black]{}node[nodelabel,left]{$y_0$};
\draw (1.5,0) node[fill=black]{}node[nodelabel,right]{$y_2$};
\draw (0.25,1)node[nodelabel]{$e$};

\draw (0,-3.3)node[nodelabel]{(a)};

\end{tikzpicture}
\hspace*{0.3in}
\begin{tikzpicture}[scale=1]

\draw (3.75,1.7)node[nodelabel]{$f$};
\draw (4.75,1.25)node[nodelabel]{$g$};

\draw (2.25,0.25)node[nodelabel]{$e$};

\draw (0,-0.4)node[nodelabel]{$z_2$};
\draw (4.5,-0.4)node[nodelabel]{$y_2$};
\draw (4.87,2.5)node[nodelabel]{$w$};

\draw (1.5,-0.4)node[nodelabel]{$z_1$};
\draw (3,-0.4)node[nodelabel]{$y_1$};
\draw (2.95,1.85)node[nodelabel]{$y_0$};
\draw (1.55,1.85)node[nodelabel]{$z_0$};

\draw (-1.6,2.5)node[nodelabel]{$\alpha$};
\draw (6.1,2.5)node[nodelabel]{$\beta$};

\draw[thick] (0,0) to [out=150,in=210] (0,5);
\draw[thick] (4.5,0) to [out=30,in=330] (4.5,5);

\draw (0,5)node{} -- (4.5,5)node[fill=black]{} -- (4.5,2.5) -- (3,3.5) -- (1.5,3.5) -- (0,2.5) -- (0,5);
\draw (1.5,5)node[fill=black]{} -- (1.5,3.5)node{};
\draw (3,5)node{} -- (3,3.5)node[fill=black]{};

\draw (0,0) -- (4.5,0) -- (4.5,2.5) -- (3,1.5) -- (1.5,1.5) -- (0,2.5) -- (0,0);
\draw (1.5,0) -- (1.5,1.5);
\draw (3,0) -- (3,1.5);

\draw (0,0)node{};
\draw (4.5,0)node[fill=black]{};
\draw (4.5,2.5)node{};
\draw (0,2.5)node[fill=black]{};
\draw (1.5,0)node[fill=black]{};
\draw (1.5,1.5)node{};
\draw (3,0)node{};
\draw (3,1.5)node[fill=black]{};

\draw (2.25,-1.2)node[nodelabel]{(b)};

\end{tikzpicture}
\caption{Illustration for Lemma~\ref{lem:removable-not-thin}}
\label{fig:removable-not-thin}
\end{figure}

The situation in Lemma~\ref{lem:removable-not-thin} is best illustrated
by a double ladder of type~I in which at least one of the two {\Rladder}s
is of order eight, as shown in Figure~\ref{fig:removable-not-thin}b.
The edge $e$ is $R$-thin; deleting it and taking the
retract yields the staircase~$St_{10}$ with multiple edges, two of which
are $f$ and $g$. It may be verified that both $f$ and $g$ are removable,
but of them only $g$ is thin.

\subsection{Proof of the $R$-biwheel Theorem}
\label{sec:proof-Rbiwheel-theorem}

In this section, we prove the $R$-biwheel
Theorem (\ref{thm:Rbiwheel-configuration});
our proof is along the same lines as that of \cite[Theorem~4.6]{clm08}.
Before that, we need one more lemma pertaining to the structure
of \Rthin\ edges of index one (in an \Rbrick\ which is free of \sRthin\ edges).

\begin{lem}
\label{lem:index-one-Rthin-edge}
Let $G$ be a simple $R$-brick which is free of strictly $R$-thin edges,
$e$ an \Rthin\ edge whose index is one, and $y_1$ the cubic end of~$e$.
Let $y_0$ and $y_2$ denote the neighbours of~$y_1$ in~$G-e$.
Then $y_0$ and $y_2$ are both cubic, and they have a common neighbour~$w$
which is non-cubic. Let $f:=wy_0$ and $g:=wy_2$. Furthermore,
the following statements hold:
\begin{enumerate}[(i)]
\item if $f$ is not $R$-compatible then $y_0 \in V(R)$, and
\item if $f$ is $R$-compatible then it is $R$-thin and its index is one.
\end{enumerate}
(Similar statements also apply to~$g$.)
\end{lem}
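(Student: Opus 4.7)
The plan is to apply the framework of Section~\ref{sec:multiple-edges-retracts} to the edge $e$. By Proposition~\ref{prop:notions-of-index-coincide}(ii), the hypothesis ${\sf index}(e)=1$ forces exactly one end of $e$ to be cubic, namely $y_1$, while $z_1$ has degree at least four. Since $e$ is \Rthin\ and $G$ has no \sRthin\ edges, $e$ cannot be strictly thin, so the retract $J$ of $G-e$ carries a pair of multiple edges. The unique degree-two vertex of $G-e$ is $y_1$, and bicontracting it merges $\{y_0, y_1, y_2\}$ into a single vertex $y^*$; thus the multiple edges of $J$ arise from two edges $f = y_0 w$ and $g = y_2 w'$ of $G$. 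If $f$ and $g$ were nonadjacent, Proposition~\ref{prop:f-g-nonadjacent} would give ${\sf index}(e) = 2$, a contradiction; hence $w=w'$, so $y_0$ and $y_2$ share the common neighbour $w$, and Proposition~\ref{prop:f-g-adjacent} shows that $w$ is non-cubic.

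I would next prove~(i) and, as part of the same analysis, the cubicity of $y_0$ in this case. Suppose $f$ is not \Rcomp; then $f$ is either non-removable in $H$ or non-removable in $G$. In the first subcase, Lemma~\ref{lem:non-removable-in-H} produces a cubic end of $f$ lying in $V(R)$; since $w$ is non-cubic, this end must be $y_0$, so $y_0$ is cubic and $y_0 \in V(R)$. In the second subcase, Lemma~\ref{lem:index-one-non-removable} produces a cubic end of $f$ adjacent to both ends of $e$; again this end must be $y_0$, and since $y_0$ and $z_1$ both lie in $B$, the edge $y_0 z_1$ must coincide with $\beta$, so $y_0 \in V(R)$. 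This proves~(i) and the cubicity of $y_0$ whenever $f$ is not \Rcomp.

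Finally, for~(ii), assume that $f$ is \Rcomp. If $f$ were removable but not thin, Lemma~\ref{lem:removable-not-thin} would force ${\sf index}(e) = 2$, contradicting our hypothesis; so $f$ is thin, hence \Rthin. Should $y_0$ be non-cubic, then both ends of $f$ would be non-cubic, and Proposition~\ref{prop:notions-of-index-coincide}(i) would give ${\sf index}(f) = 0$; consequently $G-f$ has no vertex of degree two, its retract coincides with $G-f$ itself, and the latter is simple, making $f$ an \sRthin\ edge and contradicting the hypothesis on $G$. Hence $y_0$ is cubic, and applying Proposition~\ref{prop:notions-of-index-coincide}(ii) to $f$ (with $y_0$ cubic and $w$ non-cubic) yields ${\sf index}(f) = 1$, completing~(ii). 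The analogous statements for $g$ and $y_2$ follow by the symmetric argument, swapping $(y_0, f)$ with $(y_2, g)$. The main subtlety lies in carefully routing the non-removability analysis through Lemmas~\ref{lem:non-removable-in-H}, \ref{lem:index-one-non-removable}, and \ref{lem:removable-not-thin}, using the non-cubicity of $w$ at each step to force $y_0$ to play the role of the ``cubic end of $f$'' identified by these lemmas.
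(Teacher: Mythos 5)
Your proposal is correct and follows essentially the same route as the paper's proof: you identify the common non-cubic neighbour $w$ from the multiple edges of the retract via Propositions~\ref{prop:f-g-nonadjacent} and \ref{prop:f-g-adjacent}, handle the non-\Rcomp\ case through Lemmas~\ref{lem:non-removable-in-H} and \ref{lem:index-one-non-removable}, and handle the \Rcomp\ case through Lemma~\ref{lem:removable-not-thin} together with the absence of \sRthin\ edges to force $y_0$ cubic and ${\sf index}(f)=1$. The only differences are cosmetic (e.g., your explicit contradiction argument for why $f$ and $g$ must be adjacent, which the paper leaves implicit).
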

\begin{proof}
Let $J$ denote the retract of~$G-e$, that is, $J$ is obtained from~$G-e$
by bicontracting the vertex~$y_1$. By hypothesis, $e$ is not strictly thin, whence
$J$ has multiple edges. This implies that $G$ has a vertex~$w$,
distinct from~$y_1$, that is adjacent to both $y_0$ and $y_2$,
as shown in Figure~\ref{fig:f-and-g-adjacent-or-not}a.
As in the statement of the lemma, let $f:=wy_0$ and $g:=wy_2$.
By Proposition~\ref{prop:f-g-adjacent}, $w$ has degree four or more.

\smallskip
First consider the case in which $f$ is not \Rcomp. That is, either $f$
is not removable in~$H$ or it is not removable in~$G$, and it follows from
Lemma~\ref{lem:non-removable-in-H} or from
Lemma~\ref{lem:index-one-non-removable},
respectively, that the end~$y_0$ of $f$ is cubic and it lies in~$V(R)$.

\smallskip
Now consider the case in which $f$ is \Rcomp.
Since the index of~$e$ is one,
Lemma~\ref{lem:removable-not-thin} implies that $f$ is thin, whence it is \Rthin.
By hypothesis, $f$ is not strictly \Rthin. Consequently, the end~$y_0$ of $f$ is cubic,
and the index of~$f$ is one. Applying a similar argument to the edge~$g$, we may
conclude that $y_2$ is also cubic.
\end{proof}

\begin{proofOf}{the $R$-biwheel
Theorem~(\ref{thm:Rbiwheel-configuration})}
As in the statement of the theorem, let $G$ be a simple $R$-brick which
is free of strictly \Rthin\ edges, and let $e$ denote an \Rthin\ edge
whose index is one.
Our goal is to show that $G$ has an \Rbiwheel\ of which $e$ is an internal
spoke.

\smallskip
As in the statement of Lemma~\ref{lem:index-one-Rthin-edge}, we let
$y_1$ denote the cubic end of~$e$, and $y_0$~and~$y_2$ the neighbours
of~$y_1$ in~$G-e$. By the lemma,
$y_0$~and~$y_2$ are both cubic, and they have a common neighbour~$w$
which is non-cubic. We denote by~$u$ the non-cubic end of~$e$,
as shown in Figure~\ref{fig:index-one-Rthin-edge}.
Observe that $y_0y_1y_2$ is a path in~$H-\{u,w\}$.

\begin{figure}[!ht]
%\smallskip
\centering
\begin{tikzpicture}
\draw (0,-2) -- (-0.35,-2.35);
\draw (0,-2) -- (0.35,-2.35);

\draw (1.5,0) -- (2,0);
\draw (-1.5,0) -- (-2,0);

\draw (-1,-1.2)node[nodelabel]{$f$};
\draw (1,-1.2)node[nodelabel]{$g$};

\draw (-1.5,0) -- (0,-2);
\draw (1.5,0) -- (0,-2);
\draw (0,-2)node{};
\draw (0,-2.35)node[nodelabel]{$w$};

\draw (0,0) -- (0,2);
\draw (0,0) -- (-1.5,0);
\draw (0,0) -- (1.5,0);

\draw (0,0) node{};
\draw (0,-0.35)node[nodelabel]{$y_1$};
\draw (0,2) node[fill=black]{}node[nodelabel,above]{$u$};
\draw (-1.5,0) node[fill=black]{};
\draw (-1.5,0.35) node[nodelabel]{$y_0$};
\draw (1.5,0) node[fill=black]{};
\draw (1.5,0.35) node[nodelabel]{$y_2$};
\draw (0.25,1)node[nodelabel]{$e$};

\end{tikzpicture}
\caption{$e$ is an $R$-thin edge of index one; $y_0, y_1$ and $y_2$ are cubic;
$u$ and $w$ are non-cubic}
\label{fig:index-one-Rthin-edge}
\bigskip
\end{figure}

We let $P:=v_1v_2 \dots v_j$, where $j \geq 3$, be a path of maximum
length in the graph \mbox{$H-\{u,w\}$} that has the following properties
(see Figure~\ref{fig:illustration-for-Rbiwheel-theorem}):

\begin{enumerate}[(i)]
\item $y_1$ is an internal vertex of~$P$,
\item every vertex of~$P$ is cubic in~$G$; furthermore, if it lies in~$A$ then
it is adjacent with~$u$, and if it lies in~$B$ then it is adjacent with~$w$, and
\item for every internal vertex~$v_i$ of~$P$, the edge that joins $v_i$
to one of $u$ and $w$ is \Rthin\ of index one.
\end{enumerate}

(Note that the path $y_0y_1y_2$ shown in Figure~\ref{fig:index-one-Rthin-edge}
satisfies all of the above properties; thus such a path~$P$ exists.)

\begin{figure}[!ht]
\centering
\begin{tikzpicture}

\draw (-0.5,0) -- (9.5,0);
\draw (0,0)node[fill=black]{} -- (4.5,-2.5);
\draw (1.5,0)node{} -- (4.5,2.5);
\draw (3,0)node[fill=black]{} -- (4.5,-2.5);
\draw (4.5,0)node{} -- (4.5,2.5);
\draw (6,0)node[fill=black]{} -- (4.5,-2.5);
\draw (7.5,0)node{} -- (4.5,2.5)node[fill=black]{};
\draw (9,0)node[fill=black]{} -- (4.5,-2.5)node{};

\draw (0,0.35)node[nodelabel]{$v_1$};
\draw (1.5,-0.35)node[nodelabel]{$v_2$};
\draw (3,0.35)node[nodelabel]{$v_3$};
\draw (4.5,-0.35)node[nodelabel]{$v_4$};
\draw (6,0.35)node[nodelabel]{$v_5$};
\draw (7.5,-0.35)node[nodelabel]{$v_6$};
\draw (9,0.35)node[nodelabel]{$v_7$};

\draw (4.5,2.85)node[nodelabel]{$u$};
\draw (4.5,-2.85)node[nodelabel]{$w$};

\end{tikzpicture}
\caption{Illustration for the $R$-biwheel Theorem}
\label{fig:illustration-for-Rbiwheel-theorem}
\bigskip
\end{figure}

We adjust notation so that $v_1$ lies in~$B$
as shown in Figure~\ref{fig:illustration-for-Rbiwheel-theorem}.
It should be noted that the other end of~$P$, namely $v_j$,
may lie in~$A$ or in~$B$, depending on whether $P$ is an odd path
or even.
We shall let $K$ denote the subgraph of~$H$, which has vertex set
$V(P) \cup \{u,w\}$ and edge set
$E(P) \cup \{v_iw : 1 \leq i \leq j{\rm ~and~}i {\rm ~odd}\}
\cup \{v_iu : 1 \leq i \leq j{\rm ~and~}i {\rm ~even}\}$.

\smallskip
Our goal is to show that $K$ is an \Rbiwheel. To this end, we need
to establish two additional properties of the path~$P$: first, that it is an odd path;
and second, that both its ends $v_1$ and $v_j$ lie in~$V(R)$.

\smallskip
We begin by arguing that the two ends of~$P$ are nonadjacent (in~$G$).
Suppose not, that is, say~$v_1v_j$ is an edge of~$G$.
Since each vertex of~$P$ is cubic, it follows that $V(G) = V(K)$; since otherwise
$\{u,w\}$ is a $2$-vertex-cut of~$G$, and we have a contradiction.
Since $G$ has an even number of vertices, $P$ is of odd length.
Furthermore, either $G$ is the same as~$K$, or otherwise,
$G$ has an additional edge joining $u$ and $w$. In both cases, the graph~$G$
is bipartite; this is absurd. Thus $v_1$ and $v_j$ are nonadjacent.

\smallskip
Now, let $f$ denote the edge~$v_1w$. We will argue that $f$ is not
\Rcomp, and then use this fact to deduce that $v_1 \in V(R)$.
 Suppose instead that $f$ is \Rcomp.
Applying Lemma~\ref{lem:index-one-Rthin-edge}{\it (ii)}, with~$v_2u$
playing the role of~$e$, we conclude that $f$ is \Rthin\ and its index is one.
Let $v_0$ denote the neighbour of~$v_1$ which is distinct from~$v_2$
and $w$; note that $v_0 \in A$. By the preceding paragraph, $v_0$ is distinct
from~$v_j$, and since each vertex of~$P$ is cubic, $v_0$ is not in~$V(P)$.
Applying Lemma~\ref{lem:index-one-Rthin-edge} again, this time with~$f$ playing
the role of~$e$, we deduce that $v_0$ is cubic.
Furthermore, $v_0$ and $v_2$ have a common neighbour
whose degree is four or more; thus $v_0$ is adjacent with~$u$.
Observe that the path~$v_0v_1P$ contradicts the maximality of~$P$.
We conclude that $f=v_1w$ is not \Rcomp.
By Lemma~\ref{lem:index-one-Rthin-edge}{\it (i)},
the cubic end~$v_1$ of~$f$
lies in~$V(R)$.

\smallskip
A similar argument shows that $v_j$ lies in~$V(R)$.
Since $v_1$ and $v_j$ are nonadjacent, one of them lies in~$A$ and the other
one lies in~$B$. (As per our notation, $v_1 \in B$ and $v_j \in A$.)
In particular, $P$ is an odd path, and thus $K$ is an \Rbiwheel.
Observe that by property (i) of the path~$P$, the end~$y_1$ of $e$
is an internal vertex of~$P$, whence $e$ is an internal spoke of~$K$, as desired.
This completes the proof of Theorem~\ref{thm:Rbiwheel-configuration}.
\end{proofOf}

\subsection{Proof of the $R$-ladder Theorem}
\label{sec:proof-Rladder-theorem}

Here, we prove the \mbox{$R$-ladder} Theorem (\ref{thm:Rladder-configuration});
its proof is significantly longer than that of the
\mbox{$R$-biwheel} Theorem.
In its proof, we will need two lemmas
(\ref{lem:index-two-Rthin-edge-no-common-neighbour}
and \ref{lem:index-two-Rthin-edge-common-neighbour}),
each of which pertains to the structure of \Rthin\ edges of index two
(in an \Rbrick\ which is free of \sRthin\ edges);
these lemmas correspond to two cases that appear in the proof
of Theorem~\ref{thm:Rladder-configuration}.

\begin{lem}
\label{lem:index-two-Rthin-edge-no-common-neighbour}
Let $G$ be a simple \Rbrick\ which is free of strictly \Rthin\ edges
and $e:=y_1z_1$ an \Rthin\ edge whose index is two.
Let $y_0$~and~$y_2$ denote the neighbours of~$y_1$
which are distinct from~$z_1$,
and let $z_0$~and~$z_2$ denote the neighbours of~$z_1$
which are distinct from~$y_1$.
Suppose that $y_1$ is the only common neighbour of $y_0$~and~$y_2$,
and that $z_1$ is the only common neighbour of $z_0$~and~$z_2$.
Then there are precisely two (nonadjacent) edges, say $f$~and~$g$,
between $\{y_0,y_2\}$ and $\{z_0,z_2\}$. Adjust notation so that
$f:=y_0z_0$ and $g:=y_2z_2$. Furthermore, the following statements hold:
\begin{enumerate}[(i)]
\item if $f$ is not \Rcomp\ then an end of~$f$ is cubic and it lies in~$V(R)$,
and
\item if $f$ is \Rcomp\ then it is \Rthin\ and its index is two.
\end{enumerate}
(Similar statements also apply to~$g$.)
\end{lem}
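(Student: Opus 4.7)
The plan is to locate the multiple-edge pair in the retract $J$ of $G-e$ and then transfer, via the lemmas of Section~\ref{sec:multiple-edges-retracts} and Lemma~\ref{lem:index-one-Rthin-edge}, the various obstructions over to the edge~$f$ itself. The argument follows the template used for Lemma~\ref{lem:index-one-Rthin-edge}, but with the hypothesis on common neighbours doing the work previously done by the explicit non-cubic vertex~$w$.

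First I would identify~$f$ and~$g$. Since $e$ is $R$-thin of index two and $G$ has no strictly $R$-thin edges, both $y_1$ and $z_1$ get bicontracted when passing from $G-e$ to $J$, and $J$ contains a pair of multiple edges. Any such pair cannot be adjacent in~$G$: if they shared a common end~$w$, then $w$ would lie in~$J$ (so $w\notin\{y_1,z_1\}$) and would be a common neighbour either of $\{y_0,y_2\}$ or of $\{z_0,z_2\}$, contradicting the hypothesis. By Proposition~\ref{prop:f-g-nonadjacent}, the nonadjacent multiple edges have one end in $\{y_0,y_2\}$ and the other in $\{z_0,z_2\}$. The hypothesis also kills the potential edges $y_0z_2$ and $y_2z_0$, since each would give a second common neighbour (of $\{z_0,z_2\}$ and of $\{y_0,y_2\}$ respectively). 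Hence precisely two edges lie between $\{y_0,y_2\}$ and $\{z_0,z_2\}$, and after relabeling we may take $f:=y_0z_0$ and $g:=y_2z_2$.

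For part~(i), I would split according to how $f$ fails to be $R$-compatible. If $f$ is non-removable in~$H$, Lemma~\ref{lem:non-removable-in-H} directly yields a cubic end of~$f$ in~$V(R)$. If instead $f$ is non-removable in~$G$, then since the index of~$e$ is two, Lemma~\ref{lem:index-two-non-removable} applies and its parenthetical conclusion again supplies a cubic end of~$f$ lying in~$V(R)$.

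For part~(ii), assume $f$ is $R$-compatible. I first show $f$ is thin: if not, Lemma~\ref{lem:removable-not-thin} would force $f$ and $g$ to be adjacent, contradicting the nonadjacency established above; thus $f$ is thin and therefore $R$-thin. To pin the index of~$f$ to two, I would rule out index one. Since $G$ is free of strictly $R$-thin edges, at least one end of~$f$ is cubic; if the index were one, exactly one end would be cubic---say $y_0$, the case of $z_0$ being symmetric via the hypothesis on $\{z_0,z_2\}$. Applying Lemma~\ref{lem:index-one-Rthin-edge} to~$f$ with~$y_0$ in the role of the cubic end, the two neighbours of~$y_0$ in~$G-f$, namely $y_1$ and the third neighbour $y_0'$ of~$y_0$, would both be cubic and would share a non-cubic common neighbour~$w$. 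Since $N(y_1)=\{y_0,y_2,z_1\}$ and both $y_0$ and $z_1$ are cubic, only $w=y_2$ is possible, giving $y_0'\sim y_2$; but then $y_0'$ is a common neighbour of $y_0$ and $y_2$ distinct from $y_1$, contradicting the hypothesis. So the index of~$f$ is two. The main obstacle lies in this last step: pinning the index down requires combining the nonadjacency of $f$ and $g$, the absence of strictly $R$-thin edges in~$G$, and the single-common-neighbour hypothesis, all funnelled through Lemma~\ref{lem:index-one-Rthin-edge}.
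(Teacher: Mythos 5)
Your proof is correct and follows essentially the same route as the paper: the same derivation of the two nonadjacent edges from the common-neighbour hypotheses, and the same appeals to Lemmas~\ref{lem:non-removable-in-H}, \ref{lem:index-two-non-removable} and \ref{lem:removable-not-thin} for part~(i) and for thinness in part~(ii). The only (cosmetic) difference is in pinning the index to two: the paper argues directly that if one end of~$f$ were non-cubic the retract of~$G-f$ would be simple, whereas you package the same retract-multiplicity observation through Lemma~\ref{lem:index-one-Rthin-edge}; both hinge on the identical contradiction with the single-common-neighbour hypothesis.
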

\begin{proof}
Let $J$ denote the retract of~$G-e$, that is, $J$ is obtained from~$G-e$
by bicontracting vertices $y_1$~and~$z_1$. By hypothesis, $e$ is not
strictly thin, whence $J$ has multiple edges.
Also, as stated in the assumptions, $y_1$ is the only common
neighbour of $y_0$~and~$y_2$, and likewise, $z_1$ is the only common
neighbour of $z_0$~and~$z_2$. It follows that there are precisely two nonadjacent
edges between $\{y_0,y_2\}$ and $\{z_0,z_2\}$,
as shown in Figure~\ref{fig:f-and-g-adjacent-or-not}b.
As in the statement,
adjust notation so that $f:=y_0z_0$ and $g:=y_2z_2$.

\smallskip
First consider the case in which $f$ is not \Rcomp.
That is, either $f$ is not removable in~$H$ or it is not removable in~$G$,
and it follows from
Lemma~\ref{lem:non-removable-in-H} or from
Lemma~\ref{lem:index-two-non-removable},
respectively, that an end of~$f$ is cubic and it lies in~$V(R)$.

\smallskip
Now consider the case in which $f$ is \Rcomp.
Since $f$ and $g$ are nonadjacent,
Lemma~\ref{lem:removable-not-thin} implies that $f$ is thin,
whence it is \Rthin. It remains to argue that the index of~$f$ is
two. Suppose to the contrary that an end of~$f$, say~$z_0$, is non-cubic.
By hypothesis, $f$ is not strictly \Rthin, whence its other end~$y_0$ is cubic.
Using the fact that $y_1$ is the only common neighbour of $y_0$~and~$y_2$,
it is easily verified that the retract of~$G-f$ has no multiple edges, that is,
$f$ is strictly \Rthin; this contradicts the hypothesis.
Thus, each end of~$f$ is cubic,
whence the index of~$f$ is two.
\end{proof}

\begin{lem}
\label{lem:index-two-Rthin-edge-common-neighbour}
Let $G$ be a simple \Rbrick\ which is free of strictly \Rthin\ edges
and $e:=y_1z_1$ an \Rthin\ edge whose index is two.
Let $y_0$~and~$y_2$ denote the neighbours of~$y_1$
which are distinct from~$z_1$,
and let $z_0$~and~$z_2$ denote the neighbours of~$z_1$
which are distinct from~$y_1$.
Suppose that $y_0$~and~$y_2$ have a common neighbour~$w$
which is distinct from $y_1$. Let $f:=y_0w$ and $g:=y_2w$.
Then $w$ is non-cubic and is distinct from each of $z_0$~and~$z_2$.
Furthermore, $f$~and~$g$ are both removable, $y_0$~and~$y_2$ are both cubic,
and the following statements hold:
\begin{enumerate}[(i)]
\item one of $f$~and~$g$ is \Rcomp; adjust notation so that $f$
is \Rcomp;
\item $f$ is not thin, and its cubic end~$y_0$ is adjacent with (exactly) one
of $z_0$~and~$z_2$; and,
\item $g$ is thin but it is not \Rcomp, and its cubic end~$y_2$
lies in~$V(R)$.
\end{enumerate}
\end{lem}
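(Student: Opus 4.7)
The plan is to apply the lemmas of Section~\ref{sec:multiple-edges-retracts} to the parallel edges of the retract~$J$ of~$G-e$. Since the index of~$e$ is two, both $y_1$ and $z_1$ are cubic, so $J$ is obtained by bicontracting both ends of~$e$. The hypothesis that $y_0,y_2$ share a common neighbour $w\neq y_1$ places us in the ``adjacent'' case of Proposition~\ref{prop:f-g-adjacent}, with the images of $f=y_0w$ and $g=y_2w$ appearing as parallel edges in~$J$; the proposition immediately yields $\deg_G(w)\ge 4$, so $w$ is non-cubic.

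I would first show that $f$ and $g$ are both removable in~$G$: if either were non-removable, Lemma~\ref{lem:index-two-non-removable}(ii) would force $f,g$ to be non-adjacent in~$G$, contradicting their sharing~$w$. To arrange the asymmetric notation of clauses (i)--(iii), I would then argue that it is impossible for both $f$ and $g$ to be thin. By Lemma~\ref{lem:removable-not-thin}(iii), whenever one of $f,g$ is removable but not thin, the other is thin, so it suffices to rule out the case in which both are thin. If a thin edge $h\in\{f,g\}$ were additionally $R$-compatible, it would be $R$-thin, necessarily of index one since $w$ is non-cubic while the other end of~$h$ is cubic; Lemma~\ref{lem:index-one-Rthin-edge} applied to~$h$ would then demand a second non-cubic common neighbour of two specified cubic vertices adjacent to the cubic end of~$h$, which upon tracing through the already-fixed neighbourhoods around~$e$ (and using that $w$ is the unique non-cubic common neighbour of $y_0,y_2$) contradicts the simplicity of~$G$. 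The remaining subcase, in which both $f,g$ are thin and neither is $R$-compatible, is excluded by Lemma~\ref{lem:non-removable-in-H}: both would then have a cubic end in~$V(R)$, placing both $y_0$ and $y_2$ in~$V(R)$, which together with the incidences of $\alpha,\beta$ at $y_1$ or $z_1$ overloads the bipartition of~$H$.

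Adjust notation so that $f$ is $R$-compatible and not thin. Applying Lemma~\ref{lem:removable-not-thin} to~$f$ delivers: $g$ is thin (clause (iii)); $w$ is not adjacent to either end of~$e$, hence $w\notin\{z_0,z_2\}$ (clause (ii)); and $N(y_0)\subseteq N(z_1)\cup\{w\}=\{y_1,z_0,z_2,w\}$ (clause (iv)). The \textbf{main obstacle} is to refine (iv) so as to show that $y_0$ is adjacent to \emph{exactly} one of $z_0,z_2$, thereby forcing $\deg_G(y_0)=3$. This is a parity computation in the bipartition $[A,B]$ of~$H$: each of the candidate edges $y_0z_0$ and $y_0z_2$, if monochromatic, must coincide with the unique $A\times A$ edge~$\alpha$ or the unique $B\times B$ edge~$\beta$ of~$R$; since $\alpha,\beta$ are already pinned by their incidences at $y_1$ or~$z_1$, the simultaneous presence of both candidate edges is impossible. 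A symmetric application (with $g$ in place of $f$) yields that $y_2$ is cubic.

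Finally, since $g$ is not $R$-compatible, Lemma~\ref{lem:non-removable-in-H} provides a cubic end of~$g$ lying in~$V(R)$; because $w$ is non-cubic, this cubic end is $y_2$, completing~(iii). The $R$-compatibility of~$f$ asserted in~(i) holds by the notation adjustment above, and the hypothesis on $w\ne z_0,z_2$ has already been verified. The key technical work lies in the parity argument of the third paragraph, where the specific structure of the removable doubleton~$R$ interacts with the bipartition of~$H$ to prevent $y_0$ from being adjacent to both $z_0$ and $z_2$.
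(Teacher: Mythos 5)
There is a genuine gap, and it sits exactly where you flag the ``main obstacle''. Your parity argument for clause \emph{(ii)} does not work. Since $e$ is $R$-compatible and both of its ends are cubic, neither $y_1y_0$, $y_1y_2$, $z_1z_0$ nor $z_1z_2$ can belong to $R$ (otherwise $y_1$ or $z_1$ would have degree one in $H-e$); hence $y_0,y_2\in B$ and $z_0,z_2\in A$, so the candidate edges $y_0z_0$ and $y_0z_2$ are ordinary bichromatic edges of $H$ and neither could ``coincide with $\alpha$ or $\beta$''. Nothing on parity grounds prevents $y_0$ from having degree four with $N(y_0)=\{y_1,w,z_0,z_2\}$, which is precisely the case Lemma~\ref{lem:removable-not-thin}\emph{(iv)} leaves open. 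The paper closes it by first proving that $z_0$ and $z_2$ are not both adjacent with $y_0$ (and the three companion statements), which is obtained by re-running the ``$w\ne z_0$'' contradiction argument with the roles of the two cubic ends of $e$ interchanged. That symmetric argument is also what forces $y_0$ and $y_2$ to be cubic in the first place (if $y_0$ were non-cubic, Lemma~\ref{lem:removable-not-thin}\emph{(iv)} would make it adjacent to both $z_0$ and $z_2$). This whole block is missing from your proposal, and your claimed ``symmetric application with $g$ in place of $f$'' to get $y_2$ cubic is invalid in any case, because Lemma~\ref{lem:removable-not-thin} applies only to a removable edge that is \emph{not} thin, and $g$ is thin.

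A second gap is the phrase ``adjust notation so that $f$ is $R$-compatible and not thin''. From ``not both thin'' plus Lemma~\ref{lem:removable-not-thin}\emph{(iii)} you only get that exactly one of $f,g$ is thin; you never show that the non-thin one is the $R$-compatible one, nor that the thin one fails to be $R$-compatible, and your final paragraph then assumes ``$g$ is not $R$-compatible'' without proof. The paper's route is: at least one of $f,g$ is $R$-compatible (via Lemma~\ref{lem:non-removable-in-H} and the nonadjacency of $y_0,y_2$, which you also never record), and any $R$-compatible one among $f,g$ cannot be thin, because a thin $R$-compatible $f$ would be $R$-thin but not strictly so, forcing a non-cubic common neighbour of $y_1$ and the third neighbour of $y_0$ --- impossible once all three neighbours of $y_1$ are known to be cubic. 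Note that this last step again depends on the cubicity of $y_0$ and $y_2$, i.e.\ on the missing block above, so the dependency cannot be rearranged away. Your opening steps ($w$ non-cubic via Proposition~\ref{prop:f-g-adjacent}, removability of $f,g$ via Lemma~\ref{lem:index-two-non-removable}, and deriving $w\notin\{z_0,z_2\}$ from Lemma~\ref{lem:removable-not-thin}\emph{(ii)}) are fine, and your two subcases for excluding ``both thin'' are recoverable with care, but clauses \emph{(i)}--\emph{(iii)} are not established as written.
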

\begin{proof}
Note that $f$~and~$g$ are multiple edges in the retract~$J$ of~$G-e$.
Since $f$ and $g$ are adjacent,
by Proposition \ref{prop:f-g-adjacent}, their common end~$w$ is non-cubic.
Consequently, by Lemma~\ref{lem:index-two-non-removable},
$f$~and~$g$ are both removable. Note that $y_0$~and~$y_2$ are
nonadjacent, since otherwise $e$ is non-removable. In particular,
at least one of $y_0$~and~$y_2$ does not lie in~$V(R)$.
By Lemma~\ref{lem:non-removable-in-H}, at least one of $f$~and~$g$
is \Rcomp.

\smallskip
We now argue that $w$ is distinct from each of $z_0$~and~$z_2$.
Suppose not, and assume without loss of generality that $w=z_0$.
By Lemma~\ref{lem:removable-not-thin}{\it (ii)}, $f$~and~$g$ are both thin;
in particular, at least one of them is \Rthin.
Adjust notation so that $f$ is \Rthin.
By hypothesis, $f$ is not \sRthin, whence
the retract of~$G-f$ has multiple edges; consequently, the
end~$y_0$ of~$f$ is cubic. Let $v$ denote the
neighbour of~$y_0$ which is distinct from $y_1$~and~$z_0$.
Furthermore, as $f$ is not \sRthin, we infer that $v$~and~$y_1$
have a common neighbour which is distinct from~$y_0$;
by Proposition~\ref{prop:f-g-adjacent}, such a common neighbour
is non-cubic. Since $z_1$ is cubic, we
infer that $y_2$ is non-cubic.
By Lemma~\ref{lem:non-removable-in-H}, $g$ is \Rcomp.
As noted earlier, $g$ is thin; whence $g$ is \Rthin.
Since each end of~$g$ is non-cubic,
$g$ is \sRthin, contrary to the hypothesis.
Thus $w$ is distinct from each of $z_0$~and~$z_2$;
see Figure~\ref{fig:index-two-Rthin-edge-common-neighbour}.

\begin{figure}[!ht]
%\medskip
\centering
\begin{tikzpicture}
%\draw (-1.5,0) -- (-1.5,2);
\draw (0,-2) -- (-0.35,-2.35);
\draw (0,-2) -- (0.35,-2.35);

\draw (-1.5,2) -- (1.5,2);
\draw (-1.5,2) node{}node[nodelabel,above]{$z_0$};
\draw (1.5,2) node{}node[nodelabel,above]{$z_2$};

\draw (-1,-1.2)node[nodelabel]{$f$};
\draw (1,-1.2)node[nodelabel]{$g$};

\draw (-1.5,0) -- (0,-2);
\draw (1.5,0) -- (0,-2);
\draw (0,-2)node{};
\draw (0,-2.35)node[nodelabel]{$w$};

\draw (0,0) -- (0,2);
\draw (0,0) -- (-1.5,0);
\draw (0,0) -- (1.5,0);

\draw (0,0) node{};
\draw (0,-0.35)node[nodelabel]{$y_1$};
\draw (0,2) node[fill=black]{}node[nodelabel,above]{$z_1$};
\draw (-1.5,0) node[fill=black]{}node[nodelabel,left]{$y_0$};
\draw (1.5,0) node[fill=black]{}node[nodelabel,right]{$y_2$};
\draw (0.25,1)node[nodelabel]{$e$};

%\draw (0,-3.3)node[nodelabel]{(a)};

\end{tikzpicture}
\caption{Illustration for
Lemma~\ref{lem:index-two-Rthin-edge-common-neighbour}}
\label{fig:index-two-Rthin-edge-common-neighbour}
\bigskip
\end{figure}

Let us review what we have proved so far. We have shown that $y_0$~and~$y_2$
are not both adjacent with~$z_0$. An analogous argument shows that
$y_0$~and~$y_2$ are not both adjacent with~$z_2$.
By symmetry, $z_0$~and~$z_2$ are not both adjacent with~$y_0$;
likewise, $z_0$~and~$z_2$ are not both adjacent with~$y_2$.
In summary, there are at most two edges between
$\{y_0,y_2\}$~and~$\{z_0,z_2\}$; and if there are precisely two such edges
then they are nonadjacent.

\smallskip
Now we argue that $y_0$ and $y_2$ are both cubic.
Suppose instead that $y_0$ is non-cubic; then,
by Lemma~\ref{lem:non-removable-in-H}, $f$ is \Rcomp.
Note that since each end of~$f$ is non-cubic, if $f$ is thin
then it is \sRthin, contrary to the hypothesis. So it must be
the case that $f$ is not thin. By Lemma~\ref{lem:removable-not-thin}{\it (iv)},
$N(y_0) \subseteq N(z_1) \cup \{w\} = \{z_0,z_2,y_1,w\}$. As $y_0$
is non-cubic, it must be adjacent with each of $z_0$~and~$z_2$;
however, this contradicts what we have already established in the preceding
paragraph. We conclude that $y_0$~and~$y_2$ are both cubic.

\smallskip
As noted earlier, at least one of $f$~and~$g$ is \Rcomp.
As in statement {\it (i)} of the lemma,
adjust notation so that $f$ is \Rcomp.
We will now argue that $f$ is not thin.

\smallskip
Suppose instead that $f$ is thin.
Let $v$ denote the neighbour of~$y_0$ which is distinct from
$y_1$~and~$w$.
By hypothesis, $f$ is not \sRthin, whence $v$~and~$y_1$ have a common
neighbour which is distinct from~$y_0$;
by Proposition~\ref{prop:f-g-adjacent}, such a common neighbour
is non-cubic. However, this is not possible as each neighbour of~$y_1$
is cubic. Thus, $f$ is not thin.
An analogous argument shows that if $g$ is \Rcomp\ then $g$ is not thin.

\smallskip
Since $f$ is removable but it is not thin,
by Lemma~\ref{lem:removable-not-thin}{\it (iv)},
$N(y_0) \subseteq N(z_1) \cup \{w\} = \{z_0,z_2,y_1,w\}$.
It follows from our previous observation that $y_0$ is adjacent with
exactly one of $z_0$~and~$z_2$; adjust notation so that $y_0$ is adjacent
with~$z_0$. This proves statement {\it (ii)}.

\smallskip
Also, by Lemma~\ref{lem:removable-not-thin}, one of $f$~and~$g$ is thin;
as per our notation, $g$ is thin. Consequently, $g$ is not \Rcomp.
By Lemma~\ref{lem:non-removable-in-H}, the cubic end~$y_2$
of~$g$ lies in~$V(R)$. This proves statement {\it (iii)}, and we are done.
\end{proof}

\begin{proofOf}{the $R$-ladder Theorem~(\ref{thm:Rladder-configuration})}
As in the statement of the theorem, let $G$ be a simple \Rbrick\ which
is free of strictly \Rthin\ edges, and let $e$ denote an \Rthin\ edge
whose index is two.
We shall let $y_1$~and~$z_1$ denote the
ends of~$e$, where $y_1 \in A$ and $z_1 \in B$. Furthermore, we
let $y_0$~and~$y_2$ denote the neighbours of~$y_1$ which are distinct
from~$z_1$, and likewise, we let $z_0$~and~$z_2$ denote the neighbours
of~$z_1$ which are distinct from~$y_1$.

\smallskip
Our goal is to show that $G$ has an \Rladder\ which contains the edge~$e$.
As mentioned earlier, we will consider two separate cases which correspond
to the situations in
Lemmas~\ref{lem:index-two-Rthin-edge-no-common-neighbour}
and \ref{lem:index-two-Rthin-edge-common-neighbour}, respectively.

\bigskip
\noindent
\underline{Case 1}: $y_1$ is the only common neighbour of $y_0$~and~$y_2$, and likewise, $z_1$
is the only common neighbour of $z_0$~and~$z_2$.

\medskip
\noindent
By Lemma~\ref{lem:index-two-Rthin-edge-no-common-neighbour}, there are precisely two
nonadjacent edges between $\{y_0,y_2\}$ and $\{z_0,z_2\}$. Adjust notation so that
$y_0z_0$ and $y_2z_2$ are edges of~$G$, as shown in
Figure~\ref{fig:no-common-neighbour}. Observe that the graph in the figure is a ladder of which
$e$ is an internal rung; furthermore, it is a subgraph of~$H$.

\begin{figure}[!ht]
\medskip
\centering
\begin{tikzpicture}

\draw (-1.5,0) -- (-1.5,2);
\draw (1.5,0) -- (1.5,2);

\draw (-1.5,2)node{}node[nodelabel,above]{$z_0$} --
(1.5,2)node{}node[nodelabel,above]{$z_2$};

%\draw (-1.75,1)node[nodelabel]{$f$};
%\draw (1.75,1)node[nodelabel]{$g$};

\draw (0,0) -- (0,2);
\draw (0,0) -- (-1.5,0);
\draw (0,0) -- (1.5,0);

\draw (0,0) node{}node[nodelabel,below]{$y_1$};
\draw (0,2) node[fill=black]{}node[nodelabel,above]{$z_1$};
\draw (-1.5,0) node[fill=black]{}node[nodelabel,below]{$y_0$};
\draw (1.5,0) node[fill=black]{}node[nodelabel,below]{$y_2$};
\draw (0.25,1)node[nodelabel]{$e$};
\end{tikzpicture}
\vspace*{-0.1in}
\caption{The situation in Case 1}
\label{fig:no-common-neighbour}
\bigskip
\end{figure}
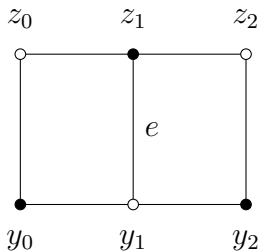

We let $K$ be a subgraph of~$H$ of maximum order that has the following properties:

\begin{enumerate}[(i)]
\item $K$ is a ladder and $e$ is an internal rung of~$K$, and
\item every internal rung of~$K$ is an \Rthin\ edge whose index is two.
\end{enumerate}

Note that the subgraph~$K$ is either an odd ladder or an even ladder;
see Figure~\ref{fig:illustration-for-Rladder-theorem-Case1}.
We shall denote by $au$ and $bw$ the external rungs of~$K$ such that
$a, w \in A$ and $b, u \in B$,
as shown in the figure.
It follows from property (ii) of~$K$ that each of its vertices,
except possibly $a, u, b$~and~$w$,
is cubic in~$G$.

\begin{rem}
\label{rem:ladder-of-order-six}
Note that, if~$|V(K)|=6$ then $K$ is the same as the subgraph of~$H$
shown in Figure~\ref{fig:no-common-neighbour};
in particular, $\{u,b\}=\{y_0,y_2\}$, and likewise,
$\{w,a\}=\{z_0,z_2\}$; consequently, by our hypothesis, $y_1$
is the only common neighbour of $u$~and~$b$, and likewise,
$z_1$ is the only common neighbour of $w$~and~$a$.
\end{rem}

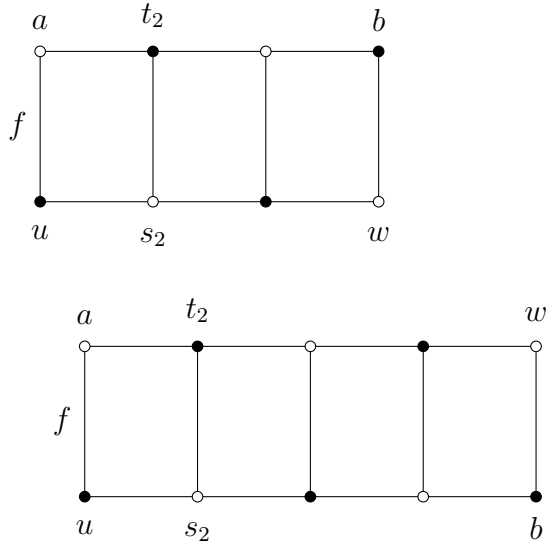
\begin{figure}[!ht]
\centering
%\medskip
\begin{tikzpicture}
\draw (0,0) -- (4.5,0) -- (4.5,2) -- (0,2) -- (0,0);

\draw (1.5,0)node{} -- (1.5,2)node[fill=black]{};
\draw (3,0)node[fill=black]{} -- (3,2)node{};

\draw (0,0)node[fill=black]{}node[below,nodelabel]{$u$};
\draw (0,2)node{}node[above,nodelabel]{$a$};
\draw (4.5,0)node{}node[below,nodelabel]{$w$};
\draw (4.5,2)node[fill=black]{}node[above,nodelabel]{$b$};

\draw (1.5,0)node[below,nodelabel]{$s_2$};
\draw (1.5,2)node[above,nodelabel]{$t_2$};

\draw (-0.3,1)node[nodelabel]{$f$};

\end{tikzpicture}
\hspace*{1in}
\begin{tikzpicture}
\draw (0,0) -- (6,0) -- (6,2) -- (0,2) -- (0,0);

\draw (1.5,0)node{} -- (1.5,2)node[fill=black]{};
\draw (3,0)node[fill=black]{} -- (3,2)node{};
\draw (4.5,0)node{} -- (4.5,2)node[fill=black]{};

\draw (0,0)node[fill=black]{}node[below,nodelabel]{$u$};
\draw (0,2)node{}node[above,nodelabel]{$a$};
\draw (6,2)node{}node[above,nodelabel]{$w$};
\draw (6,0)node[fill=black]{}node[below,nodelabel]{$b$};

\draw (1.5,0)node[below,nodelabel]{$s_2$};
\draw (1.5,2)node[above,nodelabel]{$t_2$};

\draw (-0.3,1)node[nodelabel]{$f$};

\end{tikzpicture}
\caption{Illustration for Case 1 of the $R$-ladder Theorem}
\label{fig:illustration-for-Rladder-theorem-Case1}
\bigskip
\end{figure}

Our goal is to show that $K$ is an \Rladder. To this end, we need to
establish that $a$~and~$b$ (or likewise, $u$ and $w$)
are both cubic in~$G$ and they lie in~$V(R)$.

\smallskip
Now, let $f$ denote the edge~$au$.
We will argue that $f$ is not \Rcomp,
and then use this fact to deduce that one of the ends of~$f$ is cubic and it lies
in~$V(R)$.
As shown in Figure~\ref{fig:illustration-for-Rladder-theorem-Case1},
let $s_2$ denote the neighbour of~$u$ in~$K$ which is distinct
from~$a$, and likewise, let $t_2$ denote the neighbour of~$a$
in~$K$ which is distinct from~$u$.

\smallskip
Suppose instead that $f$ is \Rcomp.
By Lemma~\ref{lem:index-two-Rthin-edge-no-common-neighbour}{\it (ii)},
with $s_2t_2$ playing the role of~$e$,
we conclude that $f$ is \Rthin\ and its index is two.
We shall let $s_0$ denote the neighbour of~$u$
which is distinct from $s_2$~and~$a$,
and likewise,
let~$t_0$ denote the neighbour of~$a$
which is distinct from $t_2$~and~$u$.
Note that $s_0 \in A$ and $t_0 \in B$.
It is easily seen that if $s_0$ is the same as~$w$
then $V(K) \cap A$ is a (nontrivial) barrier of~$G$;
this is absurd as~$G$ is a brick.
Thus $s_0 \neq w$, and likewise, $t_0 \neq b$.
It follows that $s_0, t_0 \notin V(K)$.

\smallskip
We will use the fact that $f$ is not strictly \Rthin\ to deduce that $s_0$~and~$t_0$
are adjacent; this will help us contradict the maximality of~$K$.
First suppose that $s_0$~and~$s_2$ have a common neighbour~$x$ which is
distinct from~$u$. By Proposition~\ref{prop:f-g-adjacent}, $x$
is non-cubic. Observe that, if $|V(K)| \geq 8$ then every neighbour of~$s_2$
is cubic; and if $|V(K)| = 6$ then $b$ is the only neighbour of~$s_2$ which
is possibly non-cubic.
We conclude that $|V(K)| = 6$ and that $x = b$. Now, $s_0$
is a common neighbour of $u$ and $b$; this contradicts the hypothesis
(see Remark~\ref{rem:ladder-of-order-six}).
We conclude that $u$ is the only common neighbour of $s_0$~and~$s_2$.
An analogous argument shows that $a$ is the only common neighbour of
$t_0$~and~$t_2$. It follows that $s_0$ and $t_0$ are adjacent, as $f$
is not strictly thin. Now, let $K'$ denote the subgraph of~$H$
obtained from~$K$ by adding the vertices $s_0$~and~$t_0$, and the
edges $us_0, s_0t_0$ and $t_0a$; then $K'$ contradicts the maximality of~$K$.

\smallskip
We thus conclude that~$f=au$ is not \Rcomp. Consequently,
by Lemma~\ref{lem:index-two-Rthin-edge-no-common-neighbour}{\it (i)},
with $s_2t_2$ playing the role of~$e$,
at least one of $a$ and $u$ is cubic and it also
lies in~$V(R)$. Adjust notation so that $a$ is cubic and it lies in~$V(R)$.

\smallskip
An analogous argument shows that at least
one of $b$ and $w$ is cubic and it lies in~$V(R)$;
we claim that $b$ must satisfy both of these properties. Suppose not; then
$w$ is cubic and it lies in~$V(R)$; this means that the edge~$\alpha$ of~$R$
joins the vertices $a$~and~$w$. Observe that $\{b,u\}$ is a $2$-vertex cut of~$G$;
this is absurd as $G$ is a brick.

\smallskip
We have shown that $a$ and $b$ both are cubic and they
lie in~$V(R)$. Thus $K$ is an \Rladder. Observe that, by property (i) of~$K$,
the edge~$e$ is an internal rung of~$K$. In particular, $e$ is an edge of~$K$,
as desired.

\bigskip
\noindent
\underline{Case 2}: $y_0$~and~$y_2$ have a common neighbour which
is distinct from~$y_1$, or likewise, $z_0$~and~$z_2$ have a common neighbour
which is distinct from~$z_1$.

\medskip
\noindent
As shown in Figure~\ref{fig:common-neighbour},
assume without loss of generality that $y_0$~and~$y_2$
have a common neighbour, say~$w$, which is distinct from~$y_1$.
We let $f:=y_0w$ and $g:=y_2w$.
We invoke Lemma~\ref{lem:index-two-Rthin-edge-common-neighbour} to
infer the following: $w$ is non-cubic and
it is distinct from each of $z_0$~and~$z_2$;
whereas $y_0$ and $y_2$ are both cubic; $f$~and~$g$ are both removable edges.
Furthermore, adjusting notation as in the lemma, $f$ is \Rcomp\ but it is not
thin and its cubic end~$y_0$ is adjacent with one of $z_0$ and $z_2$.
Assume without loss of generality that $y_0$ is adjacent with~$z_0$.
The edge~$g$ is thin but it is not \Rcomp\ and its cubic end~$y_2$ lies
in~$V(R)$. As per our notation, $y_2$ is an end of $\beta$;
we shall let $x$ denote the other end of~$\beta$.

\begin{figure}[!ht]
\bigskip
\centering
\begin{tikzpicture}[scale=1]

\draw (-1,-1.2)node[nodelabel]{$f$};
\draw (1,-1.2)node[nodelabel]{$g$};

\draw (-1.5,0) -- (0,-2);
\draw (1.5,0) -- (0,-2);

\draw (0,-2) -- (-0.35,-2.35);
\draw (0,-2) -- (0.35,-2.35);

\draw (1.5,0) -- (3,0);
\draw (0,0) -- (0,2);
\draw (0,0) -- (-1.5,0);
\draw (0,0) -- (1.5,0);
\draw (-1.5,2) -- (1.5,2);
\draw (-1.5,0) -- (-1.5,2);

\draw (0,-2)node{};
\draw (0,-2.35)node[nodelabel]{$w$};
\draw (3,0)node[fill=black]{};
\draw (3,0.35)node[nodelabel]{$x$};
\draw (2.25,-0.3)node[nodelabel]{$\beta$};
\draw (0,0) node{};
\draw (0,-0.35)node[nodelabel]{$y_1$};
\draw (0,2) node[fill=black]{};
\draw (0,2.35) node[nodelabel]{$z_1$};
\draw (-1.5,0) node[fill=black]{}node[nodelabel,left]{$y_0$};
\draw (1.5,0) node[fill=black]{};
\draw (1.5,0.35) node[nodelabel]{$y_2$};
\draw (-1.5,2) node{}node[nodelabel,left]{$z_0$};
\draw (1.5,2) node{};
\draw (1.5,1.65) node[nodelabel]{$z_2$};
\draw (0.25,1)node[nodelabel]{$e$};
\end{tikzpicture}
%\vspace*{-0.05in}
\caption{The situation in Case 2}
\label{fig:common-neighbour}
\bigskip
\end{figure}
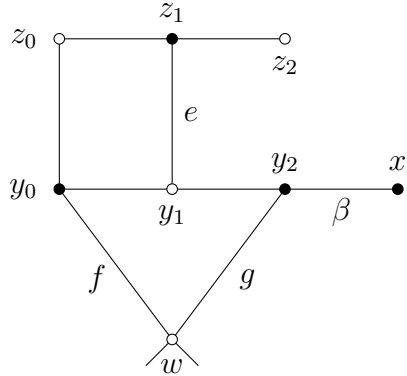

We will consider two subcases.
In the first one, we assume that $z_0$ is cubic and it lies in~$V(R)$;
and in the second case, we assume that either $z_0$ is non-cubic
or it is not in~$V(R)$.

\bigskip
\noindent
\underline{Case 2.1}: $z_0$ is cubic and it lies in~$V(R)$.

\medskip
\noindent
In this case, we shall denote by $K$ the subgraph whose vertex
set is~$\{z_0,z_1,y_0,y_1,w,y_2\}$ and edge set is~$\{e,y_1y_2,g,f,y_0z_0,z_0z_1,y_0y_1\}$.
Observe that $K$ is a ladder of order six and it is a subgraph of~$H$;
furthermore, two of its corners, namely $y_2$ and $z_0$, are cubic and they both lie in~$V(R)$.
To complete the proof in this case, we will show that $K$ is an \Rladder; for this,
we only need to prove that the internal rung~$y_0y_1$ is \Rthin\ and its index
is two.

\smallskip
We begin by showing that $y_0y_1$ is \Rcomp, that is, $y_0y_1$ is removable in~$H$ as well as in~$G$.
Here, we will not require the hypothesis that $z_0$ is cubic and it lies in~$V(R)$.

\begin{Claim}
\label{claim:y0y1-Rcompatible}
The edge~$y_0y_1$ is \Rcomp.
\end{Claim}
\begin{proof}
Note that $y_0y_1$ is removable in the subgraph~$K$.
We will argue that $K$ is a conformal subgraph of~$H$, and then use
Proposition~\ref{prop:conformal-exchange-property-removable-bipmcg}
to deduce that $y_0y_1$ is removable in~$H$.

\smallskip
Let $M$ be any perfect matching of~$H$ which contains the edge~$z_0z_1$.
Since $M$ does not contain $\alpha$ or $\beta$, it is easily verified
that~$M \cap E(K)$
is a perfect matching of~$K$, whence $K$ is a conformal subgraph of~$H$;
consequently, $y_0y_1$ is removable in~$H$.

\smallskip
To conclude that $y_0y_1$ is removable in~$G$, we will show that
$G-y_0y_1$ has a perfect matching~$M$ which contains both $\alpha$ and $\beta$.
Let $N$ be a perfect matching of~$G-\{z_1,x\}$; such a perfect matching exists
as $G$ is a brick; note that $\alpha \in N$ and $\beta \notin N$.
Clearly, either $y_1y_2 \in N$ or $g \in N$.
If $y_1y_2 \in N$, we let $M:= (N-y_1y_2) + e + \beta$.
On the other hand, if $g \in N$ then $y_0y_1 \in N$, and we let
$M:=(N-g-y_0y_1) + e + f + \beta$. In either case,
$M$ is the desired perfect matching,
and this completes the proof.
\end{proof}

We now proceed to show that $y_0y_1$ is an \Rthin\ edge.
To this end, we will use the characterization of \Rthin\
edges in terms of barriers given by
\cite[Proposition~2.9]{koth19}.
%In contrast to the previous claim,
%here, we require the hypothesis that $z_0$ is cubic and it lies in~$V(R)$.

\begin{Claim}
\label{claim:y0y1-Rthin}
The edge~$y_0y_1$ is \Rthin, and its index is two.
\end{Claim}
\begin{proof}
Observe that, since $y_0$ and $y_1$ are both cubic, $G-y_0y_1$ has
two maximal nontrivial barriers; one of them, say~$S_A$,
is a subset of~$A$ and it contains $z_0$~and~$w$;
the other one, say~$S_B$, is a subset of~$B$ and it contains $z_1$~and~$y_2$.
In particular, the index of~$y_0y_1$ is two.

\smallskip
We will argue that $S_A = \{z_0,w\}$;
our argument does not use the fact that $w$ is non-cubic, and it may
be mimicked to show that $S_B = \{z_1,y_2\}$; thereafter, we apply
\cite[Proposition~2.9]{koth19}
to infer that $y_0y_1$ is \Rthin.

\smallskip
Note that $w$ is in the barrier~$S_A$. Now, let $v$ be any vertex in~$A-\{z_0,w\}$.
We will show
that $(G-y_0y_1) - \{w,v\}$ has a perfect matching~$M$; this would imply that $v$ is not in the barrier~$S_A$.
Let $N$ be a perfect matching of~$G-\{w,v\}$; note that $\beta \in N$ and
$\alpha \notin N$.
If $y_0y_1 \notin N$ then let $M:=N$, and we are done.
Now suppose that $y_0y_1 \in N$.
By our hypothesis, $z_0$ is cubic and it lies in~$V(R)$;
this means that the three edges incident at $z_0$ are $z_0y_0, z_0z_1$ and $\alpha$.
Since, $y_0y_1 \in N$ and $\alpha \notin N$ and $v \neq z_0$,
we conclude that $z_0z_1 \in N$.
Now, $M:= (N-y_0y_1-z_0z_1) + y_0z_0 + e$ is the desired perfect matching.
We conclude that $S_A = \{z_0,w\}$. As discussed in the preceding paragraph, this completes the proof.
\end{proof}

We have shown that the only internal rung of~$K$, namely $y_0y_1$, is
an \Rthin\ edge whose index is two. As discussed earlier,
$K$ is indeed an \Rladder, and since it contains~$e$,
this completes the proof in this case (2.1).

\bigskip
\noindent
\underline{Case 2.2}: Either $z_0$ is non-cubic or it does not lie in~$V(R)$, possibly both.

\medskip
\noindent
As per our notation, $z_0 \in A$; it follows from the hypothesis of this case
that $z_0$ has at least one neighbour which lies in~$B-\{z_1,y_0\}$;
we shall let $u$ denote such a neighbour of~$z_0$, as shown in
Figure~\ref{fig:ladder-of-order-eight}. Observe that $u$ is distinct
from~$y_2$; however, it is possible that $u=x$.

\begin{figure}[!ht]
\centering
\begin{tikzpicture}[scale=0.9]

\draw (-1.5,2) -- (0,4);
\draw (0,4)node[fill=black]{};
\draw (0,4.35)node[nodelabel]{$u$};

\draw (-1,-1.2)node[nodelabel]{$f$};
\draw (1,-1.2)node[nodelabel]{$g$};

\draw (-1.5,0) -- (0,-2);
\draw (1.5,0) -- (0,-2);

\draw (0,-2) -- (-0.35,-2.35);
\draw (0,-2) -- (0.35,-2.35);

\draw (1.5,0) -- (3,0);
\draw (0,0) -- (0,2);
\draw (0,0) -- (-1.5,0);
\draw (0,0) -- (1.5,0);
\draw (-1.5,2) -- (1.5,2);
\draw (-1.5,0) -- (-1.5,2);

\draw (0,-2)node{};
\draw (0,-2.35)node[nodelabel]{$w$};
\draw (3,0)node[fill=black]{};
\draw (3,0.35)node[nodelabel]{$x$};
\draw (2.25,-0.3)node[nodelabel]{$\beta$};
\draw (0,0) node{};
\draw (0,-0.35)node[nodelabel]{$y_1$};
\draw (0,2) node[fill=black]{};
\draw (0,2.35) node[nodelabel]{$z_1$};
\draw (-1.5,0) node[fill=black]{}node[nodelabel,left]{$y_0$};
\draw (1.5,0) node[fill=black]{};
\draw (1.5,0.35) node[nodelabel]{$y_2$};
\draw (-1.5,2) node{}node[nodelabel,left]{$z_0$};
\draw (1.5,2) node{};
\draw (1.5,1.65) node[nodelabel]{$z_2$};
\draw (0.25,1)node[nodelabel]{$e$};
\end{tikzpicture}
\vspace*{-0.05in}
\caption{The situation in Case 2.2 (all labelled vertices are pairwise
distinct, except possibly $u$ and $x$)}
\label{fig:ladder-of-order-eight}
\bigskip
\end{figure}
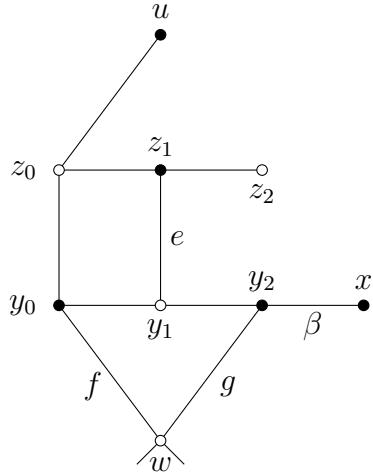

In this case, we will prove that $z_0z_1$ is an \Rthin\ edge whose index is two;
in particular, $z_0$ is cubic and $z_0 \notin V(R)$.
(If not, we will find a \sRthin\ edge contrary to the
hypothesis.) Thereafter, we argue that $u$
is adjacent with~$z_2$; this establishes a certain symmetry between
$y_0,y_1,y_2,w$ and $z_0,z_1,z_2,u$, respectively;
see Figure~\ref{fig:establish-symmetry}.
We shall exploit this to deduce that $y_0y_1$
is an \Rthin\ edge (whose index is two),
and that $z_2$ is cubic and it lies in~$V(R)$.
In the end, we will find
an \Rladder\ of order eight whose internal rungs are
$y_0y_1$~and~$z_0z_1$.

\smallskip
Our first step is to show that $z_0z_1$ is \Rcomp, that is, $z_0z_1$
is removable in~$H$ as well as in~$G$.

\begin{Claim}
\label{claim:z0z1-Rcompatible}
The edge~$z_0z_1$ is \Rcomp.
\end{Claim}
\begin{proof}
Note that $y_0y_1z_1z_0y_0$ is a $4$-cycle containing the edges
$y_0y_1$~and~$z_0z_1$. We will show that $y_0y_1$ is admissible
in~$H-z_0z_1$, and then invoke
Corollary~\ref{cor:quadrilateral-admissible-removable} to deduce that $z_0z_1$
is removable in~$H$.

\smallskip
We need to show that $H-z_0z_1$ has a perfect matching~$M$ which
contains~$y_0y_1$.
Let $N$ be any perfect matching of~$H-\{u,y_1\}$; such a perfect
matching exists by \cite[Proposition~2.1]{koth19}.
Observe that $g \in N$; consequently, $y_0z_0 \in N$.
Now, $M:=(N-y_0z_0) + uz_0 + y_0y_1$ is the desired perfect matching.
As discussed above, $z_0z_1$ is removable in~$H$.

\smallskip
To conclude that $z_0z_1$ is removable in~$G$, we will show
that $G-z_0z_1$ has a perfect matching~$M$ which contains
both $\alpha$~and~$\beta$. Let $N$ be any perfect matching
of~$G$ which contains $\alpha$ and $\beta$. If $z_0z_1 \notin N$
then let $M:=N$, and we are done.
Now suppose that $z_0z_1 \in N$.
Observe that $y_0y_1 \in N$; furthermore,
$M:= (N-y_0y_1-z_0z_1) + e + y_0z_0$ is the desired perfect
matching. This completes the proof.
\end{proof}

We proceed to prove that $z_0z_1$ is an \Rthin\ edge whose index is two.
As we did in Claim~\ref{claim:y0y1-Rthin},
we will use the characterization
of \Rthin\ edges given by
\cite[Proposition~2.9]{koth19}.
However, here we need more general arguments since we do not know
the degree of~$z_0$.

\begin{Claim}
\label{claim:z0z1-Rthin}
The edge~$z_0z_1$ is \Rthin, and its index is two.
\end{Claim}
\begin{proof}
Observe that, since $z_1$ is cubic, $G-z_0z_1$ has a maximal nontrivial
barrier, say~$S_A$, which is a subset of~$A$ and contains $y_1$~and~$z_2$.
We will first prove that $S_A=\{y_1,z_2\}$.

\smallskip
Let $v$ be any vertex in~$A-\{y_1,z_2\}$.
We will show that $(G-z_0z_1) - \{z_2,v\}$ has a perfect matching~$M$;
this would imply that $v$ is not in the barrier~$S_A$.
Let $N$ be a perfect matching of~$G-\{z_2,v\}$; note that $\beta \in N$
and $\alpha \notin N$. If $z_0z_1 \notin N$ then let $M:=N$,
and we are done.
Now suppose that $z_0z_1 \in N$,
and observe that $y_0y_1 \in N$;
consequently, $M:=(N-z_0z_1-y_0y_1)+e+y_0z_0$ is the desired
perfect matching. Thus, $S_A=\{y_1,z_2\}$.

\smallskip
Since $z_0z_1$ is \Rcomp, by \cite[Lemma~2.8]{koth19},
either~$S_A$ is the only maximal
nontrivial barrier of~$G-z_0z_1$,
or $G-z_0z_1$ has another maximal nontrivial barrier, say~$S_B$,
which is a subset of~$B$. We now argue that, in the former case,
$z_0z_1$ is \sRthin, contrary to the hypothesis.

\smallskip
Suppose that $S_A$ is the only maximal nontrivial barrier of~$G-z_0z_1$;
in this case, the index of~$z_0z_1$ is one.
By \cite[Proposition~2.9]{koth19}, $z_0z_1$ is \Rthin.
Also, $z_0$ is non-cubic, since otherwise its two neighbours distinct
from~$z_1$ would lie in a barrier.
Observe that,
since $z_1$ is the only common neighbour of $y_1$~and~$z_2$,
the retract of~$G-z_0z_1$ is simple, and thus $z_0z_1$ is \sRthin;
this is a contradiction.

\smallskip
It follows that $G-z_0z_1$ has a maximal nontrivial barrier,
say~$S_B$, which is a subset of~$B$;
in particular, the index of~$z_0z_1$ is two.
By \cite[Lemma~2.8]{koth19},
$z_0$ is isolated in~$(G-z_0z_1)-S_B$;
that is, in~$G-z_0z_1$, every neighbour
of~$z_0$ lies in the barrier~$S_B$. In particular,
$u, y_0 \in S_B$. We will prove that $S_B=\{u,y_0\}$.

\smallskip
Let $v$ be any vertex in~$B-\{u,y_0\}$. We will show that
$(G-z_0z_1) - \{u, v\}$ has a perfect matching~$M$; this
would imply that $v$ is not in the barrier~$S_B$.
Let $N$ be a perfect matching of~$G-\{u,v\}$;
note that $\alpha \in N$ and $\beta \notin N$.
If $z_0z_1 \notin N$ then let $M:=N$, and we are done.
Now suppose that $z_0z_1 \in N$.
If $y_0y_1 \in N$ then
$M:= (N-z_0z_1-y_0y_1) + e + y_0z_0$ is the desired
perfect matching.
Now suppose that $y_0y_1 \notin N$; then $f, y_1y_2 \in N$,
and $M:=(N-z_0z_1-f-y_1y_2) + y_0z_0 + g + e$ is the desired
perfect matching. Thus, as discussed above, $v \notin S_B$;
consequently, $S_B = \{u,y_0\}$.
In particular, $z_0$ is cubic.
Furthermore, by \cite[Proposition~2.9]{koth19},
$z_0z_1$ is \Rthin.
\end{proof}

We have shown that $z_0z_1$ is an \Rthin\ edge and its index is two;
in particular, both its ends are cubic. The three neighbours of $z_0$
are $y_0, z_1$ and $u$; see Figure~\ref{fig:ladder-of-order-eight}.

\smallskip
By hypothesis, $z_0z_1$ is not \sRthin; whence the retract of~$G-z_0z_1$
has multiple edges. Observe that $z_1$ is the only common neighbour
of $y_1$~and~$z_2$. Consequently, at least one of the following must hold:
either $u$~and~$y_0$ have a common neigbour which is distinct from~$z_0$,
or $u$ and $z_2$ are adjacent. We shall rule out the former case by arriving
at a contradiction.

\begin{figure}[!ht]
\smallskip
\centering
\begin{tikzpicture}[scale=0.9]

\draw (-1,-1.2)node[nodelabel]{$f$};

\draw (-1.5,0) -- (0,-2);
\draw (1.5,0) -- (0,-2);

\draw (0,-2) -- (-0.35,-2.35);
\draw (0,-2) -- (0.35,-2.35);

%\draw (1.5,0) -- (3,0);
\draw (0,0) -- (0,2);
\draw (0,0) -- (-1.5,0);
\draw (0,0) -- (1.5,0);
\draw (-1.5,2) -- (1.5,2);
\draw (-1.5,0) -- (-1.5,2);

\draw (0,-2)node{};
\draw (0,-2.35)node[nodelabel]{$w$};
%\draw (3,0)node[fill=black]{};
%\draw (3,0.35)node[nodelabel]{$x$};
%\draw (2.25,-0.3)node[nodelabel]{$\beta$};
\draw (0,0) node{};
\draw (0,-0.35)node[nodelabel]{$z_0$};
\draw (0,2) node[fill=black]{};
\draw (0,2.35) node[nodelabel]{$z_1$};
\draw (-1.5,0) node[fill=black]{}node[nodelabel,left]{$y_0$};
\draw (1.5,0) node[fill=black]{};
%\draw (1.5,0.35) node[nodelabel]{$u \in V(R)$};
\draw (2.8,0) node[nodelabel]{$u \in V(R)$};
\draw (-1.5,2) node{}node[nodelabel,left]{$y_1$};
\draw (1.5,2) node{};
\draw (1.5,1.65) node[nodelabel]{$z_2$};
\draw (0,-3.5)node[nodelabel]{(a)};
\end{tikzpicture}
\hspace*{0.5in}
\begin{tikzpicture}[scale=0.9]
\draw (0,4) to [out=180,in=90] (-3,1) to [out=270,in=180] (0,-2);
\draw (1.5,0) to [out=0,in=270] (2.5,2) to [out=90,in=0] (0,4);
\draw (-1.5,2) -- (0,4);
\draw (0,4)node[fill=black]{};
\draw (0,4.35)node[nodelabel]{$u=x$};

\draw (-1,-1.2)node[nodelabel]{$f$};
\draw (1,-1.2)node[nodelabel]{$g$};

\draw (-1.5,0) -- (0,-2);
\draw (1.5,0) -- (0,-2);

\draw (0,-2) -- (-0.35,-2.35);
\draw (0,-2) -- (0.35,-2.35);

%\draw (1.5,0) -- (3,0);
\draw (0,0) -- (0,2);
\draw (0,0) -- (-1.5,0);
\draw (0,0) -- (1.5,0);
\draw (-1.5,2) -- (1.5,2);
\draw (-1.5,0) -- (-1.5,2);

\draw (0,-2)node{};
\draw (0,-2.35)node[nodelabel]{$w$};
%\draw (3,0)node[fill=black]{};
%\draw (3,0.35)node[nodelabel]{$x$};
%\draw (2.25,-0.3)node[nodelabel]{$\beta$};
\draw (0,0) node{};
\draw (0,-0.35)node[nodelabel]{$y_1$};
\draw (0,2) node[fill=black]{};
\draw (0,2.35) node[nodelabel]{$z_1$};
\draw (-1.5,0) node[fill=black]{}node[nodelabel,left]{$y_0$};
\draw (1.5,0) node[fill=black]{};
\draw (1.5,0.35) node[nodelabel]{$y_2$};
\draw (-1.5,2) node{}node[nodelabel,left]{$z_0$};
\draw (1.5,2) node{};
\draw (1.5,1.65) node[nodelabel]{$z_2$};
\draw (0,-3.5)node[nodelabel]{(b)};
\end{tikzpicture}
\vspace*{-0.05in}
\caption{When $u$ is adjacent with~$w$}
\label{fig:u-adjacent-with-w}
\bigskip
\end{figure}
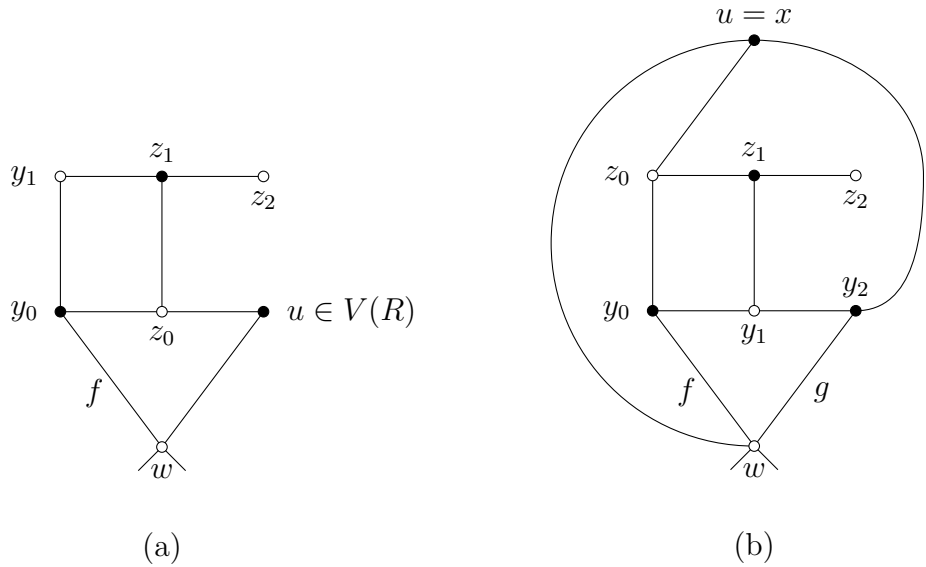

Suppose that $u$~and~$y_0$ have a common neighbour which is distinct from~$z_0$;
this is true if and only if $u$ is adjacent with~$w$.
We now invoke Lemma~\ref{lem:index-two-Rthin-edge-common-neighbour}, with $z_0z_1$ playing
the role of~$e$, with $u$ playing the role of~$y_2$, and with $uw$ playing the role of~$g$;
see Figure~\ref{fig:u-adjacent-with-w}a, and compare with Figure~\ref{fig:ladder-of-order-eight}.
The lemma implies that $u$ is a cubic vertex, and since $f$ is \Rcomp,
$uw$ is thin but it is not \Rcomp; furthermore, $u \in V(R)$.
In particular, $u$ is an end of~$\beta$ which implies that $u = x$;
see Figures~\ref{fig:ladder-of-order-eight} and \ref{fig:u-adjacent-with-w}b.
Note that all of the labelled vertices in Figure~\ref{fig:u-adjacent-with-w}b are pairwise distinct;
furthermore, each of them except $w$ and possibly~$z_2$, is cubic.
Since $z_2$ has at least one neighbour in~$B$ which is distinct from~$z_1$, the graph
has more vertices; consequently, $\{w,z_2\}$ is a $2$-vertex cut of~$G$; this
is a contradiction.

\smallskip
We have shown that $z_0$ is the only common neighbour of $u$~and~$y_0$;
as discussed earlier, this implies that $u$~and~$z_2$ are adjacent;
see Figure~\ref{fig:establish-symmetry}. Note that $u$ is now a common
neighbour of $z_0$ and $z_2$, and it is distinct from~$z_1$; this
establishes a symmetry between $y_0,y_1,y_2,w$, and $z_0,z_1,z_2,u$,
respectively.
We invoke
Lemma~\ref{lem:index-two-Rthin-edge-common-neighbour}
to conclude that $u$ is non-cubic,
whereas $z_2$ is cubic and it lies in~$V(R)$.
Using arguments analogous to those in the proofs
of Claims~\ref{claim:z0z1-Rcompatible}~and~\ref{claim:z0z1-Rthin},
we conclude that $y_0y_1$ is an \Rthin\ edge (whose index is two).

\begin{figure}[!ht]
\medskip
\centering
\begin{tikzpicture}[scale=0.9]
\draw (1.5,2) -- (0,4);
\draw (-1.5,2) -- (0,4);

\draw (0,4) -- (0.35,4.35);
\draw (0,4) -- (-0.35,4.35);

\draw (0,4)node[fill=black]{};
\draw (0,4.35)node[nodelabel]{$u$};

\draw (-1,-1.2)node[nodelabel]{$f$};
\draw (1,-1.2)node[nodelabel]{$g$};

\draw (-1.5,0) -- (0,-2);
\draw (1.5,0) -- (0,-2);

\draw (0,-2) -- (-0.35,-2.35);
\draw (0,-2) -- (0.35,-2.35);

\draw (1.5,0) -- (3,0);
\draw (0,0) -- (0,2);
\draw (0,0) -- (-1.5,0);
\draw (0,0) -- (1.5,0);
\draw (-1.5,2) -- (1.5,2);
\draw (-1.5,0) -- (-1.5,2);

\draw (1.5,2) -- (3,2);
\draw (3,2)node{};
\draw (2.25,2.25)node[nodelabel]{$\alpha$};

\draw (0,-2)node{};
\draw (0,-2.35)node[nodelabel]{$w$};
\draw (3,0)node[fill=black]{};
%\draw (3,0.35)node[nodelabel]{$x$};
\draw (2.25,-0.3)node[nodelabel]{$\beta$};
\draw (0,0) node{};
\draw (0,-0.35)node[nodelabel]{$y_1$};
\draw (0,2) node[fill=black]{};
\draw (0,2.35) node[nodelabel]{$z_1$};
\draw (-1.5,0) node[fill=black]{}node[nodelabel,left]{$y_0$};
\draw (1.5,0) node[fill=black]{};
\draw (1.5,0.35) node[nodelabel]{$y_2$};
\draw (-1.5,2) node{}node[nodelabel,left]{$z_0$};
\draw (1.5,2) node{};
\draw (1.5,1.65) node[nodelabel]{$z_2$};
\draw (0.25,1)node[nodelabel]{$e$};
\end{tikzpicture}
%\vspace*{-0.05in}
\caption{Illustration for Case 2.2 of the $R$-ladder Theorem;
$u$ is a common neighbour of $z_0$~and~$z_2$ which
is distinct from~$z_1$}
\label{fig:establish-symmetry}
\bigskip
\end{figure}
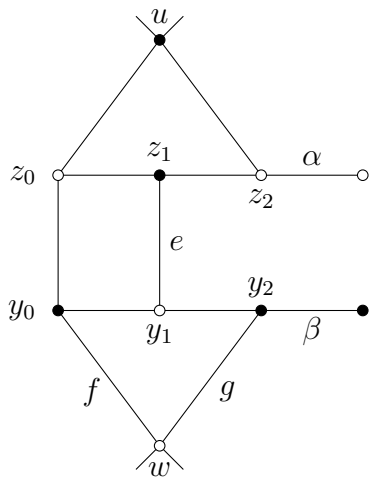

Now, let $K$ denote the subgraph which consists of all of the labelled
vertices shown in Figure~\ref{fig:establish-symmetry}, and all of the
edges between those vertices which are shown in the figure.
Note that $K$ is an \Rladder, and since it contains~$e$,
this completes the proof of the \mbox{$R$-ladder}
Theorem (\ref{thm:Rladder-configuration}).
\end{proofOf}

\section{Properties of {\Rconf}s}
\label{sec:properties-of-Rconfigurations}
In this section, we prove a few results pertaining to {\Rconf}s.
These are used in our proof of the
Strictly \Rthin\ Edge Theorem (\ref{thm:strictly-Rthin-nb-bricks}),
which appears in the next section.
We will find the following consequence of
\cite[Lemma~2.3]{koth19} useful; its proof
may be found in \cite{koth16}.
\begin{cor}
\label{cor:application-of-LV}
Let $G$ be an $R$-brick, and let $H:=G-R$. Then for any
vertex~$b$, at most two edges of $\partial_H(b)$ are
non-removable in~$H$.
\end{cor}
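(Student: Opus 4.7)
The plan is to argue by contradiction. Suppose some vertex~$b$ of~$H$ has three non-removable edges $bv_1, bv_2, bv_3$. Applying Lemma~\ref{lem:lovasz-vempala} with $r \geq 3$ yields partitions $(A_0, A_1, \dots, A_r)$ of~$A$ and $(B_0, B_1, \dots, B_r)$ of~$B$ such that $b \in B_0$, and for each $i \in \{1, \dots, r\}$, $v_i \in A_i$, $|A_i|=|B_i|$, and $N_H(A_i)=B_i \cup \{b\}$. A quick consequence of the third property is that $N_H(B_i) \subseteq A_0 \cup A_i$ for $i \geq 1$, since an edge from $B_i$ to $A_k$ with $1 \leq k \neq i$ would violate $N_H(A_k)=B_k \cup \{b\}$; moreover, $|A_0|=|B_0| \geq 1$ since $b \in B_0$.

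For each $i \in \{1,2,3\}$ set $X_i := A_i \cup B_i \cup \{b\}$, so $|X_i|=2|A_i|+1$ is odd. I first show that $\partial_H(X_i)$ is a tight cut of~$H$. By the preceding observations, $\partial_H(X_i) = \{bv_j : j \neq i\} \cup E(A_0, B_i)$. For any perfect matching~$M$ of~$H$, either $v_i$ is matched to~$b$ via~$bv_i$ (which lies inside~$X_i$), in which case the remaining $|A_i|-1$ vertices of~$A_i$ must be matched into~$B_i$, leaving one vertex of~$B_i$ that is then forced to match into~$A_0$ and contributes exactly one edge of $E(A_0, B_i)$; or $v_i$ is matched into~$B_i$, whence all of~$B_i$ is absorbed into~$A_i$ and $b$~must be matched via some $bv_j$ with $j \neq i$. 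In either case $|M \cap \partial_H(X_i)|=1$, so $\partial_H(X_i)$ is tight in~$H$.

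To transfer tightness to~$G$, let $\rho_i := |R \cap \partial_G(X_i)|$. Since $A_1, A_2, A_3$ are pairwise disjoint, the edge $\alpha=a_1a_2$ contributes to~$\rho_i$ for at most two values of $i \in \{1,2,3\}$ (namely the~$A_i$'s containing exactly one endpoint), and the same bound holds for $\beta=b_1b_2$ with $B_1, B_2, B_3$. Hence $\sum_{i=1}^{3} \rho_i \leq 4$, so some~$i^\star$ has $\rho_{i^\star} \leq 1$. A perfect matching~$M$ of~$G$ either satisfies $M \cap R=\emptyset$---in which case $M$ is a perfect matching of~$H$ and $|M \cap \partial_G(X_{i^\star})|=1$ by the previous paragraph---or $R \subseteq M$, in which case $M - R$ is a perfect matching of $H-\{a_1,a_2,b_1,b_2\}$; for the latter, a parallel two-case analysis gives $|(M-R) \cap \partial_H(X_{i^\star})|=1-\rho_{i^\star}$, whence $|M \cap \partial_G(X_{i^\star})|=1$. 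Therefore $\partial_G(X_{i^\star})$ is tight; and since $|X_{i^\star}| \geq 3$ and $|V(G) \setminus X_{i^\star}| \geq |A_j|+|B_j| \geq 2$ for any $j \in \{1,2,3\} \setminus \{i^\star\}$, this tight cut is nontrivial, contradicting the fact that $G$ is a brick.

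The hardest step is the $R \subseteq M$ subcase in the last paragraph: after deleting the four endpoints of~$R$, one must verify that exactly $1-\rho_{i^\star}$ edges of $\partial_H(X_{i^\star})$ still appear in $M - R$. This is driven by $\rho_{i^\star} \leq 1$ forcing at most one $R$-edge to straddle the cut, and by the fact that the partition produced by Lemma~\ref{lem:lovasz-vempala} continues to describe the local structure around~$X_{i^\star}$ after the removal; the small, finite number of placements of $\{a_1,a_2,b_1,b_2\}$ relative to $A_{i^\star}$ and $B_{i^\star} \cup \{b\}$ can be handled individually.
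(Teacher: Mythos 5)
Your reduction to the Lov\'asz--Vempala partition and your verification that each $\partial_H(X_i)$ is a tight cut of~$H$ are fine (modulo the cosmetic point that $\partial_H(X_i)$ also contains the edges from $b$ to $A_0$, which your case analysis handles anyway). The genuine gap is in the transfer to~$G$. First, a small issue: the bound $\sum_{i=1}^{3}\rho_i\le 4$ is not justified when $b$ itself is an end of~$\beta$, since $B_1\cup\{b\},B_2\cup\{b\},B_3\cup\{b\}$ all contain~$b$ and $\beta$ can then cross all three cuts (the sum is still at most $5<6$, so some $\rho_{i^\star}\le 1$ survives). Much more seriously, the identity $|(M-R)\cap\partial_H(X_{i^\star})|=1-\rho_{i^\star}$ is false, and $\rho_{i^\star}\le 1$ does not imply that $\partial_G(X_{i^\star})$ is tight. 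Concretely, suppose $\alpha$ has exactly one end in~$A_{i^\star}$ (the other in~$A_0$) and $\beta$ has no end in $B_{i^\star}\cup\{b\}$; then $\rho_{i^\star}=1$, but for a perfect matching $M\supseteq R$ the matching $N:=M-R$ must match the $|A_{i^\star}|-1$ surviving vertices of~$A_{i^\star}$ into $B_{i^\star}\cup\{b\}$ (because $N_H(A_{i^\star})=B_{i^\star}\cup\{b\}$), leaving two vertices of $B_{i^\star}\cup\{b\}$ to be matched across the cut; together with~$\alpha$ this gives $|M\cap\partial_G(X_{i^\star})|=3$. A similar failure occurs when $\alpha$ has both ends in~$A_{i^\star}$, where $\rho_{i^\star}=0$ but the count is again~$3$. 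In general, for $M\supseteq R$ one has $|M\cap\partial_G(X_i)|=(1+p_i-q_i)+\rho_i$, where $p_i$ and $q_i$ count the ends of $\alpha$ in $A_i$ and of $\beta$ in $B_i\cup\{b\}$; so your selection rule can land on a cut that is not tight and yields no contradiction with $G$ being a brick.

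The argument is repairable: since $A_1,A_2,A_3$ are disjoint and $\alpha$ has only two ends, some $i^\star$ has $p_{i^\star}=0$; for that index either $q_{i^\star}=2$, in which case no perfect matching of~$G$ contains~$R$ (contradicting that $\alpha$ is admissible), or $q_{i^\star}\le 1$ and the count above equals~$1$, so $\partial_G(X_{i^\star})$ is a nontrivial tight cut --- a contradiction. The paper's own proof sidesteps all of this bookkeeping: it notes that each $B_i\cup\{b\}$ is a nontrivial barrier of~$H$, and since a brick has no nontrivial barriers, $\alpha$ (the only edge of~$R$ that can merge the isolated vertices of any $A_i$ with the rest) must have an end in each of the three pairwise disjoint sets $A_1,A_2,A_3$, which is impossible. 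Your tight-cut route is essentially the barrier-cut version of the same idea, but the interaction of~$R$ with the cut is precisely the step you deferred, and it is where the proposal breaks.
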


\smallskip
For the rest of this section, $G$ is a simple \Rbrick, and we
adopt Notation~\ref{Not:Rbrick-doubleton};
furthermore, $K_1$ is an \Rconf\ with \external\ $a_1u_1$~and~$b_1w_1$.
As usual, $u_1$~and~$w_1$ are the free corners of~$K_1$;
see Figure~\ref{fig:first-Rconfiguration}.

\begin{figure}[!ht]
\centering
%\vspace*{-0.1in}
\begin{tikzpicture}[scale=0.8]
\draw (1,0) -- (-0.5,0);
\draw (0.25,0.3)node[nodelabel]{$\alpha$};

\draw (6,0) -- (7.5,0);
\draw (6.75,-0.35)node[nodelabel]{$\beta$};

\draw (1,0) -- (6,0);

\draw (1,0) -- (3.5,2);
\draw (2,0) -- (3.5,-2);
\draw (3,0) -- (3.5,2);
\draw (4,0) -- (3.5,-2);
\draw (5,0) -- (3.5,2);
\draw (6,0) -- (3.5,-2);

%\draw (1,0) node{}node[left,nodelabel]{$a_1 \in V(R)$};
\draw (1,0) node{}node[below,nodelabel]{$a_1$};
\draw (2,0) node[fill=black]{};
\draw (3,0) node{};
\draw (4,0) node[fill=black]{};
\draw (5,0) node{};
\draw (6,0) node[fill=black]{}node[above,nodelabel]{$b_1$};
%\draw (6,0) node[fill=black]{}node[right,nodelabel]{$b_1 \in V(R)$};
\draw (3.5,2) node[fill=black]{}node[above,nodelabel]{$u_1$};
\draw (3.5,-2) node{}node[below,nodelabel]{$w_1$};

\draw (3.5,-3.7)node[nodelabel]{(a)};
\end{tikzpicture}

\begin{tikzpicture}[scale=0.9]
\draw (1,0) -- (-0.5,0);
\draw (0.25,0.3)node[nodelabel]{$\alpha$};

\draw (3,2) -- (4.5,2);
\draw (3.75,1.65)node[nodelabel]{$\beta$};

\draw (1,0) -- (1,2);
\draw (2,0) -- (2,2);
\draw (3,0) -- (3,2);

\draw (1,0) -- (3,0);
\draw (1,2) -- (3,2);

%\draw (1,0) node{}node[left,nodelabel]{$a_1 \in V(R)$};
\draw (1,0) node{}node[below,nodelabel]{$a_1$};
\draw (1,2) node[fill=black]{}node[above,nodelabel]{$u_1$};
\draw (2,2) node{};
\draw (2,0) node[fill=black]{};
\draw (3,0) node{}node[below,nodelabel]{$w_1$};
\draw (3,2) node[fill=black]{}node[above,nodelabel]{$b_1$};
%\draw (3,2) node[fill=black]{}node[right,nodelabel]{$b_1 \in V(R)$};

\draw (2,-1.5)node[nodelabel]{(b)};
\end{tikzpicture}
\hspace*{1in}
\begin{tikzpicture}[scale=0.9]
\draw (1,0) -- (-0.5,0);
\draw (0.25,0.3)node[nodelabel]{$\alpha$};

\draw (4,0) -- (5.5,0);
\draw (4.75,0.3)node[nodelabel]{$\beta$};

\draw (1,0) -- (1,2);
\draw (2,0) -- (2,2);
\draw (3,0) -- (3,2);
\draw (4,0) -- (4,2);

\draw (1,0) -- (4,0);
\draw (1,2) -- (4,2);

%\draw (1,0) node{}node[left,nodelabel]{$a_1 \in V(R)$};
\draw (1,0) node{}node[below,nodelabel]{$a_1$};
\draw (1,2) node[fill=black]{}node[above,nodelabel]{$u_1$};
\draw (2,2) node{};
\draw (2,0) node[fill=black]{};
\draw (3,0) node{};
\draw (3,2) node[fill=black]{};
\draw (4,2) node{}node[above,nodelabel]{$w_1$};
\draw (4,0) node[fill=black]{}node[below,nodelabel]{$b_1$};
%\draw (4,0) node[fill=black]{}node[right,nodelabel]{$b_1 \in V(R)$};

\draw (2.5,-1.5)node[nodelabel]{(c)};
\end{tikzpicture}
\vspace*{-0.1in}
\caption{The \Rconf~$K_1$}
\label{fig:first-Rconfiguration}
\bigskip
\end{figure}

Note that $K_1$ is either a ladder or a partial biwheel.
In either case, it is easily verified that the graph obtained from~$K_1$
by adding two edges, one joining $a_1$~and~$b_1$, and another joining
$u_1$~and~$w_1$, is a brace. This fact, in conjunction
with the characterization of braces provided by
\cite[Proposition~4.12]{koth16},
yields the following easy observation.

\begin{prop}
\label{prop:bracelike-property-of-Rconfigurations}
The following statements hold:
\begin{enumerate}[(i)]
\item for every pair of distinct vertices \mbox{$v_1,v_2 \in A \cap V(K_1)$},
the graph $K_1 - \{b_1,u_1,v_1,v_2\}$ has a perfect matching; and likewise,
\item for every pair of distinct vertices \mbox{$v_1,v_2 \in B \cap V(K_1)$},
the graph $K_1 - \{a_1,w_1,v_1,v_2\}$ has a perfect matching. \qed
\end{enumerate}
\end{prop}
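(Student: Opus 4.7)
The plan is to use the parenthetical hint given in the paragraph preceding the statement. Let $K_1^+$ denote the graph obtained from $K_1$ by adding the two edges $a_1b_1$ and $u_1w_1$. Since $a_1,w_1\in A$ and $b_1,u_1\in B$, both added edges are bipartite, and so $K_1^+$ is bipartite with the same colour classes as $K_1$. The first step, which is the main step, is to verify that $K_1^+$ is a brace by checking Hall's margin-two condition (Proposition~\ref{prop:characterizations-of-braces}(ii)). When $K_1$ is a partial biwheel of order at least eight, $K_1^+$ is a biwheel in the sense of McCuaig~\cite{mccu01}, which is known to be a brace. When $K_1$ is a ladder, $K_1^+$ is a cubic bipartite graph (isomorphic to $K_{3,3}$ in the smallest case, to the cube $Q_3$ next, and to a cyclic or twisted cubic ladder in general), for which the margin-two condition can be verified by a routine counting argument on subsets of a colour class, using the fact that the two added closure edges provide two ``escape routes'' that the end-corners lacked in $K_1$.

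Once $K_1^+$ is known to be a brace, Proposition~\ref{prop:characterizations-of-braces}(iii) furnishes a perfect matching of $K_1^+-\{x_1,x_2,y_1,y_2\}$ for every pair of distinct vertices $x_1,x_2$ in one colour class and $y_1,y_2$ in the other. For statement~(i), we apply this with $\{x_1,x_2\}:=\{v_1,v_2\}\subseteq A$ and $\{y_1,y_2\}:=\{b_1,u_1\}\subseteq B$. The resulting perfect matching $M$ of $K_1^+-\{b_1,u_1,v_1,v_2\}$ cannot contain $a_1b_1$ (its endpoint $b_1$ has been deleted) nor $u_1w_1$ (its endpoint $u_1$ has been deleted); hence $M\subseteq E(K_1)$ is the desired perfect matching of $K_1-\{b_1,u_1,v_1,v_2\}$. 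Statement~(ii) follows by the symmetric choice $\{x_1,x_2\}:=\{a_1,w_1\}\subseteq A$ and $\{y_1,y_2\}:=\{v_1,v_2\}\subseteq B$, the two added edges being blocked this time by the deletions of $a_1$ and $w_1$, respectively.

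The only real obstacle is the brace verification for $K_1^+$ in the ladder case: although this involves more work than the partial biwheel case, it remains routine because $K_1^+$ is three-regular, bipartite, and has a very rigid (essentially circulant-like) structure. After that, the rest of the argument is the clean observation that, in each of the two prescribed deletion patterns, both added edges have an endpoint among the deleted vertices, so they cannot appear in any perfect matching we extract.
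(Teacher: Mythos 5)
Your proposal is correct and is essentially the paper's own argument: the paragraph preceding the proposition forms exactly the same graph $K_1^+$ by adding the edges $a_1b_1$ and $u_1w_1$, asserts (without further detail, just as you do) that it is a brace, and then invokes Proposition~\ref{prop:characterizations-of-braces}(iii). Your additional remark that each added edge has a deleted endpoint under both deletion patterns, so the extracted matching lies in $E(K_1)$, is a correct and slightly more explicit rendering of the same step.
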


In the following lemma, we prove some conformality
properties of {\Rconf}s; these are useful in subsequent lemmas
to show that a certain edge is \Rcomp.

\begin{lem}
\label{lem:conformality-of-Rconfigurations}
The following statements hold:
\begin{enumerate}[(i)]
\item $u_1$ lies in~$V(R)$ if and only if $w_1$ lies in $V(R)$,
\item $K_1$ is a conformal matching covered subgraph, and
\item the subgraph induced by $E(K_1) \cup R$ is conformal.
\end{enumerate}
\end{lem}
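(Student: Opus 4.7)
The plan is to prove the three statements in sequence, relying on one key structural observation: only the four corners $a_1, b_1, u_1, w_1$ of $K_1$ can have $G$-neighbours outside $V(K_1)$, since every other vertex of $K_1$ is cubic in $G$ with all three of its neighbours in $V(K_1)$. Moreover, $a_1$ and $b_1$ are themselves cubic in $G$, with $\alpha$ and $\beta$ being their unique non-$K_1$ edges, respectively.

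For (i), I would argue by contradiction. Suppose $u_1 \in V(R)$ but $w_1 \notin V(R)$; then $u_1 = b_2$, so $\beta = b_1 u_1$ is internal to $V(K_1)$, while $\alpha = a_1 a_2$ goes to $a_2 \notin V(K_1)$. Consequently $b_1$ has no external neighbour. Bicriticality of the brick $G$ (Theorem~\ref{thm:elp-bricks}) supplies a perfect matching $M$ of $G - \{a_1, w_1\}$. A color-class count of $V(K_1) \setminus \{a_1, w_1\}$ shows that its $B$-side exceeds its $A$-side by two; all $A$-vertices in this set are matched internally by $M$ (they have no external edges), forcing two $B$-vertices to be matched externally. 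But only $u_1$ among the $B$-vertices in this set has any external edges in $G$ (the non-corner $B$-vertices are cubic with all edges inside $V(K_1)$, and $b_1$ has none), so at most one external match is possible, a contradiction. The symmetric case $w_1 \in V(R)$, $u_1 \notin V(R)$ is handled by the analogous argument applied to the pair $\{b_1, u_1\}$.

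For (ii), $K_1$ is bipartite matching covered (Proposition~\ref{prop:characterizations-of-bipmcg}), so it remains to exhibit a perfect matching of $H - V(K_1)$. Take any PM $M$ of $H$. By the structural observation, $a_1$ and $b_1$ are matched internally to $V(K_1)$, and a parity argument forces $u_1, w_1$ to be either both matched internally or both externally by $M$. In the former subcase, $M \cap E(K_1)$ is a PM of $K_1$ and $M \setminus E(K_1)$ is the desired PM of $H - V(K_1)$. In the latter, I would swap along an alternating path in $K_1$ joining $u_1$ to $w_1$ constructed from the perfect matching of $K_1 - \{a_1, b_1, u_1, w_1\}$ guaranteed by Proposition~\ref{prop:bracelike-property-of-Rconfigurations}; the modified matching then has $M \cap E(K_1)$ a PM of $K_1$.

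For (iii), the subgraph $J$ induced by $E(K_1) \cup R$ has vertex set $V(K_1) \cup V(R)$. By (i), either $V(R) \subseteq V(K_1)$ (in which case $V(J) = V(K_1)$ and the conformality reduces to a variant of (ii) inside $G$ rather than $H$), or $V(R) \cap V(K_1) = \{a_1, b_1\}$ (so that $V(J) = V(K_1) \cup \{a_2, b_2\}$). In the latter case I start from a perfect matching of $G$ containing both $\alpha$ and $\beta$ (which exists because $R$ is a removable doubleton) and apply the same alternating-path modification to obtain a PM whose restriction to $V(J)$ is a PM of $J$; the rest is a PM of $G - V(J)$. The main obstacle throughout is carrying out the alternating-path swap in the ``both externally matched'' subcase, where Proposition~\ref{prop:bracelike-property-of-Rconfigurations} is essential because it supplies the PM of $K_1 - \{a_1, b_1, u_1, w_1\}$ needed to effect the swap.
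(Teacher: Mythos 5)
Your structural observation (only the four corners can see outside $V(K_1)$, and $a_1,b_1$ are cubic with $\alpha,\beta$ as their unique non-$K_1$ edges) is correct and is indeed the engine of the paper's proof as well, but two of your key deductions fail. In (i), the step ``forcing two $B$-vertices to be matched externally'' overlooks the edge $\beta=b_1b_2=b_1u_1$, which in the configuration you are refuting is a $B$--$B$ edge with \emph{both} ends inside $V(K_1)$: the two leftover $B$-vertices of $V(K_1)\setminus\{a_1,w_1\}$ could be precisely $b_1$ and $u_1$, matched to each other by $\beta$, and then your count yields no contradiction. (You cannot exclude $\beta$ from $M$ here: the equivalence ``$\alpha\in M$ iff $\beta\in M$'' holds for perfect matchings of $G$, not for perfect matchings of $G-\{a_1,w_1\}$, where $\alpha$ is automatically absent but $\beta$ need not be.) The paper closes exactly this loophole differently: it shows that $\partial(X)$ with $X:=V(K_1)-w_1$ is a nontrivial tight cut, using the $\alpha$/$\beta$ coupling for genuine perfect matchings of $G$ together with the parity of $|M\cap\partial(X)|$, contradicting the definition of a brick.

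In (ii) and (iii), the ``both matched externally'' subcase cannot be repaired by an alternating-path exchange inside $K_1$. If $M$ matches $u_1$ to $x\notin V(K_1)$ and $w_1$ to $y\notin V(K_1)$, then any swap along a $u_1$--$w_1$ path $P\subseteq K_1$ either double-covers $u_1$ and $w_1$ (if you keep $u_1x,w_1y$) or leaves $x$ and $y$ uncovered (if you discard them); in neither case do you obtain the perfect matching of $H-V(K_1)$ that conformality requires, and Proposition~\ref{prop:bracelike-property-of-Rconfigurations} does not help with this. The paper avoids the case split altogether: for (ii) it starts from a perfect matching of $H$ that \emph{contains the edge $a_1u_1$}, so that $u_1$ is matched internally and the parity count forces $w_1$ to be matched internally too; for (iii) it takes a perfect matching of $G-\{a_2,w_1\}$, which must contain $\beta$, and a count shows its restriction outside $V(K_1)\cup V(R)$ is the required perfect matching. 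You should replace the exchange step by this choice of initial matching.
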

\begin{proof}
First, we prove {\it (i)}.
Suppose instead that $u_1 \in V(R)$ and $w_1 \notin V(R)$;
that is, $u_1=b_2$, whereas $w_1$~and~$a_2$ are distinct;
see Figure~\ref{fig:only-one-free-corner-in-VR}.
For $X:=V(K_1)-w_1$, note that every edge in~$\partial(X)$,
except for~$\alpha$, is either incident with~$u_1$ or with~$w_1$.
Recall that if $M$ is any perfect matching,
then $\alpha \in M$ if and only if $\beta \in M$.
Using these facts,
it is easy to see that $\partial(X)$ is a tight cut; this
is a contradiction.

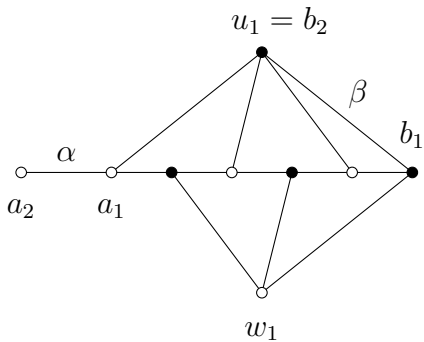
\begin{figure}[!ht]
\centering
%\vspace*{-0.1in}
\smallskip
\begin{tikzpicture}[scale=0.8]
\draw (3.8,1.4) node[above,nodelabel]{$u_1=b_2$};
\draw (1,0) -- (-0.5,0);
\draw (0.25,0.3)node[nodelabel]{$\alpha$};
\draw (-0.5,0)node{}node[below,nodelabel]{$a_2$};

%\draw (6,0) -- (7.5,0);
%\draw (6.75,-0.35)node[nodelabel]{$\beta$};

\draw (6,0) -- (3.5,2);
\draw (5.1,1.3)node[nodelabel]{$\beta$};

\draw (1,0) -- (6,0);

\draw (1,0) -- (3.5,2);
\draw (2,0) -- (3.5,-2);
\draw (3,0) -- (3.5,2);
\draw (4,0) -- (3.5,-2);
\draw (5,0) -- (3.5,2);
\draw (6,0) -- (3.5,-2);

%\draw (1,0) node{}node[left,nodelabel]{$a_1 \in V(R)$};
\draw (1,0) node{}node[below,nodelabel]{$a_1$};
\draw (2,0) node[fill=black]{};
\draw (3,0) node{};
\draw (4,0) node[fill=black]{};
\draw (5,0) node{};
\draw (6,0) node[fill=black]{}node[above,nodelabel]{$b_1$};
%\draw (6,0) node[fill=black]{}node[right,nodelabel]{$b_1 \in V(R)$};
\draw (3.5,2) node[fill=black]{};
\draw (3.5,-2) node{}node[below,nodelabel]{$w_1$};
%\draw (3.5,-3.7)node[nodelabel]{(a)};
\end{tikzpicture}
%\vspace*{-0.1in}
\caption{$\partial(X)$ is a nontrivial tight cut, where $X:=V(K_1)-w_1$}
\label{fig:only-one-free-corner-in-VR}
\bigskip
\end{figure}

Now, we prove~{\it (ii)}.
Since $K_1$ is either a ladder or a partial biwheel, it is matching covered.
To show that $K_1$ is conformal, we will display a perfect matching~$M$
of~$G-V(K_1)$. Let $N$ be a perfect matching of~$H$ which contains~$a_1u_1$; observe that $M:=N-E(K_1)$ is the
desired perfect matching.

\smallskip
Note that, if $u_1,w_1 \in V(R)$,
then {\it (iii)} follows immediately from~{\it (ii)}.
Now suppose that $u_1,w_1 \notin V(R)$,
and let $N$ be a perfect matching of~$G-\{a_2,w_1\}$;
note that $\beta \in N$.
A simple counting argument shows that $M:=N - E(K_1) - R$ is a perfect
matching of~$G-V(K_1)-V(R)$; and this proves {\it (iii)}.
\end{proof}

In the following two lemmas, apart from other things, we show
that under certain circumstances there exists an \Rcomp\ edge
which is not in~$K_1$.

\begin{lem}
\label{lem:find-Rcompatible-edge-at-high-degree-free-corner}
Suppose that $u_1,w_1 \notin V(R)$.
Then at most
one edge of~$\partial(u_1)-E(K_1)$ is not \Rcomp.
{\rm (}An analogous statement holds for~$w_1$.{\rm )}
\end{lem}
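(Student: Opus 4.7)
My plan is to split the statement into two independent reductions: (a) every edge of $\partial(u_1) \setminus E(K_1)$ that is removable in the underlying bipartite graph $H$ is automatically \Rcomp; and (b) at most one edge of $\partial(u_1) \setminus E(K_1)$ fails to be removable in $H$. Combining (a) and (b) yields the lemma.

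For (a), the key tool will be Lemma~\ref{lem:conformality-of-Rconfigurations}(iii), which says that the subgraph $L$ induced by $E(K_1) \cup R$ is a conformal matching covered subgraph of $G$. Because $u_1, w_1 \notin V(R)$ (and an easy check, using cubicness of the internal vertices of $K_1$, shows $a_2, b_2 \notin V(K_1)$), the vertices $a_2$ and $b_2$ are pendants of $L$, each incident only to its $R$-edge; so every perfect matching $M_L$ of $L$ must contain both $\alpha$ and $\beta$. Combining $M_L$ with a perfect matching $M_0$ of $G - V(L)$ (which exists by conformality) produces a perfect matching of $G$ that contains $R$ and avoids any prescribed $f \in \partial(u_1) \setminus E(K_1)$: indeed $f \notin E(L)$ while $u_1 \in V(L)$, so neither $M_L$ nor $M_0$ uses $f$. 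This witnesses admissibility of $\alpha$ and $\beta$ in $G - f$, and admissibility of every other edge follows from the matching coveredness of $H - f$. Thus $G - f$ is matching covered and $f$ is \Rcomp.

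For (b), I will argue by contradiction: suppose $f = u_1 v$ and $f' = u_1 v'$, both in $\partial(u_1) \setminus E(K_1)$, are non-removable in $H$. Applying the Lov\'asz--Vempala Lemma~(\ref{lem:lovasz-vempala}) at $u_1$ produces partitions $(A_0, \dots, A_r)$ of $A$ and $(B_0, \dots, B_r)$ of $B$ with $u_1 \in B_0$, $r \geq 2$, and (after relabelling) $v \in A_1$, $v' \in A_2$; moreover $f$ is the unique $B_0$-$A_1$ edge and $f'$ the unique $B_0$-$A_2$ edge. Mimicking the barrier-killing argument in the proof of Corollary~\ref{cor:application-of-LV}, each $B_i \cup \{u_1\}$ is a nontrivial barrier of $H$ lying inside $B$; since $\beta$ has both ends in $B$, only $\alpha$ can destroy such a barrier, so $\alpha$ must have an end in every $A_i$. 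With only two ends available, this forces $r = 2$ and $\alpha$ placing one end in $A_1$ and the other in $A_2$.

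The contradiction will then come from the external rung or spoke $a_1 u_1 \in E(K_1)$: if $a_1 \in A_i$ for $i \in \{1,2\}$, Lov\'asz--Vempala uniqueness forces $u_1 a_1 \in \{f, f'\}$, contradicting $f, f' \notin E(K_1)$; and if $a_1 \in A_0$, then $\alpha = a_1 a_2$ has at most one end in $A_1 \cup A_2$ (namely through $a_2$), contradicting the previous paragraph. Since every case fails, (b) holds and the lemma follows. The main obstacle I foresee is the setup for (a): one must cleanly verify that $a_2, b_2$ lie outside $V(K_1)$ and are pendants of $L$, and that $M_L \cup M_0$ really is a perfect matching of $G - f$ containing $R$. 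Once the conformal picture is in place, (b) is essentially a reprise of the Lov\'asz--Vempala argument already used in Corollary~\ref{cor:application-of-LV}.
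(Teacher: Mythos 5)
Your proposal is correct, and most of it is the paper's own argument. Your part (a) is the paper's construction verbatim: the paper takes a perfect matching $M_1$ of $G-V(K_1)-V(R)$ (from Lemma~\ref{lem:conformality-of-Rconfigurations}(iii)) and a perfect matching $M_2$ of $K_1-\{a_1,b_1\}$ (from Proposition~\ref{prop:characterizations-of-bipmcg}), and uses $M_1\cup M_2\cup R$; your $M_L\cup M_0$ is exactly this matching, so you should cite Proposition~\ref{prop:characterizations-of-bipmcg} to justify that $M_L$ exists, and note that your claim that $L$ is \emph{matching covered} is actually false (in $L$ every perfect matching contains $\alpha$ and $\beta$, so the external rungs/spokes $a_1u_1$ and $b_1w_1$ are inadmissible in $L$) --- harmless, since your argument only uses the existence of a perfect matching of $L$ and the conformality statement. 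Part (b) is where you differ: the paper simply quotes Corollary~\ref{cor:application-of-LV} (at most two edges of $\partial_H(u_1)$ are non-removable in $H$) and observes that $a_1u_1\in E(K_1)$ is already one of them, since $a_1$ is a cubic vertex lying in $V(R)$ and hence has degree two in $H$; you instead rerun the Lov{\'a}sz--Vempala barrier argument directly on two hypothetical non-removable edges outside $E(K_1)$ and get the contradiction from where $a_1$ can sit in the partition (uniqueness of the $B_0$--$A_i$ edge if $a_1\in A_1\cup A_2$, and the placement of the two ends of $\alpha$ if $a_1\in A_0$). Both routes rest on Lemma~\ref{lem:lovasz-vempala} and on the absence of nontrivial barriers in a brick, so they are essentially the same argument; the paper's version is shorter, while yours avoids having to notice that $a_1u_1$ is itself non-removable in $H$. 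Two small points to make explicit in a final write-up: Lemma~\ref{lem:lovasz-vempala} applies at $u_1$ because $u_1\notin V(R)$ gives $\deg_H(u_1)=\deg_G(u_1)\ge 3$, and in (a) the conclusion that $G-f$ is matching covered also uses that $G-f$ is connected, which is immediate from $3$-connectivity.
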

\begin{proof}
Note that, by Corollary~\ref{cor:application-of-LV},
at most two edges of~$\partial(u_1)$
are non-removable in~$H$; one of these is $a_1u_1$.
Consequently, at most one edge of~$\partial(u_1)-E(K_1)$
is non-removable in~$H$.
To complete the proof we will show that if $e$ is any removable edge of~$H$
such that $e \in \partial(u_1)-E(K_1)$, then $e$ is removable in~$G$ as well;
for this, it suffices to show a perfect matching~$M$
which contains $\alpha$~and~$\beta$ but does not contain~$e$.

\smallskip
Let $M_1$ be a perfect matching of~$G-V(K_1) - V(R)$;
such a perfect matching exists by
Lemma~\ref{lem:conformality-of-Rconfigurations}{\it (iii)}.
Let $M_2$ be a perfect matching of~$K_1-\{a_1,b_1\}$;
since $K_1$ is bipartite matching covered, such a perfect matching
exists by \cite[Proposition~2.1]{koth19}.
Now, \mbox{$M:=M_1 \cup M_2 \cup R$} is the desired perfect matching
alluded to above, and this completes the proof.
\end{proof}

\begin{figure}[!ht]
\centering
%\vspace*{-0.1in}
%\medskip
\begin{tikzpicture}[scale=0.8]
\draw (3.5,2) -- (3.5,3.2);
\draw (3.5,-2) -- (3.5,-3.2);

\draw (3.15,2.6)node[nodelabel]{$\alpha'$};
\draw (3.85,-2.6)node[nodelabel]{$\beta'$};

\draw (1,0) -- (-0.5,0);
\draw (0.25,0.3)node[nodelabel]{$\alpha$};
\draw (-0.5,0)node{}node[below,nodelabel]{$a_2$};

\draw (6,0) -- (7.5,0);
\draw (6.75,-0.35)node[nodelabel]{$\beta$};
\draw (7.5,0)node[fill=black]{}node[above,nodelabel]{$b_2$};

\draw (1,0) -- (6,0);

\draw (1,0) -- (3.5,2);
\draw (2,0) -- (3.5,-2);
\draw (3,0) -- (3.5,2);
\draw (4,0) -- (3.5,-2);
\draw (5,0) -- (3.5,2);
\draw (6,0) -- (3.5,-2);

%\draw (1,0) node{}node[left,nodelabel]{$a_1 \in V(R)$};
\draw (1,0) node{}node[below,nodelabel]{$a_1$};
\draw (2,0) node[fill=black]{};
\draw (3,0) node{};
\draw (4,0) node[fill=black]{};
\draw (5,0) node{};
\draw (6,0) node[fill=black]{}node[above,nodelabel]{$b_1$};
%\draw (6,0) node[fill=black]{}node[right,nodelabel]{$b_1 \in V(R)$};
\draw (3.5,2) node[fill=black]{}node[right,nodelabel]{$u_1$};
%\draw (3.5,2) node[fill=black]{};
%\draw (3.8,2.3) node[nodelabel]{$u_1$};
%\draw (3.5,-2) node{};
%\draw (3.2,-2.3) node[nodelabel]{$w_1$};
\draw (3.5,-2) node{}node[left,nodelabel]{$w_1$};

\draw (3.5,-4.5)node[nodelabel]{(a)};
\end{tikzpicture}

\begin{tikzpicture}[scale=0.9]
\draw (1,2) -- (-0.5,2);
\draw (0.25,1.7)node[nodelabel]{$\alpha'$};

\draw (1,0) -- (-0.5,0);
\draw (0.25,0.3)node[nodelabel]{$\alpha$};
\draw (-0.5,0)node{}node[below,nodelabel]{$a_2$};

\draw (3,0) -- (4.5,0);
\draw (3.75,0.3)node[nodelabel]{$\beta'$};

\draw (3,2) -- (4.5,2);
\draw (3.75,1.65)node[nodelabel]{$\beta$};
\draw (4.5,2)node[fill=black]{}node[above,nodelabel]{$b_2$};

\draw (1,0) -- (1,2);
\draw (2,0) -- (2,2);
\draw (3,0) -- (3,2);

\draw (1,0) -- (3,0);
\draw (1,2) -- (3,2);

%\draw (1,0) node{}node[left,nodelabel]{$a_1 \in V(R)$};
\draw (1,0) node{}node[below,nodelabel]{$a_1$};
\draw (1,2) node[fill=black]{}node[above,nodelabel]{$u_1$};
\draw (2,2) node{};
\draw (2,0) node[fill=black]{};
\draw (3,0) node{}node[below,nodelabel]{$w_1$};
\draw (3,2) node[fill=black]{}node[above,nodelabel]{$b_1$};
%\draw (3,2) node[fill=black]{}node[right,nodelabel]{$b_1 \in V(R)$};

\draw (2,-1.5)node[nodelabel]{(b)};
\end{tikzpicture}
\hspace*{1in}
\begin{tikzpicture}[scale=0.9]
\draw (1,0) -- (-0.5,0);
\draw (0.25,0.3)node[nodelabel]{$\alpha$};
\draw (-0.5,0)node{}node[below,nodelabel]{$a_2$};

\draw (1,2) -- (-0.5,2);
\draw (0.25,1.7)node[nodelabel]{$\alpha'$};

\draw (4,0) -- (5.5,0);
\draw (4.75,0.3)node[nodelabel]{$\beta$};
\draw (5.5,0)node[fill=black]{}node[below,nodelabel]{$b_2$};

\draw (4,2) -- (5.5,2);
\draw (4.75,1.7)node[nodelabel]{$\beta'$};

\draw (1,0) -- (1,2);
\draw (2,0) -- (2,2);
\draw (3,0) -- (3,2);
\draw (4,0) -- (4,2);

\draw (1,0) -- (4,0);
\draw (1,2) -- (4,2);

%\draw (1,0) node{}node[left,nodelabel]{$a_1 \in V(R)$};
\draw (1,0) node{}node[below,nodelabel]{$a_1$};
\draw (1,2) node[fill=black]{}node[above,nodelabel]{$u_1$};
\draw (2,2) node{};
\draw (2,0) node[fill=black]{};
\draw (3,0) node{};
\draw (3,2) node[fill=black]{};
\draw (4,2) node{}node[above,nodelabel]{$w_1$};
\draw (4,0) node[fill=black]{}node[below,nodelabel]{$b_1$};
%\draw (4,0) node[fill=black]{}node[right,nodelabel]{$b_1 \in V(R)$};

\draw (2.5,-1.5)node[nodelabel]{(c)};
\end{tikzpicture}
%\vspace*{-0.1in}
\caption{When $|\partial(u_1)-E(K_1)| = |\partial(w_1)-E(K_1)| = 1$}
\label{fig:only-one-edge-at-each-free-corner}
\bigskip
\end{figure}

\begin{lem}
\label{lem:find-Rcompatible-when-free-corners-low-degree}
Suppose that $u_1,w_1 \notin V(R)$.
If {$|\partial(u_1) - E(K_1)| \leq 1$}
and {$|\partial(w_1) - E(K_1)| \leq 1$}
then the following statements hold:
\begin{enumerate}[(i)]
\item $u_1$ and $w_1$ are nonadjacent,
\item $\partial(u_1) - E(K_1)$ has exactly one member, say~$\alpha'$,
and likewise, $\partial(w_1)-E(K_1)$ has exactly one member, say~$\beta'$,
\item $\alpha$~and~$\alpha'$ are adjacent if and only if
$\beta$~and~$\beta'$ are adjacent,
\item if $\alpha$~and~$\alpha'$ are nonadjacent then
at most one edge of~$\partial(v)-\alpha'$ is not \Rcomp, where $v$
denotes the end of~$\alpha'$ which is distinct from~$u_1$;
an analogous statement holds for $\beta$~and~$\beta'$.
\end{enumerate}
\end{lem}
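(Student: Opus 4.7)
The plan is to exploit cut-parity arguments: for any $X \subseteq V(G)$ and any perfect matching $M$, the size $|M \cap \partial(X)|$ has the same parity as $|X|$, and the doubleton property of $R$ gives $\alpha \in M$ iff $\beta \in M$. Combined with the structural fact that, inside any $R$-configuration, $a_1$, $b_1$ and all non-corner vertices are cubic with their edges confined to $E(K_1) \cup R$, this lets one control $\partial(V(K_1))$ and its small enlargements.

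For (i), suppose $u_1 w_1 \in E(G)$. Since $u_1$ and $w_1$ are nonadjacent in $K_1$ (by inspection of ladders and partial biwheels), this edge is the unique element of both $\partial(u_1) - E(K_1)$ and $\partial(w_1) - E(K_1)$, so $\partial(V(K_1)) = R$. Then $H = G - R$ is disconnected, contradicting the connectivity of a matching covered graph. For (ii), if $|\partial(u_1) - E(K_1)| = 0$, then $|\partial(V(K_1))| \leq 3$ and consists of $\alpha$, $\beta$, and at most one additional edge $\alpha''$; parity (even, since $|V(K_1)|$ is even) together with the equivalence $\alpha \in M \iff \beta \in M$ forces $\alpha'' \notin M$ for every perfect matching, so $\alpha''$ would be inadmissible --- impossible. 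The analogous argument gives $\beta'$.

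For (iii), suppose $u_1 \sim a_2$, so $\alpha' = u_1 a_2$, while $w_1 \not\sim b_2$. Set $X := V(K_1) \cup \{a_2\}$; then $|X|$ is odd, forcing $|M \cap \partial(X)|$ odd. A three-case analysis based on whether $\alpha$, $\alpha'$, or neither lies in $M$ (with $\beta$ tied to $\alpha$, and $a_2$ matched to exactly one of its neighbors) shows $|M \cap \partial(X)| = 1$ in every case, so $\partial(X)$ is a tight cut. Since $G$ is a brick and $|X| \geq 7$, we must have $|V(G) - X| = 1$, i.e., $V(G) = V(K_1) \cup \{a_2, b_2\}$. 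But the other end $z$ of $\beta'$ lies in $B \setminus V(K_1)$ --- no $B$-chord from $w_1$ inside $K_1$ exists, because every non-corner $B$-vertex of $K_1$ is cubic with edges in $E(K_1)$, and $u_1 \not\sim w_1$ by~(i) --- whence $z = b_2$, contradicting the assumption.

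For (iv), one first checks that $v \notin V(R) \cup V(K_1)$: since $\alpha \not\sim \alpha'$, $v \ne a_2$; since $\alpha' \notin E(K_1)$, $v \ne a_1$; and no other $A$-vertex of $V(K_1)$ can be adjacent to $u_1$ via a chord, as each is cubic with edges in $E(K_1)$. By Corollary~\ref{cor:application-of-LV}, $\partial_H(v)$ has at most two non-removable edges in $H$; a Lov\'asz--Vempala argument applied at $v$ rules out two non-removable edges in $\partial(v) - \{\alpha'\}$ by producing two disjoint nontrivial barriers of $H$ contained in $A$ that the two edges of $R$ cannot simultaneously kill under the hypothesis $\alpha \not\sim \alpha'$. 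For any remaining edge $e \in \partial(v) - \{\alpha'\}$ that is removable in $H$, a perfect matching of $G$ containing $R$ and avoiding $e$ is produced by combining a perfect matching of $K_1 - \{a_1, b_1\}$ (exists since $K_1$ is bipartite matching covered) with a perfect matching of $G - V(K_1) - \{a_2, b_2\}$ avoiding $e$, whose existence follows from the conformality asserted in Lemma~\ref{lem:conformality-of-Rconfigurations}(iii). The main obstacle is precisely in (iv): in Lemma~\ref{lem:find-Rcompatible-edge-at-high-degree-free-corner}, the $H$-degree-two vertex $a_1$ automatically forced $a_1 u_1$ to be non-removable and absorbed the Corollary~\ref{cor:application-of-LV} budget, whereas here $v$'s neighbors enjoy no such forced non-removability, so the barrier-counting must be done by delicate use of the rigidity of $R$ in the $R$-brick.
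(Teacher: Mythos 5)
Your parts (i)--(iii) are essentially sound. For (i) and (ii) the paper argues more directly (adjacency of $u_1$ and $w_1$, or both sets being empty, makes $\{a_1,b_1\}$ a $2$-vertex cut; exactly one set being empty makes the surviving edge a cut edge of $H$), but your cut-parity versions work; note only that your inadmissibility argument in (ii) says nothing in the subcase where \emph{both} sets are empty, so you still need the observation from your own part (i) that $\partial(V(K_1))=R$ disconnects $H$. Your (iii), via the tight cut $\partial(V(K_1)\cup\{a_2\})$, is a correct alternative to the paper's barrier argument.

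The genuine gap is in (iv), and it is twofold. First, your plan to bound the non-removable edges of $\partial(v)-\alpha'$ by ``producing two disjoint nontrivial barriers of $H$ contained in $A$ that the two edges of $R$ cannot simultaneously kill'' does not go through as stated: the Lov\'asz--Vempala barriers at $v$ all contain $v$ (so they are not disjoint), and the associated isolated-vertex sets $B_1,B_2$ are merely disjoint subsets of $B$, so a single edge $\beta$ with one end in $B_1$ and the other in $B_2$ could kill both barriers; nothing in the hypothesis $\alpha\not\sim\alpha'$ visibly prevents this, and you concede the difficulty yourself (``the barrier-counting must be done by delicate use of the rigidity of $R$''). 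The forced non-removability you declare unavailable is in fact available and is exactly what the paper uses: $\partial_H(V(K_1))=\{\alpha',\beta'\}$ is a $2$-edge-cut of $H$, so $\alpha'$ is non-removable in $H$ (deleting it leaves $\beta'$ as a cut edge of a matching covered graph), and Corollary~\ref{cor:application-of-LV} then immediately leaves at most one non-removable edge in $\partial(v)-\alpha'$. Second, your construction of a perfect matching of $G$ containing $R$ and avoiding a given $H$-removable edge $e\in\partial(v)-\alpha'$ requires a perfect matching of $G-V(K_1)-\{a_2,b_2\}$ that \emph{avoids} $e$; Lemma~\ref{lem:conformality-of-Rconfigurations}(iii) supplies only some perfect matching of that graph, which may well use $e$. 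The paper circumvents this by starting from a perfect matching of $G-\{a_2,v\}$ (bicriticality), showing by parity that it contains $\beta$ and $\beta'$, and then exchanging its $K_1$-part for a perfect matching of $K_1-\{a_1,u_1,b_1,w_1\}$ together with $\alpha$ and $\alpha'$; since $\alpha'$ covers $v$, the resulting matching automatically misses every edge of $\partial(v)-\alpha'$.
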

\begin{proof}
We first verify {\it (i)} and {\it (ii)}.
Observe that,
if $u_1$~and~$w_1$ are adjacent,
or, if the sets \mbox{$\partial(u_1)-E(K_1)$}
and \mbox{$\partial(w_1)-E(K_1)$} are both
empty, then $\{a_1,b_1\}$ is a $2$-vertex cut of~$G$; this is absurd.
This proves {\it (i)}.
Note that, if only one of \mbox{$\partial(u_1)-E(K_1)$}
and \mbox{$\partial(w_1)-E(K_1)$}
is nonempty then~$H$
has a cut-edge; this is a contradiction.
This proves {\it (ii)}.
As in the statement, let $\alpha'$ denote
the only member of~$\partial(u_1)-E(K_1)$; and likewise,
let $\beta'$ denote the only member of $\partial(w_1)-E(K_1)$.
See Figure~\ref{fig:only-one-edge-at-each-free-corner}.

\smallskip
We now show that {\it (iii)} holds.
Suppose instead that $\beta$~and~$\beta'$
are adjacent, whereas $\alpha$~and~$\alpha'$ are nonadjacent.
In particular, $\beta'$ has ends $w_1$~and~$b_2$.
We let \mbox{$T:=B - V(K_1) - b_2$}, and note that $T$ is nonempty.
Furthermore, all of the neigbours of~$T$ lie in the
set \mbox{$S:=A-V(K_1)$};
consequently, $S$ is a nontrivial barrier of~$G$;
this is absurd.

\smallskip
We now proceed to prove {\it (iv)}.
Suppose that $\alpha$~and~$\alpha'$ are nonadjacent;
and as in the statement of the lemma,
let $v$ denote the end of~$\alpha'$ which is distinct from~$u_1$.
By {\it (iii)},
$\beta$~and~$\beta'$ are also nonadjacent.
We will first argue that at most one edge of~$\partial(v)-\alpha'$
is non-removable in~$H$.

\smallskip
Observe that $\{\alpha',\beta'\}$ is a $2$-cut of~$H$; thus,
neither~$\alpha'$ nor~$\beta'$ is removable in~$H$.
By Corollary~\ref{cor:application-of-LV}, at most two edges of~$\partial(v)$
are non-removable in~$H$; one of these is~$\alpha'$.
Consequently, at most one edge of $\partial(v)-\alpha'$ is
non-removable in~$H$. To complete the proof we will show that
if $e$ is any removable edge of~$H$ such that $e \in \partial(v)-\alpha'$,
then $e$ is removable in~$G$ as well; for this, it suffices to show a
perfect matching~$M$ which contains $\alpha$~and~$\beta$ but does
not contain~$e$.

\smallskip
Let $M_1$ be any perfect matching of~$G-\{a_2,v\}$;
note that $\beta \in M_1$. A simple counting argument shows
that $\beta'$ lies in~$M_1$ as well. Now, let $M_2$ be a perfect matching
of \mbox{$K_1- \{a_1,u_1,b_1,w_1\}$};
such a perfect matching exists due to
Proposition~\ref{prop:bracelike-property-of-Rconfigurations}.
Observe that
$M:=(M_1 - E(K_1)) \cup M_2 \cup \{\alpha,\alpha'\}$ is the desired
perfect matching alluded to above.
As discussed, this completes the proof.
\end{proof}

In the previous two lemmas, we have shown that under certain circumstances
there exists an \Rcomp\ edge which is not in~$K_1$. However, in the proof of
the Strictly \Rthin\ Edge Theorem (\ref{thm:strictly-Rthin-nb-bricks}), we will
be interested in finding an \Rthin\ edge which is not in~$K_1$. To do so,
we will choose an \Rcomp\ edge appropriately, and use
Theorem~\ref{thm:rank-plus-index}, in conjunction with the following lemma,
to argue that the chosen edge is indeed \Rthin.

\begin{lem}
\label{lem:structure-of-outside-Rcompatible-edge}
Suppose that $u_1,w_1 \notin V(R)$.
Let $e$ denote an \Rcomp\ edge which does not lie in $E(K_1)$,
let $S$ denote a nontrivial barrier of~$G-e$,
and $I$ the set of isolated vertices of~$(G-e)-S$.
Then the following statements hold:
\begin{enumerate}[(i)]
\item $S \cap V(K_1)$ contains at most one vertex, and
\item $I \cap V(K_1)$ is empty.
\end{enumerate}
\end{lem}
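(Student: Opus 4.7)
The strategy is to prove (i) first, and then obtain (ii) from (i) by a short degree analysis.

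I would derive (ii) from (i) as follows. Assume without loss of generality that $S\subseteq B$, so $I\subseteq A$, and suppose $v\in I\cap V(K_1)$. Since $e\notin E(K_1)$ and $e\neq\alpha,\beta$, the edge $e$ cannot be incident with any vertex of $V(K_1)\setminus\{u_1,w_1\}$. Combined with the definition of an \Rconf, this implies that each non-corner vertex of $V(K_1)\cap A$ is cubic in $G$ with all three neighbours in $V(K_1)\cap B$, and that $a_1$ has $a_2\in A$ among its $(G-e)$-neighbours. Three subcases then arise: if $v$ is a non-corner vertex of $V(K_1)\cap A$, its three neighbours would all lie in $S\cap V(K_1)$, forcing $|S\cap V(K_1)|\geq 3$; the case $v=a_1$ is impossible because $a_2\in A\setminus S$ is a neighbour of $a_1$ in $G-e$; and the case $v=w_1$ forces the at least two $K_1$-neighbours of $w_1$, which lie in $B\cap V(K_1)$, into $S\cap V(K_1)$, giving $|S\cap V(K_1)|\geq 2$. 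Each subcase contradicts (i), so (ii) will follow.

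For (i), I would assume WLOG $S\subseteq B$ and suppose for contradiction that there are distinct $v_1,v_2\in S\cap V(K_1)\cap B$. Setting $X:=S\setminus\{v_1,v_2\}$ in Tutte's theorem shows that $(G-e)\setminus\{v_1,v_2\}$ has no perfect matching; my plan is to derive a contradiction by constructing such a matching. The main ingredients would be a perfect matching $M_K$ of $K_1-\{a_1,w_1,v_1,v_2\}$, obtained from the bracelike property (Proposition~\ref{prop:bracelike-property-of-Rconfigurations}(ii)); a perfect matching $M_{\mathrm{out}}$ of $G-V(K_1)-V(R)$, obtained from the conformality of $E(K_1)\cup R$ (Lemma~\ref{lem:conformality-of-Rconfigurations}(iii)); the edge $\alpha$, which is forced to cover $a_1$ since $a_2$ is its only neighbour outside $V(K_1)$; and an edge $w_1v^*$ with $v^*\in B\setminus V(K_1)$, whose existence follows from the $3$-connectedness of the brick $G$ (otherwise $\{a_1,b_1\}$ would be a $2$-vertex-cut).

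The hard part will be to coordinate these ingredients so that every vertex except $v_1,v_2$ is covered and the edge $e$ is avoided. When $b_1\in\{v_1,v_2\}$, $M_K$ omits $b_1$ and I may also include $\beta$; a small alternating-path exchange along $M_{\mathrm{out}}$ will ensure that the chosen $v^*$ is left unmatched so it can be paired with $w_1$. When $b_1\notin\{v_1,v_2\}$, however, $M_K$ covers $b_1$ and $\beta$ cannot be used without conflict; instead, one must find a perfect matching of $G-V(K_1)-\{a_2\}$ that avoids $\beta$, covers $b_2$ by a different edge into $A\setminus V(K_1)$, and leaves $v^*$ unmatched. Establishing the existence of this matching is the most delicate step, and I expect it to rely on the bicriticality of $G$ together with Hall-style counting in the bipartite graph $H-V(K_1)$, much as in the proof of Lemma~\ref{lem:find-Rcompatible-when-free-corners-low-degree}. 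In both cases the resulting matching is composed of $M_K$, the outside matching, elements of $R$, and the edge $w_1v^*$; since $e\notin E(K_1)\cup R$ and $v^*$ can be chosen so that $w_1v^*\neq e$, the matching avoids $e$, yielding the desired contradiction with the assumption that $\{v_1,v_2\}\subseteq S$.
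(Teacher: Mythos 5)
Your derivation of (ii) from (i) is sound, and your reduction of (i) to exhibiting a perfect matching of $(G-e)-\{v_1,v_2\}$ for any two distinct $v_1,v_2\in V(K_1)\cap B$ is exactly the right move (it matches the paper up to swapping the roles of $A$ and $B$). The gap is in the construction of that matching, which you yourself flag as ``the most delicate step'' and do not carry out; moreover the partial details you give cannot be repaired as stated. Since $v_1,v_2\in B$ and the only edge of $G$ with both ends in $B$ is $\beta$, counting the two colour classes shows that any perfect matching of $(G-e)-\{v_1,v_2\}$ must contain $\alpha$ and must \emph{not} contain $\beta$; so in your subcase $b_1\in\{v_1,v_2\}$ the instruction to ``include $\beta$'' is doubly impossible ($\beta$ is incident with the deleted vertex $b_1$, and $\beta$ is excluded by parity). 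In both subcases you are then left having to cover $w_1$ and $b_2$ by an alternating-path exchange through $M_{\mathrm{out}}$, and the existence of such a path is precisely what remains unproved. Even the existence of a neighbour $v^*$ of $w_1$ in $B\setminus V(K_1)$ does not follow from the $2$-cut argument you cite: a priori the third edge at $w_1$ could go to a vertex of $B\cap V(K_1)$ (e.g.\ to $u_1$), which keeps $G$ $3$-connected but gives you no edge leaving $V(K_1)$ at $w_1$.

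The idea that closes this gap is to choose the starting matching so that no patching is needed. Take a perfect matching $M_1$ of $(H-e)-\{v_1,a_2\}$, which exists because $H-e$ is matching covered (Proposition~\ref{prop:characterizations-of-bipmcg}). The only vertices of $K_1$ having $H$-neighbours outside $V(K_1)$ are $u_1$ and $w_1$ (every non-corner vertex is cubic with all edges in $K_1$, and $a_1,b_1$ have $H$-degree two). Since deleting $v_1$ leaves one more $A$-vertex than $B$-vertex inside $K_1$, a count shows that exactly one edge of $M_1$ crosses $\partial(V(K_1))$ and that it is incident with $w_1$. Hence $M_1-E(K_1)$ covers $w_1$ together with every vertex outside $V(K_1)$ except $a_2$. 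Now discard $M_1\cap E(K_1)$, replace it by a perfect matching $M_2$ of $K_1-\{a_1,w_1,v_1,v_2\}$ (Proposition~\ref{prop:bracelike-property-of-Rconfigurations}), and add $\alpha$ to cover $a_1$ and $a_2$; the result is a perfect matching of $(G-e)-\{v_1,v_2\}$ avoiding $e$. This uses only the two auxiliary facts you already identified, but it dispenses entirely with $M_{\mathrm{out}}$, the edge $w_1v^*$, and the alternating-path exchange.
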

\begin{proof}
Since $e$ is \Rcomp, $S$ is a subset of one of the two
color classes of~$H$; assume without loss of generality that $S \subset A$.
To establish {\it (i)}, we will show that if $v_1$~and~$v_2$ are any
two distinct vertices in~$V(K_1) \cap A$, then $(G-e)-\{v_1,v_2\}$
has a perfect matching~$M$.

\smallskip
Let $M_1$ be a perfect matching of~$(H-e)-\{v_1,b_2\}$ where $b_2$
is the end of~$\beta$ which is not in~$V(K_1)$;
such a perfect matching exists by
\cite[Proposition~2.1]{koth19} as $H-e$ is matching covered.
A simple counting argument shows that
$M \cap \partial(V(K_1))$ contains only one edge, and this edge
is incident with the free corner~$u_1$.
Let $M_2$ be a perfect matching of~$K_1 - \{b_1,u_1,v_1,v_2\}$;
such a perfect matching exists due to
Proposition~\ref{prop:bracelike-property-of-Rconfigurations}.
Observe that $M:= (M_1 - E(K_1)) + M_2 + \beta$
is the desired perfect matching of~$(G-e)-\{v_1,v_2\}$, and
this proves {\it (i)}.

\smallskip
We now deduce {\it (ii)} from {\it (i)}.
Suppose to the contrary that $I \cap V(K_1)$ is nonempty, and let $x$
denote any of its members. Observe that $x$ is adjacent with at least two vertices
in~$V(K_1)$, and each of these must lie in~$S$; this contradicts {\it (i)},
and completes the proof.
\end{proof}

\subsection{Proof of Proposition~\ref{prop:Rconfigurations-almost-disjoint}}
\label{sec:proof-Rconfigurations-almost-disjoint}

As in the statement of the proposition, let $G$ be a simple \Rbrick,
and let $K_1$ be an \Rconf\ with \external\ $a_1u_1$~and~$b_1w_1$,
where $u_1$~and~$w_1$ denote the free corners of~$K_1$;
see Figure~\ref{fig:first-Rconfiguration}.
Suppose that $G$ has an \Rconf~$K_2$ which is distinct from~$K_1$;
that is, $K_1$ and $K_2$ are not identical subgraphs of~$G$.
We assume that $K_1$~and~$K_2$ are not vertex-disjoint.
Our goal
is to deduce that $u_1$~and~$w_1$ are the free corners of~$K_2$,
and that $K_2$ is otherwise vertex-disjoint with~$K_1$.

\smallskip
We first argue that $u_1, w_1 \notin V(R)$.
Note that every vertex of~$K_1$, except possibly $u_1$~and~$w_1$,
is cubic in~$G$.
Consequently, if $u_1,w_1 \in V(R)$ then $V(G) = V(K_1)$, since otherwise
$\{u_1,w_1\}$ is a $2$-vertex cut of~$G$; furthermore, either $G$
is precisely the graph induced by $E(K_1) \cup R$, or otherwise, $G$ has one
additional edge joining $u_1$~and~$w_1$; in either case, it is easily seen
that $K_1$ is the only subgraph with all the properties of an \Rconf; this
contradicts the hypothesis.
By Lemma~\ref{lem:conformality-of-Rconfigurations}{\it (i)},
$u_1,w_1 \notin V(R)$.

\begin{Claim}
\label{claim:not-a-free-corner}
Let $z_1$ be any vertex of~$K_1$ which is distinct from $u_1$~and~$w_1$.
If $z_1 \in V(K_2)$ then every edge of~$K_1$ which is incident with~$z_1$
lies in~$E(K_2)$.
\end{Claim}
\begin{proof}
Assume that $z_1 \in V(K_2)$.
First consider the case in which $z_1 \in \{a_1,b_1\}$.
Note that the degree of~$z_1$ in~$H$ is two; consequently, both edges of~$H$
incident with~$z_1$ lie in~$E(K_2)$.

\smallskip
Now consider the case in which $z_1 \notin \{a_1,b_1\}$.
Note that $z_1$ is cubic.
Observe that, for an \Rconf~$K$,
any vertex of~$K$, which is not one of its corners,
is cubic in~$K$ as well as in~$G$.
Thus, it suffices to show that $z_1$ is not a corner of~$K_2$.

\smallskip
Suppose instead that $z_1$ is a corner of~$K_2$.
As $z_1 \notin V(R)$, it is a free corner.
Since $z_1$ is cubic, $K_2$ is an \Rladder.
Also, $z_1$ must be adjacent with a corner of~$K_2$ which lies in~$V(R)$;
such a corner is either $a_1$~or~$b_1$.
Adjust notation so that $z_1$ is adjacent with~$a_1$; thus,
both edges of~$H$ incident with~$a_1$ lie in~$E(K_2)$.
Note that $a_1z_1$ is an external rung of~$K_2$.
Also, since $u_1$ is not a corner of~$K_2$, it is cubic in~$K_2$
and in~$G$.
We infer that $K_1$ is also an \Rladder;
see Figure~\ref{fig:not-a-free-corner}.

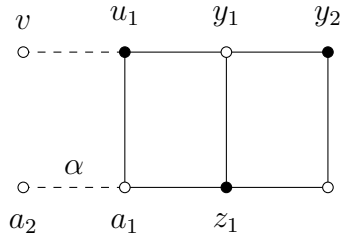
\begin{figure}[!ht]
%\smallskip
\centering
\begin{tikzpicture}[scale=0.9]
\draw[dashed] (1,2) -- (-0.5,2);
\draw[dashed] (1,0) -- (-0.5,0);
\draw (0.25,0.3)node[nodelabel]{$\alpha$};
\draw (-0.5,0)node{}node[below,nodelabel]{$a_2$};
\draw (-0.5,2)node{}node[above,nodelabel]{$v$};

\draw (1,0) -- (1,2);
\draw (2.5,0) -- (2.5,2);
\draw (4,0) -- (4,2);

\draw (1,0) -- (4,0);
\draw (1,2) -- (4,2);

\draw (1,0) node{}node[below,nodelabel]{$a_1$};
\draw (1,2) node[fill=black]{}node[above,nodelabel]{$u_1$};
\draw (2.5,2) node{};
\draw (2.5,0) node[fill=black]{};
\draw (4,0) node{};
\draw (4,2) node[fill=black]{};

\draw (2.5,0)node[below,nodelabel]{$z_1$};
\draw (2.5,2)node[above,nodelabel]{$y_1$};
\draw (4,2)node[above,nodelabel]{$y_2$};
\end{tikzpicture}
\caption{Illustration for Claim~\ref{claim:not-a-free-corner};
the solid lines show part of the \Rconf~$K_1$}
\label{fig:not-a-free-corner}
\bigskip
\end{figure}

Let $y_1$ denote the neighbour of~$u_1$ in~$K_1$ which is distinct
from~$a_1$, and let $v$ denote the third neighbour of~$u_1$.
Note that $y_1, v \in V(K_2)$.
Since $|\partial(u_1)-E(K_1)| =1$,
Lemma~\ref{lem:find-Rcompatible-when-free-corners-low-degree}{\it (i)}
implies that $v$ is distinct from~$w_1$.
Since $K_2$ is a ladder, $a_1z_1$ lies in a $4$-cycle of~$K_2$;
this implies that $y_1z_1 \in E(K_2)$.
Note that $u_1y_1$ is an internal rung of~$K_2$.

\smallskip
Let $y_2$ denote the neighbour of~$y_1$ which
is distinct from $u_1$~and~$z_1$. Note that \mbox{$y_2 \in V(K_2)$}.
Since $a_1z_1$ and $u_1y_1$ are rungs of~$K_2$,
it must be the case that $v$ and $y_2$ are adjacent and the
edge joining them is a rung of~$K_2$;
however, it is easily seen that $v$ and $y_2$ are nonadjacent.
We thus have a contradiction.
This completes the proof
of Claim~\ref{claim:not-a-free-corner}.
\end{proof}

We will now use Claim~\ref{claim:not-a-free-corner} to deduce that,
since $K_1$~and~$K_2$ are distinct {\Rconf}s, the only vertices of~$K_1$
which may lie in~$K_2$ are its free corners (that is, $u_1$~and~$w_1$).

\smallskip
Suppose instead that
\mbox{$(V(K_1)- \{u_1,w_1\}) \cap V(K_2)$} is nonempty.
Since \mbox{$K_1 - \{u_1,w_1\}$} is connected,
Claim~\ref{claim:not-a-free-corner} implies that
\mbox{$V(K_1) \subseteq V(K_2)$} and {$E(K_1) \subseteq E(K_2)$}.
Furthermore, since \mbox{$|V(K_1) \cap V(K_2)| \geq 6$}, 
the set {$(V(K_2) - \{u_2,w_2\}) \cap V(K_1)$} is also nonempty, where
$u_2$~and~$w_2$ denote the free corners of~$K_2$.
By symmetry, \mbox{$V(K_2) \subseteq V(K_1)$}
and \mbox{$E(K_2) \subseteq E(K_1)$}.
We conclude that $K_1$~and~$K_2$ are identical subgraphs of~$G$;
contrary to our hypothesis.

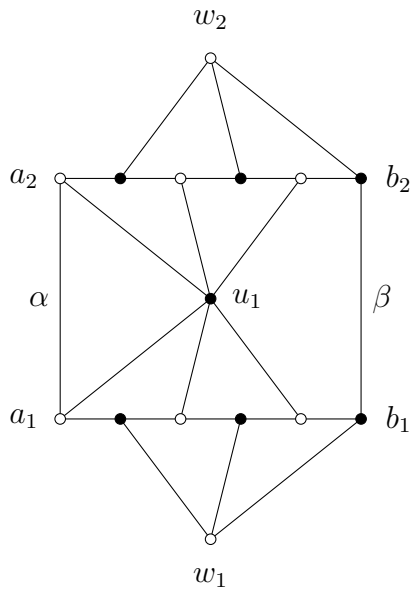
\begin{figure}[!ht]
\centering
%\vspace*{-0.1in}
%\smallskip
\begin{tikzpicture}[scale=0.8]
\draw (1,4) -- (1,0);
\draw (6,4) -- (6,0);
\draw (0.65,2)node[nodelabel]{$\alpha$};
\draw (6.35,2)node[nodelabel]{$\beta$};

\draw (1,4) -- (6,4);

\draw (1,4) -- (3.5,2);
\draw (2,4) -- (3.5,6);
\draw (3,4) -- (3.5,2);
\draw (4,4) -- (3.5,6);
\draw (5,4) -- (3.5,2);
\draw (6,4) -- (3.5,6);

\draw (1,4) node{}node[left,nodelabel]{$a_2$};
\draw (2,4) node[fill=black]{};
\draw (3,4) node{};
\draw (4,4) node[fill=black]{};
\draw (5,4) node{};
\draw (6,4) node[fill=black]{}node[right,nodelabel]{$b_2$};
\draw (3.5,6) node{}node[above,nodelabel]{$w_2$};

\draw (1,0) -- (6,0);

\draw (1,0) -- (3.5,2);
\draw (2,0) -- (3.5,-2);
\draw (3,0) -- (3.5,2);
\draw (4,0) -- (3.5,-2);
\draw (5,0) -- (3.5,2);
\draw (6,0) -- (3.5,-2);

\draw (1,0) node{}node[left,nodelabel]{$a_1$};
\draw (2,0) node[fill=black]{};
\draw (3,0) node{};
\draw (4,0) node[fill=black]{};
\draw (5,0) node{};
\draw (6,0) node[fill=black]{}node[right,nodelabel]{$b_1$};
\draw (3.5,2) node[fill=black]{}node[right,nodelabel]{$u_1$};
\draw (3.5,-2) node{}node[below,nodelabel]{$w_1$};
\end{tikzpicture}
\vspace*{-0.1in}
\caption{When $K_1$~and~$K_2$ share only one free corner}
\label{fig:K1-K2-share-only-one-free-corner}
\bigskip
\bigskip
\end{figure}

%\smallskip
Thus, each member of~$V(K_1) \cap V(K_2)$
is a free corner of~$K_1$, and by symmetry,
it is a free corner of~$K_2$ as well.
By our hypothesis, $V(K_1) \cap V(K_2)$ is nonempty;
thus, at least one of $u_1$~and~$w_1$ is a free corner of~$K_2$.
Adjust notation so that $u_1$ is a free corner of~$K_2$.
To complete the proof,
we will show that $w_1$ is also a free corner of~$K_2$.

\smallskip
Suppose not, that is, say $V(K_1) \cap V(K_2) = \{u_1\}$,
and let $w_2$ denote the free corner of~$K_2$ distinct from~$u_1$.
Observe that the ends~$a_2$ of $\alpha$ and $b_2$ of $\beta$ both
lie in~$V(K_2)$; see Figure~\ref{fig:K1-K2-share-only-one-free-corner}.
Furthermore, $|B - V(K_1 \cup K_2)| = |A - V(K_1 \cup K_2)| + 1$.
We shall let \mbox{$T:=B -V(K_1 \cup K_2)$}.
Since every vertex of~$K_1 \cup K_2$, except possibly $u_1,w_1$~and~$w_2$,
is cubic, all neighbours of~$T$ lie in the set
$S:= (A - V(K_1 \cup K_2)) \cup \{w_1,w_2\}$. Consequently,
$S$ is a nontrivial barrier of~$G$; this is absurd.

\smallskip
Thus, $u_1$~and~$w_1$ are the free corners of~$K_2$,
and $K_2$ is otherwise vertex-disjoint with~$K_1$.
This completes the proof of
Proposition~\ref{prop:Rconfigurations-almost-disjoint}. \qed

\section{Strictly \Rthin\ Edge Theorem}
\label{sec:proof-of-strictly-Rthin-edge-theorem}

As in the statement of the theorem (\ref{thm:strictly-Rthin-nb-bricks}),
let $G$ be a simple \Rbrick\ which is free of \sRthin\ edges. Our goal
is to show that $G$ is a member of one of the eleven infinite
families which appear in the statement of the theorem,
that is, to show that $G \in \mathcal{N}$.
We adopt Notation~\ref{Not:Rbrick-doubleton}.

\smallskip
We may assume that $G$ is different from $K_4$~and~$\overline{C_6}$,
and thus, by the \Rthin\ Edge Theorem (\ref{thm:Rthin-nb-bricks}),
$G$ has an \Rthin\ edge, say~$e_1$. Depending on the index of~$e_1$,
we invoke either the \mbox{$R$-biwheel} Theorem~(\ref{thm:Rbiwheel-configuration})
or the \mbox{$R$-ladder} Theorem~(\ref{thm:Rladder-configuration}) to deduce
that $G$ has an \Rconf, say~$K_1$, such that $e_1 \in E(K_1)$.
We shall let $a_1u_1$~and~$b_1w_1$ denote the
\external\ of~$K_1$,
and adjust notation so that $u_1$~and~$w_1$ are its free corners.
See Notation~\ref{Not:biwheel-ladder-convention}
and Figure~\ref{fig:first-Rconfiguration}.
Note that $a_1$ is an end of~$\alpha$ and
$b_1$ is an end of~$\beta$.

\smallskip
By Lemma~\ref{lem:conformality-of-Rconfigurations},
either both free corners $u_1$~and~$w_1$ lie in~$V(R)$,
or otherwise, neither of them lies in~$V(R)$; let us first deal
with the former case.

\begin{Claim}
%{\sc [Prisms, M{\"o}bius Ladders and Truncated Biwheels]}
\label{claim:prisms-Mobius-ladders-truncated-biwheels}
If $u_1,w_1 \in V(R)$ then $G$ is either a prism, or a M{\"o}bius ladder or a
truncated biwheel.
\end{Claim}
\begin{proof}
Suppose that $u_1,w_1 \in V(R)$; that is, $\alpha = a_1w_1$
and $\beta = b_1u_1$. Since every vertex of~$K_1$ is cubic in~$G$,
except possibly $u_1$~and~$w_1$, we conclude that $V(G) = V(K_1)$
as otherwise $\{u_1,w_1\}$ is a $2$-vertex cut of~$G$. Furthermore,
either $G$ is precisely the graph induced by $E(K_1) \cup R$,
or otherwise, $G$
has one additional edge joining $u_1$~and~$w_1$. In the latter case,
$u_1w_1$ is a \sRthin\ edge, contrary to the hypothesis.
In the former case, observe that: if $K_1$ is an \Rbiwheel, as shown in
Figure~\ref{fig:first-Rconfiguration}a, then $G$ is
a truncated biwheel; if $K_1$ is an \Rladder\ of odd parity, as shown
in Figure~\ref{fig:first-Rconfiguration}b, then $G$ is a prism;
and if $K_1$ is an \Rladder\ of even parity, as shown
in Figure~\ref{fig:first-Rconfiguration}c, then $G$ is a M{\"o}bius ladder.
\end{proof}

We may thus assume that neither $u_1$ nor $w_1$ lies in~$V(R)$.
Consequently, the end~$a_2$ of~$\alpha$ and the end~$b_2$ of~$\beta$
are both in~$V(G)-V(K_1)$.

\begin{Claim}
%{\sc [Staircases and Pseudo-biwheels]}
\label{claim:staircases-pseudo-biwheels}
Either $G$ is a staircase or a pseudo-biwheel, or otherwise, $G$
has an \Rcomp\ edge which is not in~$E(K_1)$.
\end{Claim}
\begin{proof}
We begin by noting that, if $|\partial(u_1) - E(K_1)| \geq 2$,
then by
Lemma~\ref{lem:find-Rcompatible-edge-at-high-degree-free-corner},
some edge of~$\partial(u_1)-E(K_1)$
is \Rcomp, and we are done; an analogous argument applies
when $|\partial(w_1) - E(K_1)| \geq 2$.

\smallskip
Now suppose that $|\partial(u_1)-E(K_1)| \leq 1$ and
that $|\partial(w_1)-E(K_1)| \leq 1$.
By
Lemma~\ref{lem:find-Rcompatible-when-free-corners-low-degree}
{\it (i)}~and~{\it (ii)}, $u_1$ and $w_1$ are nonadjacent;
furthermore, $\partial(u_1)-E(K_1)$ has a single element, say~$\alpha'$;
likewise, $\partial(w_1)-E(K_1)$ has a single element, say~$\beta'$;
see Figure~\ref{fig:only-one-edge-at-each-free-corner}. We let $R':=\{\alpha',\beta'\}$.
By {\it (iii)} of the same lemma, $\alpha$~and~$\alpha'$ are adjacent
if and only if $\beta$~and~$\beta'$ are adjacent.

\smallskip
First consider the case in which $\alpha$~and~$\alpha'$ are nonadjacent,
and as in the statement of
Lemma~\ref{lem:find-Rcompatible-when-free-corners-low-degree}{\it (iv)},
let $v$ denote the end of~$\alpha'$ which is distinct from~$u_1$;
note that $v \notin V(K_1)$.
By the lemma,
$\partial(v)-\alpha'$ contains an \Rcomp\ edge, and we are done.

\smallskip
Now suppose that $\alpha$~and~$\alpha'$ are adjacent;
whence $\beta$~and~$\beta'$ are also adjacent.
Note that $\alpha' = u_1a_2$ and $\beta' = w_1b_2$.
Every vertex of~$K_1$, except possibly $u_1$~and~$w_1$, is cubic
in~$G$; furthermore, $\partial(u_1)-E(K_1) = \{\alpha'\}$, and likewise,
$\partial(w_1)-E(K_1) = \{\beta'\}$.
We infer that $V(G) = V(K_1) \cup \{a_2,b_2\}$ as otherwise
$\{a_2,b_2\}$ is a $2$-vertex cut of~$G$. Furthermore,
since each of $a_2$~and~$b_2$
has degree at least three, there is an edge joining them;
and $G$ is precisely the graph
induced by~$E(K_1) \cup R \cup R' \cup \{a_2b_2\}$.
Observe that if $K_1$ is an \Rbiwheel\ of
order at least eight then $G$ is a pseudo-biwheel,
and otherwise, $G$ is a staircase.
\end{proof}

We may thus assume that $G$ has an \Rcomp\ edge which is not in~$E(K_1)$.
We will now use Theorem~\ref{thm:rank-plus-index}
and Lemma~\ref{lem:structure-of-outside-Rcompatible-edge}
to deduce that $G$ has an \Rthin\ edge which is not in~$E(K_1)$.

\begin{Claim}
\label{claim:second-Rthin-edge}
$G$ has an \Rthin\ edge, say~$e_2$, which is not in~$E(K_1)$.
\end{Claim}
\begin{proof}
Among all \Rcomp\ edges which are not in~$E(K_1)$, we choose one, say~$e_2$,
such that ${\sf rank}(e_2) + {\sf index}(e_2)$ is maximum; we intend to show that
$e_2$ is \Rthin. Suppose not; then, by Theorem~\ref{thm:rank-plus-index},
with~$e_2$ playing the role of~$e$,
there exists another \Rcomp\ edge~$f$ such that
(i) $f$ has an end each of whose neighbours in~$G-e_2$ lies in a
(nontrivial) barrier~$S$
of~$G-e_2$, and (ii) ${\sf rank}(f) + {\sf index}(f) > {\sf rank}(e_2) + {\sf index}(e_2)$.

\smallskip
Let $I$ denote the set of isolated vertices of~$(G-e_2)-S$. Condition (i) above
implies that $f$ has one end in~$I$ and another end in~$S$.
By Lemma~\ref{lem:structure-of-outside-Rcompatible-edge},
with $e_2$ playing the role of~$e$,
the set $I \cap V(K_1)$ is empty. Since $f$ has one end in~$I$,
we infer that $f$ is not in~$E(K_1)$; this, combined with condition (ii) above,
contradicts our choice of~$e_2$.
We thus conclude that $e_2$ is \Rthin.
\end{proof}

\smallskip
Now, depending on the index of~$e_2$, we invoke either the
$R$-biwheel Theorem~(\ref{thm:Rbiwheel-configuration})
or the $R$-ladder Theorem~(\ref{thm:Rladder-configuration})
to deduce that $G$ has an \Rconf, say~$K_2$, such that $e_2 \in E(K_2)$.
As $e_2$ is not in~$E(K_1)$ but it is in~$E(K_2)$, the {\Rconf}s
$K_1$~and~$K_2$ are clearly distinct.
By Proposition~\ref{prop:Rconfigurations-almost-disjoint}:
either $K_1$~and~$K_2$ are vertex-disjoint; or otherwise,
$u_1$~and~$w_1$ are the free corners of~$K_2$, and
$K_2$ is otherwise vertex-disjoint with~$K_1$.
In either case, the end~$a_2$ of~$\alpha$ and the end~$b_2$ of~$\beta$
are the two corners of~$K_2$ which are distinct from its free corners.
Let us first deal with the case in which $K_1$~and~$K_2$ are not vertex-disjoint;
Figure~\ref{fig:Rconfigurations-not-disjoint} shows an example in which $K_1$~and~$K_2$
are both {\Rbiwheel}s.%; the vertices with the same labels are to be identified.

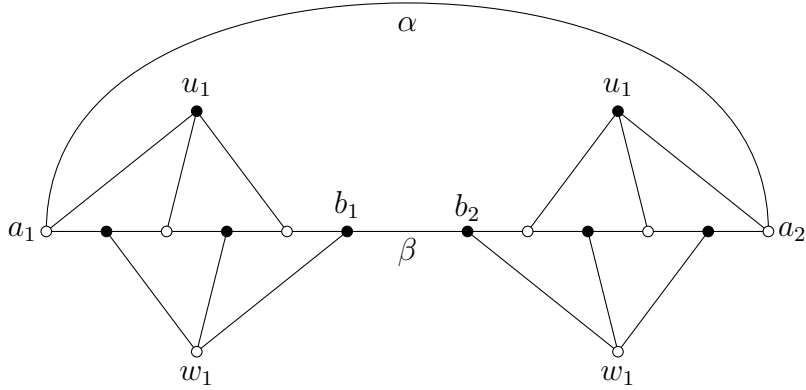
\begin{figure}[!ht]
\centering
\begin{tikzpicture}[scale=0.8]

\draw (1,0) to [out=90,in=180] (7,3.8) to [out=0,in=90] (13,0);
\draw (7,3.45)node[nodelabel]{$\alpha$};
%\draw (3.5,2) -- (3.5,3.2);
%\draw (3.5,-2) -- (3.5,-3.2);

%\draw (3.15,2.6)node[nodelabel]{$\alpha'$};
%\draw (3.85,-2.6)node[nodelabel]{$\beta'$};

%\draw (1,0) -- (-0.5,0);
%\draw (0.25,0.3)node[nodelabel]{$\alpha$};
%\draw (-0.5,0)node{}node[below,nodelabel]{$a_2$};

\draw (6,0) -- (8,0);
\draw (7,-0.35)node[nodelabel]{$\beta$};
%\draw (7.5,0)node[fill=black]{}node[above,nodelabel]{$b_2$};

\draw (1,0) -- (6,0);

\draw (1,0) -- (3.5,2);
\draw (2,0) -- (3.5,-2);
\draw (3,0) -- (3.5,2);
\draw (4,0) -- (3.5,-2);
\draw (5,0) -- (3.5,2);
\draw (6,0) -- (3.5,-2);

%\draw (1,0) node{}node[left,nodelabel]{$a_1 \in V(R)$};
\draw (1,0) node{};
\draw (0.6,0) node[nodelabel]{$a_1$};
\draw (2,0) node[fill=black]{};
\draw (3,0) node{};
\draw (4,0) node[fill=black]{};
\draw (5,0) node{};
\draw (6,0) node[fill=black]{};
\draw (6,0.45) node[nodelabel]{$b_1$};
%\draw (6,0) node[fill=black]{}node[right,nodelabel]{$b_1 \in V(R)$};
\draw (3.5,2) node[fill=black]{};
\draw (3.5,2.4) node[nodelabel]{$u_1$};
\draw (10.5,2) node[fill=black]{};
\draw (10.5,2.4) node[nodelabel]{$u_1$};
%\draw (3.5,2) node[fill=black]{};
%\draw (3.8,2.3) node[nodelabel]{$u_1$};
%\draw (3.5,-2) node{};
%\draw (3.2,-2.3) node[nodelabel]{$w_1$};
\draw (3.5,-2) node{};
\draw (3.5,-2.4) node[nodelabel]{$w_1$};
\draw (10.5,-2.4) node[nodelabel]{$w_1$};

\draw (8,0) -- (13,0);

\draw (8,0) -- (10.5,-2);
\draw (9,0) -- (10.5,2);
\draw (10,0) -- (10.5,-2);
\draw (11,0) -- (10.5,2);
\draw (12,0) -- (10.5,-2);
\draw (13,0) -- (10.5,2);

\draw (10.5,-2)node{};
\draw (8,0) node[fill=black]{};
\draw (8,0.4) node[nodelabel]{$b_2$};
\draw (9,0) node{};
\draw (10,0) node[fill=black]{};
\draw (11,0) node{};
\draw (12,0) node[fill=black]{};
\draw (13,0) node{};
\draw (13.4,0) node[nodelabel]{$a_2$};

\end{tikzpicture}
\vspace*{-0.1in}
\caption{When the two {\Rconf}s are not disjoint; the vertices with the same labels are to be identified}
\label{fig:Rconfigurations-not-disjoint}
\bigskip
\end{figure}

The proof of the following claim closely resembles
that of Claim~\ref{claim:prisms-Mobius-ladders-truncated-biwheels}.

\begin{Claim}
\label{claim:type-I-families}
If $K_1$~and~$K_2$ are not vertex-disjoint
then $G$ is either a double biwheel or a double ladder or a laddered biwheel,
each of type~I.
\end{Claim}
\begin{proof}
As noted above, $u_1$~and~$w_1$ are the free corners of~$K_2$,
and $K_2$ is otherwise vertex-disjoint with~$K_1$.
Consequently, the \external\ of~$K_2$ are $a_2u_1$~and~$b_2w_1$;
see Figure~\ref{fig:Rconfigurations-not-disjoint}. Since every vertex of~$K_1 \cup K_2$
is cubic in~$G$, except $u_1$~and~$w_1$, we infer that
$V(G) = V(K_1) \cup V(K_2)$, as otherwise $\{u_1,w_1\}$ is a
$2$-vertex cut of~$G$. Furthermore, either $G$ is precisely the
graph induced by~$E(K_1 \cup K_2) \cup R$, or otherwise,
$G$ has one additional edge joining $u_1$~and~$w_1$.
In the latter case, $u_1w_1$ is a \sRthin\ edge, contrary to the hypothesis.
In the former case, observe that
if $K_1$~and~$K_2$ are both {\Rbiwheel}s then $G$ is a double biwheel
of type~I; likewise, if $K_1$~and~$K_2$ are both {\Rladder}s then $G$
is a double ladder of type~I; finally, if one of $K_1$~and~$K_2$ is an
\Rladder\ and the other one is an \Rbiwheel\ then $G$ is a laddered biwheel
of type~I.
\end{proof}

We may thus assume that $K_1$ and $K_2$ are vertex-disjoint;
and we shall let $a_2u_2$ and $b_2w_2$ denote
the \external\ of~$K_2$; in particular, $u_2$~and~$w_2$
denote the free corners of~$K_2$. Figure~\ref{fig:Rconfigurations-disjoint}
shows an example in which $K_1$ is an \Rladder\ and $K_2$ is an \Rbiwheel.

\begin{figure}[!ht]
\centering
\bigskip
\begin{tikzpicture}[scale=1]

\draw (-0.4,0)node[nodelabel]{$a_1$};
\draw (-0.4,2)node[nodelabel]{$u_1$};
\draw (2,-0.3)node[nodelabel]{$w_1$};
\draw (2.2,2.22)node[nodelabel]{$b_1$};

\draw (6.5,3.1)node[nodelabel]{$w_2$};
\draw (6.5,-1.1)node[nodelabel]{$u_2$};
\draw (4,0.7)node[nodelabel]{$b_2$};
\draw (9,1.3)node[nodelabel]{$a_2$};

\draw (4.5,-2)node[above,nodelabel]{$\alpha$};
\draw (3.1,1.77)node[nodelabel]{$\beta$};
%\draw (3.5,2.7)node[nodelabel]{$\alpha'$};
%\draw (3.85,-0.6)node[nodelabel]{$\beta'$};

\draw (0,0) to [out=270,in=180] (4.5,-1.8) to [out=0,in=270] (9,1);
%\draw[dashed] (0,2) to [out=40,in=180] (6.5,2.8);

\draw (2,2) -- (4,1);
%\draw[dashed] (2,0) -- (6.5,-0.8);

\draw (0,0) -- (2,0)node{} -- (2,2)node[fill=black]{} -- (0,2)node[fill=black]{} -- (0,0)node{};
\draw (1,0)node[fill=black]{} -- (1,2)node{};

\draw (4,1) -- (9,1);

\draw (4,1)node[fill=black]{} -- (6.5,2.8);
\draw (5,1)node{} -- (6.5,-0.8);
\draw (6,1)node[fill=black]{} -- (6.5,2.8);
\draw (7,1)node{} -- (6.5,-0.8);
\draw (8,1)node[fill=black]{} -- (6.5,2.8)node{};
\draw (9,1)node{} -- (6.5,-0.8)node[fill=black]{};

\end{tikzpicture}
%\vspace*{-0.05in}
\caption{When the two {\Rconf}s are disjoint}
\label{fig:Rconfigurations-disjoint}
\bigskip
\end{figure}

We now find the remaining three families, or show the existence of an
\Rcomp\ edge which is not in~$E(K_1 \cup K_2)$; the proof is
similar to that of Claim~\ref{claim:staircases-pseudo-biwheels}.

\begin{Claim}
\label{claim:type-II-families}
Either $G$ is a double biwheel or a double ladder or a laddered biwheel,
each of type~II, or otherwise, $G$ has an \Rcomp\ edge which is not
in~$E(K_1 \cup K_2)$.
\end{Claim}
\begin{proof}
We begin by noting that, if $|\partial(u_1) - E(K_1)| \geq 2$,
then by Lemma~\ref{lem:find-Rcompatible-edge-at-high-degree-free-corner},
some edge of~$\partial(u_1)-E(K_1)$ is \Rcomp, and since
$u_1 \notin V(K_2)$, such an edge is not in~$E(K_2)$, and we are done;
an analogous argument applies when $|\partial(w_1) - E(K_1)| \geq 2$,
or when $|\partial(u_2)-E(K_2)| \geq 2$ or when
$|\partial(w_2)-E(K_2)| \geq 2$.

\smallskip
Now suppose that, for $i \in \{1,2\}$, $|\partial(u_i) - E(K_i)| \leq 1$
and $|\partial(w_i) - E(K_i)| \leq 1$;
by Lemma~\ref{lem:find-Rcompatible-when-free-corners-low-degree}
{\it (i)}~and~{\it (ii)},
$u_i$~and~$w_i$ are nonadjacent;
furthermore, each of these inequalities holds with equality.
Let $\alpha'$ denote the only member of~$\partial(u_1)-E(K_1)$,
and let $\beta'$ denote the only member of~$\partial(w_1)-E(K_1)$.

\smallskip
First consider the case in which either $w_2$ is not an end of~$\alpha'$
or $u_2$ is not an end of~$\beta'$. Assume without loss of generality
that $w_2$ is not an end of~$\alpha'$; thus the end of~$\alpha'$ distinct
from~$u_1$, say~$v$, is not in~$V(K_1 \cup K_2)$.
By Lemma~\ref{lem:find-Rcompatible-when-free-corners-low-degree}{\it (iv)},
$\partial(v)-\alpha'$ contains an \Rcomp\ edge;
such an edge is not in~$E(K_1 \cup K_2)$, and we are done.

\smallskip
Now suppose that $w_2$ is an end of~$\alpha'$ and $u_2$ is an end of~$\beta'$.
Note that every vertex of~$K_1 \cup K_2$, except possibly $u_1, w_1, u_2$
and $w_2$, is cubic in~$G$; furthermore,
$\partial(u_1)-E(K_1) = \partial(w_2)-E(K_2) = \{\alpha'\}$, and likewise,
$\partial(w_1)-E(K_1) = \partial(u_2)-E(K_2) = \{\beta'\}$.
We conclude that $V(G) = V(K_1 \cup K_2)$
and $E(G) = E(K_1 \cup K_2) \cup R \cup R'$.
Observe that: if $K_1$~and~$K_2$ are both {\Rbiwheel}s then
$G$ is a double biwheel of type~II; likewise, if $K_1$~and~$K_2$
are both {\Rladder}s then $G$ is a double ladder of type~II;
finally, if one of $K_1$~and~$K_2$ is an \Rladder\ and the other one
is an \Rbiwheel\ then $G$ is a laddered biwheel of type~II.
\end{proof}

We may thus assume that $G$ has an \Rcomp\ edge
which is not in~$E(K_1 \cup K_2)$.
We will now use Theorem~\ref{thm:rank-plus-index}
and Lemma~\ref{lem:structure-of-outside-Rcompatible-edge}
to deduce that $G$ has an \Rthin\ edge which is not in~$E(K_1 \cup K_2)$.
The proof is almost identical to that of Claim~\ref{claim:second-Rthin-edge},
except that now we have to deal with two {\Rconf}s instead of just one.

\begin{Claim}
\label{claim:third-Rthin-edge}
$G$ has an \Rthin\ edge, say~$e_3$, which is not in~$E(K_1 \cup K_2)$.
\end{Claim}
\begin{proof}
Among all \Rcomp\ edges which are not in~$E(K_1 \cup K_2)$,
we choose one, say~$e_3$,
such that ${\sf rank}(e_3) + {\sf index}(e_3)$ is maximum; we intend to show that
$e_3$ is \Rthin. Suppose not; then, by Theorem~\ref{thm:rank-plus-index},
with~$e_3$ playing the role of~$e$,
there exists another \Rcomp\ edge~$f$ such that
(i) $f$ has an end each of whose neighbours in~$G-e_3$ lies in a
(nontrivial) barrier~$S$
of~$G-e_3$, and (ii) ${\sf rank}(f) + {\sf index}(f) > {\sf rank}(e_3) + {\sf index}(e_3)$.

\smallskip
Let $I$ denote the set of isolated vertices of~$(G-e_3)-S$. Condition (i) above
implies that $f$ has one end in~$I$ and another end in~$S$.
By Lemma~\ref{lem:structure-of-outside-Rcompatible-edge},
with $e_3$ playing the role of~$e$,
the set $I \cap V(K_1)$ is empty;
likewise, the set~$I \cap V(K_2)$
is empty.
Since $f$ has one end in~$I$,
we infer that $f$ is not in~$E(K_1 \cup K_2)$;
this, combined with condition (ii) above,
contradicts our choice of~$e_3$.
We thus conclude that $e_3$ is \Rthin.
\end{proof}

Now, depending on the index of~$e_3$, we invoke either the
$R$-biwheel Theorem~(\ref{thm:Rbiwheel-configuration})
or the $R$-ladder Theorem~(\ref{thm:Rladder-configuration})
to deduce that $G$ has an \Rconf, say~$K_3$, such that $e_3 \in E(K_3)$.
As $e_3$ is not in~$E(K_1 \cup K_2)$ but it is in~$E(K_3)$,
the \Rconf~$K_3$ is distinct from each of $K_1$~and~$K_2$.
We have thus located three distinct {\Rconf}s in the brick~$G$;
namely, $K_1, K_2$~and~$K_3$.
However, this contradicts Corollary~\ref{cor:at-most-two-Rconfigurations},
and completes the proof of the Strictly \Rthin\ Edge Theorem
(\ref{thm:strictly-Rthin-nb-bricks}). \qed

\bigskip
\bigskip
\noindent
{\bf Acknowledgments}:
This work commenced in April 2015 when the first author was a Ph.D. candidate, and the second author
was visiting the University of Waterloo for a fortnight. We are thankful to Joseph Cheriyan who helped facilitate this research visit,
and participated in several of our discussions. We are indebted to both Cl{\'a}udio L. Lucchesi and U. S. R. Murty for their constant guidance and support,
and for sharing their invaluable insights.

%\newpage
\bibliographystyle{plain}
%\vspace*{-0.15in}
\bibliography{clm}

\begin{thebibliography}{10}

\bibitem{clm99}
M.~H. Carvalho, C.~L. Lucchesi, and U.~S.~R. Murty.
\newblock Ear decompositions of matching covered graphs.
\newblock {\em Combinatorica}, 19:151--174, 1999.

\bibitem{clm02b}
M.~H. Carvalho, C.~L. Lucchesi, and U.~S.~R. Murty.
\newblock Optimal ear decompositions of matching covered graphs.
\newblock {\em J.~Combin.~Theory Ser.~B}, 85:59--93, 2002.

\bibitem{clm06}
M.~H. Carvalho, C.~L. Lucchesi, and U.~S.~R. Murty.
\newblock How to build a brick.
\newblock {\em Discrete Math.}, 306:2383--2410, 2006.

\bibitem{clm08}
M.~H. Carvalho, C.~L. Lucchesi, and U.~S.~R. Murty.
\newblock Generating simple bricks and braces.
\newblock Technical Report IC-08-16, Institute of Computing, University of
  Campinas, July 2008.

\bibitem{elp82}
J.~Edmonds, L.~Lov\'asz, and William~R. Pulleyblank.
\newblock Brick decomposition and the matching rank of graphs.
\newblock {\em Combinatorica}, 2:247--274, 1982.

\bibitem{koth19}
N.~Kothari.
\newblock Generating near-bipartite bricks.
\newblock {\em J.~Graph Theory}, pages 565--590, 2019.

\bibitem{koth16}
Nishad Kothari.
\newblock {\em Brick Generation and Conformal Subgraphs}.
\newblock PhD thesis, University of Waterloo, 2016.

\bibitem{lova87}
L.~Lov\'asz.
\newblock Matching structure and the matching lattice.
\newblock {\em J.~Combin.~Theory Ser.~B}, 43:187--222, 1987.

\bibitem{mccu01}
W.~McCuaig.
\newblock Brace generation.
\newblock {\em J.~Graph Theory}, 38:124--169, 2001.

\bibitem{noth07}
S.~Norine and R.~Thomas.
\newblock Generating bricks.
\newblock {\em J.~Combin.~Theory Ser.~B}, 97:769--817, 2007.

\bibitem{seym80}
P.~D. Seymour.
\newblock Decomposition of regular matroids.
\newblock {\em J.~Combin.~Theory Ser.~B}, 28:305--359, 1980.

\end{thebibliography}

\end{document}